\begin{document}
\title{The super approximation property of $\mathrm{SL}_2(\mathbb{Z}/q\mathbb{Z}) \times \mathrm{SL}_2(\mathbb{Z}/q\mathbb{Z}) \times \mathrm{SL}_2(\mathbb{Z}/q\mathbb{Z})$}
\author{Chong Zhang}
\date{}
\maketitle

\begin{abstract}
Take $S \subset \mathrm{SL}_2(\mathbb{Z}) \times \mathrm{SL}_2(\mathbb{Z})\times \mathrm{SL}_2(\mathbb{Z})$ be finite symmetric and assume $S$ generates a group $G$ which is Zariski-dense in $\mathrm{SL}_2 \times \mathrm{SL}_2\times \mathrm{SL}_2(\mathbb{Z})$. This paper proves that the Cayley graphs
$$
\{\mathcal{C} a y(G(\bmod q), S(\bmod q))\}_{q \in \mathbb{Z}_{+}}
$$
form a family of expanders.
\end{abstract}
\newtheorem{Conjecture}{Conjecture}[equation]
\newtheorem{theorem}[equation]{Theorem}
\newtheorem{lemma}[equation]{Lemma}
\newtheorem{proposition}[equation]{Proposition}
\numberwithin{equation}{section}
\maketitle
\section{Introduction}
Consider a symmetric set $S$ which generate a subgroup $G=\langle S\rangle$ of $\mathrm{GL}_n(\mathbb{Z})$. We can define the super approximation property of the $G$. For any positive integer $q$, let $G_q=G(\bmod q)$ and $h_q$ be the Cheeger constant of the Cayley graph $\mathcal{C} a y\left(G_q, S(\bmod q)\right)$ given by
$$
h_q:=\min \left\{\frac{|\partial A|}{|A|}: A \subset G_q, 0<A \leq \frac{1}{2}\left|G_q\right|\right\},
$$
where $\partial A$ is the set of edges in $\mathcal{C} a y\left(G_q, S(\bmod q)\right)$ connecting one vertex in $A$ and one vertex in $G_q-A$. The super
approximation property of $G$ related to a set $\mathcal{A}$ of positive integers if there is $\epsilon>0$ such that $h_q>\epsilon, \forall q \in \mathcal{A}$. From previous work we know that this property is independent of the choice of the finite generating set $S$. We call $G$ is super approximation if $\mathcal{A}=\mathcal{Z}$.\\
We have already known that a plenty of lattices in $\mathrm{GL}_n(\mathbb{Z})$ satisfy the super approximation property due to the work of Margulis \cite{Mar73}. In 2008, Bourgain and Gamburd developed an analytic-combinatorial tool which is now called the "Bourgain-Gamburd expansion machine", which allows them to prove the super approximation property for any Zariski-dense subgroup of $\mathrm{SL}_2(\mathbb{Z})$ with respect to prime moduli \cite{BG08b}, another critical ingredient is Helfgott's triple product theorem \cite{Hel08}. Since then, there has been a series of papers extending Bourgain-Gamburd's Theorem to more general groups with respect to more general moduli \cite{BG08a}, \cite{BG09}, \cite{BGS10}, \cite{GV12}, \cite{Var12}, \cite{BV12}, \cite{Gol19}.\\
In \cite{GV12},there is a conjecture:
\begin{Conjecture} (Question 2, \cite{GV12}). Let $G<S L_n(\mathbb{Z})$ be finitely generated, then $G$ has the super approximation property with respect to all positive integers if and only if the identity component $\mathbb{G}_0$ of the Zariski closure $\mathbb{G}$ of $G$ is perfect, i.e. $\left[\mathbb{G}_0,\mathbb{G}_0\right]=\mathbb{G}_0$.
\end{Conjecture}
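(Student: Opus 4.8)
\medskip
\noindent
The statement is an equivalence whose two halves are of completely different character, and I would treat them separately. For the implication ``super approximation with respect to all positive integers $\Longrightarrow$ $\mathbb{G}_0$ perfect'' I would argue by contraposition. If $[\mathbb{G}_0,\mathbb{G}_0]\neq\mathbb{G}_0$ then, the commutator subgroup of a connected algebraic group being closed and connected, $\mathbb{G}_0^{\mathrm{ab}}:=\mathbb{G}_0/[\mathbb{G}_0,\mathbb{G}_0]$ is a positive-dimensional commutative $\mathbb{Q}$-group, whence $\bigl|\mathbb{G}_0^{\mathrm{ab}}(\mathbb{F}_p)\bigr|\to\infty$ as $p\to\infty$. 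The subgroup $\Gamma:=G\cap\mathbb{G}_0$ has finite index in $G$ and is Zariski dense in $\mathbb{G}_0$, so its image in $\mathbb{G}_0^{\mathrm{ab}}$ is Zariski dense; clearing denominators in the defining $\mathbb{Q}$-morphism and reducing modulo $p$ then shows that for all but finitely many $p$ the reduction $\Gamma_p$ maps onto a subgroup of bounded index of the abelian group $\mathbb{G}_0^{\mathrm{ab}}(\mathbb{F}_p)$, hence onto an abelian group of order tending to infinity. But the Cheeger constant of a Cayley graph of a group $N$ is at most that of the corresponding Cayley graph of any quotient of $N$, while bounded-degree Cayley graphs of abelian groups of unbounded order have Cheeger constant tending to $0$ (a classical fact, e.g.\ via character estimates). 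Hence the graphs $\mathcal{C} a y(\Gamma_p,S(\bmod p))$, and --- since super approximation passes to and from finite-index subgroups (cf.\ \cite{GV12}) --- also $\mathcal{C} a y(G_p,S(\bmod p))$, fail to be expanders along an infinite sequence of primes, so $G$ lacks super approximation with respect to all positive integers.

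For the converse, ``$\mathbb{G}_0$ perfect $\Longrightarrow$ super approximation'' --- the deep direction --- the plan follows the now-standard architecture, the genuinely new work being concentrated in one step. First, by strong approximation (Nori, Weisfeiler, Matthews--Vaserstein--Weisfeiler) I would fix a finite set $T$ of primes outside which $G_q$ is the expected congruence image of $\mathbb{G}$; the primes of $T$ and their powers are dealt with afterwards by the same machinery with more bookkeeping. Next, by the Chinese Remainder Theorem $G_q$ is essentially $\prod_{p^{k}\,\|\,q}G_{p^{k}}$, and the results of \cite{Var12,BV12,GV12} on expansion for general moduli reduce a uniform spectral gap for $G_q$ to one for each $G_{p^{k}}$ together with a quasi-randomness estimate; the passage from $G_{p^{k}}$ to $G_p$ goes through the congruence filtration, whose graded pieces are abelian but carry no invariants under the level-one part \emph{precisely because $\mathbb{G}_0$ is perfect} --- this is where the hypothesis enters at the prime-power level.

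It then remains to establish a uniform spectral gap for the graphs $\mathcal{C} a y(G_p,S(\bmod p))$, $p\notin T$, which I would do with the Bourgain--Gamburd machine \cite{BG08b}: (i) a girth / non-concentration estimate at the bottom scale, obtained from the Tits alternative (a Zariski-dense $S$ contains a free subgroup) together with effective escape from proper subvarieties; and (ii) an $\ell^{2}$-flattening bound for convolution powers, resting on a Helfgott-type product theorem \cite{Hel08} in $\mathbb{G}(\mathbb{F}_p)$ to the effect that any symmetric generating set of size $<|G_p|^{1-\epsilon}$ that is not already controlled by a bounded-complexity proper (approximate) subgroup grows under tripling. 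The crux --- and the content the present paper must actually supply, in the case $\mathbb{G}_0=\mathrm{SL}_2\times\mathrm{SL}_2\times\mathrm{SL}_2$ --- is the flattening/product-theorem step when the semisimple part has several \emph{isomorphic} simple factors, because then there are many proper algebraic subgroups to escape. Beyond invoking Helfgott's theorem factor by factor, one must classify, up to bounded complexity and uniformly in $p$, the approximate subgroups of $\mathrm{SL}_2(\mathbb{F}_p)^{3}$: by Goursat's lemma these are controlled by the diagonal and graph-of-automorphism subgroups of the various sub-products, and one must then show quantitatively that a convolution power not concentrated near any of the finitely many conjugacy classes of such subgroups genuinely expands. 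I expect this case analysis --- combining Helfgott's argument with the product-theorem machinery for products of finite simple groups of bounded rank (Pyber--Szab\'o, Breuillard--Green--Tao) and a careful treatment of ``unbalanced'' approximate subgroups of a product --- rather than any single inequality, to be the main obstacle.

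Finally, for a general perfect $\mathbb{G}_0$ one would additionally have to accommodate a possibly nontrivial unipotent radical (perfectness does not force semisimplicity, witness $\mathrm{SL}_2\ltimes\mathbb{G}_a^{2}$) and arbitrary semisimple types; this is why the conjecture is still approached case by case, and why the representative three-factor $\mathrm{SL}_2$ case is the object of this work.
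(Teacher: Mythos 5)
You are proposing a proof of a statement that the paper records only as a \emph{Conjecture} (Question~2 of \cite{GV12}): the paper neither proves it nor claims to, and instead proves the single special case $\mathbb{G}_0=\mathrm{SL}_2\times\mathrm{SL}_2\times\mathrm{SL}_2$, using the square-free/bounded-power case (Salehi-Golsefidy \cite{Gol19}) as a black box. So there is no paper proof to compare with, and your text must stand as a proof of an open problem, which it does not. Your ``only if'' direction is essentially the standard argument and is correct in outline (a non-perfect $\mathbb{G}_0$ yields abelian congruence quotients of unbounded order, and bounded-degree Cayley graphs of abelian groups of unbounded order are not expanders), though the intermediate claim that $\Gamma_p$ surjects onto a \emph{bounded-index} subgroup of $\mathbb{G}_0^{\mathrm{ab}}(\mathbb{F}_p)$ is false in general (the image of $\langle 2\rangle$ in $\mathbb{F}_p^{*}$ can have unbounded index); what you need, and what is true, is only that the order of the image tends to infinity.

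The genuine gap is in the ``if'' direction, which is the open half. The step you dismiss in one sentence --- passing from $G_p$ to $G_{p^{k}}$, and then to arbitrary $q$, ``through the congruence filtration, whose graded pieces are abelian but carry no invariants because $\mathbb{G}_0$ is perfect'' --- is precisely the unresolved difficulty: absence of invariant vectors in the graded pieces does not yield a spectral gap uniform in $k$, and if such a formal reduction worked the full conjecture would already follow from \cite{Gol19}, which it does not. Handling arbitrary moduli requires quantitative non-concentration of the random walk modulo exact prime-power divisors, bounded-generation statements for large subsets of the congruence quotients with control on the exceptional divisors, and a mechanism for gluing different prime-power scales via approximate homomorphisms (Proposition~3.1 of \cite{tang2023super}); this is exactly what Sections~3--7 of the present paper must supply even in the special case $\mathrm{SL}_2^{3}$. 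Relatedly, you locate the crux in a Helfgott/Breuillard--Green--Tao-type classification of approximate subgroups of $\mathrm{SL}_2(\mathbb{F}_p)^{3}$, but for prime (indeed square-free) moduli the required expansion is already known and is taken as a black box here; the new content is at prime powers and in the gluing, not mod $p$. Your closing paragraph concedes that unipotent radicals and general semisimple types are not treated, so even as a program the argument does not establish the conjectured equivalence.
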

In the same paper \cite{GV12} and following development generalized by Salehi-Golsefidy to bounded powers of square free numbers.\\
\begin{theorem}
(Salehi-Golsefidy \cite{Gol19}) Let $G<S L_n(\mathbb{Z})$ be finitely generated, then $G$ has the super approximation property with respect to bounded powers of square free integers if and only if the identity component $\mathbb{G}_0$ of the Zariski closure $\mathbb{G}$ of $G$ is perfect, i.e. $\left[\mathbb{G}_0, \mathbb{G}_0\right]=\mathbb{G}_0$.    
\end{theorem}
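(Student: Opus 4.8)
\emph{The easy direction.} If $[\mathbb{G}_0,\mathbb{G}_0]\ne\mathbb{G}_0$, then $\mathbb{G}_0$ has a nontrivial character or additive quotient, so by strong approximation $G_q$ has, along a suitable sequence of moduli $q$ (for instance primes), abelian quotients $A_q$ with $|A_q|\to\infty$. A Cayley graph of an abelian group on boundedly many generators has diameter $\gg|A_q|^{1/|S|}$, hence Cheeger constant tending to $0$, and Cheeger constants do not increase under passing to a quotient graph, so $h_q\to0$ along that sequence. Hence the content of the theorem is the forward implication, and from now on I assume $[\mathbb{G}_0,\mathbb{G}_0]=\mathbb{G}_0$ and aim to produce a single $\epsilon>0$ with $h_q>\epsilon$ for every $q$ that is a bounded power of a square-free integer.

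\emph{The Bourgain--Gamburd machine.} The plan is to run the Bourgain--Gamburd expansion machine, adapted to the product-and-prime-power structure of the modulus. Fix the exponent bound $d$, write $q=q_0^{e}$ with $q_0$ square-free and $e\le d$, let $\mu$ be the symmetric probability measure uniform on $S(\bmod q)$, and set $\ell\sim C\log|G_q|$; the target is the mixing estimate $\|\mu^{(2\ell)}-u_{G_q}\|_2\le|G_q|^{-1}$, which is equivalent to a spectral gap uniform in $q$, hence to $h_q>\epsilon$. Three ingredients are required. First, \emph{quasirandomness}: by strong approximation (Matthews--Vaserstein--Weisfeiler, Nori, Weisfeiler) $G_q$ maps onto a bounded-index subgroup of $\prod_{p\mid q_0}\mathbb{G}(\mathbb{Z}/p^{e}\mathbb{Z})$, whose semisimple quotient is a product of finite quasisimple groups of Lie type of rank bounded in terms of $n$, each with minimal faithful representation of dimension $\gg p$; global quasirandomness is nonetheless insufficient because $q_0$ may have small prime factors, so the terminal stage of the machine has to be carried out factor by factor, exploiting that the walk already equidistributes on each local factor by the single-modulus case. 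Second, \emph{escape from subvarieties}: using strong approximation together with the perfectness hypothesis (which rules out an invariant abelian direction in which the walk could be trapped), one shows $\mu^{(c\log|G_q|)}(xH)\le|G_q|^{-\kappa}$ for every proper subgroup $H<G_q$ and coset $xH$. Third, and crucially, \emph{$\ell^2$-flattening}: if $\nu$ is a probability measure on $G_q$ with $\|\nu\|_2^2\ge|G_q|^{-1+\delta}$ and no concentration as above, then $\|\nu*\nu\|_2\le\|\nu\|_2^{1-\tau}$; this is to be derived from a growth theorem for approximate subgroups of $G_q$.

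\emph{The growth theorem.} This is the heart of the argument. Up to bounded index $G_q$ is a direct product over $p\mid q_0$ of local pieces, each an extension of a product of finite simple groups of Lie type over $\mathbb{F}_p$ by the $p$-group coming from the congruence filtration of $\mathbb{Z}/p^{e}\mathbb{Z}$. The aim is to prove that a $K$-approximate subgroup $A$ with $|G_q|^{\delta}\le|A|\le|G_q|^{1-\delta}$ satisfies $|A^3|\ge K^{1+\eta}|A|$ unless $A$ is covered by $K^{O(1)}$ cosets of a proper subgroup which is itself a product of proper subgroups of the factors (in the prime-power case further cut down by a congruence subgroup). Inside each simple factor this is the Pyber--Szab\'o / Breuillard--Green--Tao product theorem for finite simple groups of bounded rank, which rests on Helfgott's triple product theorem for $\mathrm{SL}_2$ \cite{Hel08}; the new work is the induction across the many factors, ruling out the ``mixed'' configuration in which $A$ is essentially everything in some coordinates and tiny in others, together with, when $e\ge2$ or when $\mathbb{G}_0$ has a nontrivial unipotent radical, a Lie-algebra growth estimate for the nilpotent layers. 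Here the perfectness hypothesis re-enters: it is equivalent to the associated graded Lie algebra having no nonzero invariants under the reductive part, which is exactly what prevents a slowly growing $A$ from hiding in a proper invariant subspace. Feeding this growth theorem into the flattening step and then assembling the three ingredients in the usual way gives the uniform spectral gap; in outline this follows Varj\'u's treatment of $\mathrm{SL}_2$ with square-free moduli \cite{Var12}, carried out in the general perfect case \cite{Gol19}.

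\emph{Main obstacle.} The crux is the growth theorem with constants \emph{independent of the number of prime factors of $q_0$}. One copy of $\mathrm{SL}_2(\mathbb{F}_p)$ behaves essentially like a simple group, but $\prod_{p\mid q_0}\mathrm{SL}_2(\mathbb{F}_p)$ has an enormous lattice of normal subgroups, and one must show that slow growth forces $A$ into one genuinely proper product subgroup rather than into a subgroup-like structure that differs coordinate by coordinate. Identifying the correct notion of being ``controlled by a proper subgroup'' and closing the induction on the factors so that the per-factor losses are absorbed is where essentially all the difficulty lies; once that is in place, the factor-by-factor quasirandomness and the escape-from-subvarieties input are routine applications of the existing machinery.
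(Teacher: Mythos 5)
The paper does not prove this theorem. It is Salehi-Golsefidy's result, quoted verbatim from \cite{Gol19} and used as a black box: immediately after stating it the paper says it will invoke it ``in the special case $\mathbb{G}_0=\mathrm{SL}_2\times\mathrm{SL}_2\times\mathrm{SL}_2$ as a blackbox.'' So there is no internal proof against which to check your proposal, and the correct posture here is to recognize the statement as an external input rather than to reprove it; the paper's actual content is the extension from bounded prime powers (the square-free-with-bounded-exponent moduli that \cite{Gol19} handles) to \emph{all} moduli $q$, which requires the gluing-of-moduli machinery of Sections~3--7.

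Treated on its own terms, what you have written is a serviceable road map of the Bourgain--Gamburd/Varj\'u/Salehi-Golsefidy strategy rather than a proof. The easy direction is correct: a non-perfect $\mathbb{G}_0$ yields abelian quotients $A_q$ of unbounded size, a bounded-generator Cayley graph of an abelian group of order $N$ has diameter $\gg N^{1/|S|}$ and hence Cheeger constant tending to $0$, and Cheeger constants do not increase under quotients. For the hard direction you name the three machine inputs accurately and correctly locate the difficulty in a growth theorem whose constants are independent of $\omega(q_0)$ and which must also handle the nilpotent congruence layers when $e\ge2$. But that growth theorem -- which you yourself call ``the heart of the argument'' -- is only stated as a target; nothing in your text establishes it, nor quantifies the escape-from-subgroups estimate, nor carries out the factor-by-factor endgame you flag as necessary because small primes defeat global quasirandomness. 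So the proposal identifies the right skeleton and the right obstruction, but it is not a proof and should not be read as one.
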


There are still many technical challenges to generalized this conjecture, In many aforementioned works, the sum-product theorem is 
an efficient method, in this paper, we follow the step in \cite{tang2023super} to 
prove our main theorem of this paper:

\begin{theorem}
Let $S \subset \mathrm{SL}_2(\mathbb{Z}) \times \mathrm{SL}_2(\mathbb{Z}) \times \mathrm{SL}_2(\mathbb{Z})$ be a finite symmetric set, and assume that it generates a group $G$ which is Zariski-dense in $\mathrm{SL}_2 \times \mathrm{SL}_2 \times \mathrm{SL}_2$, then $G$ has the super approximation property with respect to all positive integers.
\end{theorem}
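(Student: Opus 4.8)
\section*{Proof strategy}

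The plan is to establish a uniform spectral gap for the Cayley graphs of $\mathbf{G}_q := \mathrm{SL}_2(\mathbb{Z}/q\mathbb{Z})^3$ by running the Bourgain--Gamburd expansion machine, in the quantitative form of Bourgain--Varj\'u \cite{BV12}, fed with a product theorem and a non-concentration estimate adapted to the triple-product structure. First I would apply strong approximation: since $\mathrm{SL}_2^3$ is connected, semisimple and simply connected and $G$ is Zariski-dense, the closure of $G$ in $\prod_p \mathrm{SL}_2(\mathbb{Z}_p)^3$ is open (Nori, Weisfeiler), so there is a finite set of primes $T$ with $G_q = \mathbf{G}_q$ whenever $\gcd(q,T)=1$, while for general $q = q_T q'$ (with $q_T$ supported on $T$) one has $G_q \hookrightarrow G_{q_T}\times \mathbf{G}_{q'}$ with $|G_{q_T}|$ bounded. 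Writing $\mu$ for the uniform probability measure on the image of $S$ in $G_q$, the goal is to produce $t = O(\log|G_q|)$ with $\|\mu^{*(2t)} - u_{G_q}\|_2 \le |G_q|^{-1}$, which is equivalent to the expander conclusion. A standard d\'evissage --- first across the prime factors of $q$, then up the powers $p^n \| q$ --- reduces the analytic core to: (i) $q=p$ prime; (ii) $q = p^n$ a prime power; and (iii) gluing the gap across distinct primes, which follows the Bourgain--Varj\'u argument once (i) and (ii) hold.

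For (i) the three classical ingredients are as follows. \emph{Quasirandomness:} for $p$ outside a bounded exceptional set, every nontrivial irreducible representation of $\mathrm{SL}_2(\mathbb{F}_p)^3$ is an external tensor product that is nontrivial on some factor, hence of dimension $\ge (p-1)/2 \asymp |\mathbf{G}_p|^{1/9}$. \emph{Product theorem:} if $A \subseteq \mathbf{G}_p$ has $|A| \le |\mathbf{G}_p|^{1-\delta}$ and is not contained in $O(1)$ cosets of a proper subgroup, then $|AAA| \ge |A|^{1+\varepsilon}$; this follows from Helfgott's triple product theorem \cite{Hel08} and the Breuillard--Green--Tao product theorem applied in each quasisimple factor $\mathrm{SL}_2(\mathbb{F}_p)$, combined with Goursat's lemma to classify the possible ``linked'' subgroups. \emph{Non-concentration:} for $\ell_0 = c\log|\mathbf{G}_p|$ one has $\mu^{*\ell_0}(gH) \le |\mathbf{G}_p|^{-\delta_0}$ for every proper subgroup $H$ and coset $gH$. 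An $\ell^2$-flattening iteration then drives $\|\mu^{*t}\|_2^2$ down to $|\mathbf{G}_p|^{-1+\delta}$ in $O(\log|\mathbf{G}_p|)$ steps, and quasirandomness converts this into the spectral gap, exactly as in \cite{BG08b}. For (ii) I would bootstrap the prime-level gap up the congruence filtration, whose layers $\Gamma(p^k)/\Gamma(p^{k+1}) \cong \mathfrak{sl}_2(\mathbb{F}_p)^3$ are abelian but carry an $\mathbb{F}_p$-structure, via a sum--product estimate in $\mathbb{F}_p$ (in the manner of Bourgain--Gamburd's treatment of $\mathrm{SL}_2(\mathbb{Z}/p^n)$), with constants uniform in $p$ and $n$; the product theorem and non-concentration recur here in scale-localized form, and the representation-theoretic endgame is again that on the complement of the bad set no nontrivial representation is small at every congruence level.

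The principal obstacle is the non-concentration estimate, because the lattice of proper algebraic subgroups of $\mathrm{SL}_2^3$ is far richer than that of a single $\mathrm{SL}_2$. Besides subgroups pulled back from proper subgroups of the individual factors (Borels, torus normalizers, finite subgroups), one must rule out concentration of the random walk on the \emph{Goursat / graph subgroups}: graphs of isomorphisms between two of the three coordinates (of size $\asymp |\mathbf{G}_q|^{2/3}$), the fully diagonal-type subgroups linking all three (of size $\asymp |\mathbf{G}_q|^{1/3}$), and their products with proper subgroups of the remaining free coordinate. The structural point that makes this possible is that non-negligible mass of $\mu^{*\ell_0}$ on any linked subgroup would force the projection of $G$ to some pair $\mathrm{SL}_2 \times \mathrm{SL}_2$ to lie inside a graph subgroup, contradicting the Zariski-density of $G$ in $\mathrm{SL}_2^3$ --- which passes to each coordinate projection since those are surjective morphisms of algebraic groups. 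To make this effective I would: enumerate the finitely many conjugacy types of proper algebraic subgroups of $\mathrm{SL}_2^3$, each cut out by equations of bounded degree and stable under the relevant coordinate projections and Goursat decompositions; invoke a uniform quantitative escape of the random walk from each such family, using Breuillard's uniform Tits alternative and the non-amenability of $G$ in the manner of Salehi-Golsefidy--Varj\'u, but tracking all subgroup types at once; and transfer these estimates from the prime case to prime-power and composite moduli. The remaining genuine work is the multi-scale bookkeeping: the hypothesis ``$A$ is not trapped in a proper subgroup'' in the product theorem must be unpacked separately at each prime and congruence level dividing $q$ and then reassembled, and in the triple-product setting this ledger additionally has to carry the Goursat linking data between the three factors --- precisely the part of the argument of \cite{tang2023super} that must be reworked in passing from two factors to three, and where I expect most of the technical length to lie.
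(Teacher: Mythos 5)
Your outline follows the Bourgain--Gamburd template but is a genuinely different decomposition from the paper's. The paper does not rebuild the expansion machine: it cites \cite{AM85,BV12} to reduce the theorem to a single product-growth criterion (Proposition~2.2: if $\chi_S^{(l)}(A) > q^{-\delta}$, $l > \delta^{-1}\log q$, and $|\pi_q(A)| < |\Gamma_q|^{1-\epsilon}$, then $|\pi_q(A\cdot A\cdot A)| > |\pi_q(A)|^{1+\delta}$), and devotes the rest of the paper to that statement. The proof writes $q = q_sq_l$, treats the small-exponent part $q_s$ by invoking Salehi-Golsefidy's bounded-power-squarefree theorem \cite{Gol19} as a blackbox, proves a Helfgott-type bounded-generation statement over $\mathrm{SL}_2(\mathbb{Z}/q_1\mathbb{Z})\times\mathrm{SL}_2(\mathbb{Z}/q_2\mathbb{Z})\times\mathrm{SL}_2(\mathbb{Z}/q_3\mathbb{Z})$ (Section~3), shows non-concentration of $\chi_S^{(l)}$ on primitive linear and trace forms by an effective Bezout identity \cite{BY91} (Section~4), and --- its distinctive step --- glues exact divisors together using the approximate-homomorphism dichotomy of \cite[Prop.~3.1]{tang2023super} (recorded here as Proposition~6.1). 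Your plan instead runs the machine from scratch: quasirandomness, a product theorem mod $p$ via Helfgott and Goursat, escape from subvarieties, $\ell^2$-flattening, a sum--product bootstrap up prime powers, and a Bourgain--Varj\'u gluing over primes. The paper's route buys heavy reuse of \cite{Gol19,tang2023super} at the cost of opacity; yours is conceptually cleaner at the prime level but has to reconstitute all the uniformity from scratch.

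The place where your outline has a genuine gap is the composite-modulus gluing. You assert that ``(iii) gluing the gap across distinct primes\dots follows the Bourgain--Varj\'u argument once (i) and (ii) hold,'' but \cite{BV12} carries this out for $\mathrm{SL}_d$, where the quotient mod $q$ is a single quasisimple-by-center group at each prime. For $\mathrm{SL}_2^3$, the reduction mod $q$ carries an extra layer of Goursat/diagonal linking across the exact divisors of $q$ that has no analogue in the $\mathrm{SL}_d$ case, and once the three coordinate projections of a product set already fill congruence subgroups modulo pairwise non-coprime moduli $q_1,q_2,q_3\|q$, neither sum--product nor the approximate-subgroup inverse theorems by themselves enlarge the joint image. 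You correctly flag ``multi-scale bookkeeping of the Goursat linking data'' as where the technical weight lies, but you do not name a mechanism for it. The paper's mechanism is Proposition~6.1: a section $\psi\colon G\to B$ of the projection to one set of coordinates either fails multiplicativity on a density-$\varepsilon$ subset modulo some large exact divisor of another coordinate (Case~1, handled by conjugating the defect $\psi(xy)\psi(y)^{-1}\psi(x)^{-1}$ to populate a new congruence block), or agrees with a genuine homomorphism on a density-$(1-\sqrt\varepsilon)$ subset (Case~2, handled by commutator amplification or by constructing a one-parameter subgroup). That dichotomy, run fiberwise over the Chinese-remainder decomposition and iterated, is what allows the modulus of one coordinate to grow without shrinking the others; a proof along your lines would need to discover something playing the same role, and nothing in your proposal does.
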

We will use Theorem $\mathbf{1.1}$ in the special case $\mathbb{G}_0=\mathrm{SL}_2 \times \mathrm{SL}_2 \times \mathrm{SL}_2$ as a blackbox.\\\\

\noindent\textbf{Acknowledgements}:Thank Tang jincheng,Zhang Xin,Xiao xuanxuan for precious suggestion on this paper.

\section{Notation}
In this part we will present some notations which will be used in the paper.
The multiplicative unit of any written group is denoted by 1. Besides, if there is a ring structure, we denote the additive unit by 0.For any two subsets $A$ and $B$, we define their product set by $A \cdot B=\{a b \mid a \in A, b \in B\}$, and their sum set by $A+B=$ $\{a+b \mid a \in A, b \in B\}$. The $k$-fold product of $A$ is denoted by $A^k$, and the $k$-fold sum of $A$ is denoted by $\sum_k A$.

For any two complex valued functions $f$ and $g$ on a discrete group $G$, we define  $f * g$  as their convolution
$$
f * g(x)=\sum_{y \in G} f(y) g\left(x y^{-1}\right) .
$$

We write $f^{(l)}$ for the $k$-fold convolution of $f$ with itself.\\

Now we define the division $\|$,For a prime $p$, consider $p^n \| q$ if $p^n \mid q$ but $p^{n+1} \nmid q$ and for two integers $q_1$ and $q_2$, we write $q_1 \| q_2$ if for every $p^n \| q_1$, we also have $p^n \| q_2$. This definition can be extend to any ideal in a similar way.

For $q=\prod_i p_i^{n_i} \in \mathbb{Z}_{+}$and $\alpha$ a real positive number, we let $q^{\{\alpha\}}=\prod_i p_i^{\left[n_i \alpha\right]}$, where $\left[n_i \alpha\right]$ is the integer part of $n_i \alpha$.

Let $\pi_q: \mathbb{Z} \rightarrow q \mathbb{Z}$ be the residue map, which induces residue maps in various other contexts, and we denote them by $\pi_q$ as well.

Let $\Gamma=\mathrm{SL}_2(\mathbb{Z}) \times \mathrm{SL}_2(\mathbb{Z}) \times \mathrm{SL}_2(\mathbb{Z})$ and $\Lambda=\mathrm{SL}_2(\mathbb{Z}), \Gamma_q=\Gamma(\bmod q)$ and $\Gamma(q)$ be the kernel of the reduction map $\pi_q: \Gamma \rightarrow \Gamma_q$. Same meanings for $\Lambda(q)$ and $\Lambda_q$. We denote by $\mathbb{P}_i(i=1,2,3)$ the projection of $\Lambda \times \Lambda \times \Lambda$ to its $i^{\text {th}}$ factor. Sometimes we need to reduce the three factors of $\Lambda \times \Lambda \times \Lambda$ by three different moduli and we denote by $\pi_{q_1, q_2,q_3}$ the reduction map $\Lambda \times \Lambda \times \Lambda \rightarrow \Lambda_{q_1} \times \Lambda_{q_2}\times \Lambda_{q_3}$.

We adopt $f(q)<q^{c+}$to mean $f(q)<q^{c+\epsilon}$ for arbitrarily small $\epsilon$ when $q$ large. Similarly, $f(q)>q^{c-}$ to mean $f(q)>q^{c-\epsilon}$ for arbitrarily small $\epsilon$ when $q$ large.

Let $\chi_s$ be the normalized uniform counting measure supported on $S$, i.e., for $A \subset \Lambda \times \Lambda \times \Lambda$, $\chi_S (A)=\frac{|A \cap S|}{|S|}$. Let $\left.\pi_q[\chi s\right]$ be the pushforward of $\chi s$ under the residue map $\pi_q$. Let $T_q$ be the convolution operator by $\pi_q[\chi s]$, i.e., For $f \in l^2\left(G_q\right)$,
\begin{equation}
T_q(f)=\pi_q[\chi s] * f \text {. }    
\end{equation}

Then $T_q$ is a self adjoint operator on $l^2\left(G_q\right)$ with an invariant subspace $l_0^2\left(G_q\right)$ consisting of functions with average 0 . Denote the set of eigenvalues of $T_q$ on $l_0^2\left(G_q\right)$ by $E_q$.The previous work from Alon and Milman\cite{AM85} and \cite{BV12} make the theorem into the following proposition.

\begin{proposition}
Let $S \subset \Gamma$ be symmetric, and assume that it generates a group $G$ which is Zariski-dense in $\mathrm{SL}_2 \times \mathrm{SL}_2 \times \mathrm{SL}_2$. Then for any $\epsilon>0$ there is a $\delta>0$ such that the following holds. If $q \in \mathbb{Z}_{+}$sufficiently large, $A \subset \Gamma$ symmetric, and some integer $l$ satisfying
\begin{equation}
\chi_S^{(l)}(A)>q^{-\delta}, \quad l>\delta^{-1} \log q \quad \text { and } \quad\left|\pi_q(A)\right|<\left|\Gamma_q\right|^{1-\epsilon},    
\end{equation}
then
\begin{equation}
\left|\pi_q(A \cdot A \cdot A)\right|>\left|\pi_q(A)\right|^{1+\delta}
\end{equation}   
\end{proposition}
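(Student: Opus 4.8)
The plan is to carry out the ``growth'' (product-theorem) step of the Bourgain-Gamburd machine in $\Gamma_q=\mathrm{SL}_2(\mathbb{Z}/q\mathbb{Z})^3$, adapting the scheme of \cite{tang2023super} from two to three factors. I would first record the structural inputs. Since $G$ is Zariski-dense in $\mathrm{SL}_2\times\mathrm{SL}_2\times\mathrm{SL}_2$, Goursat's lemma together with a dimension count shows that every coordinate projection $\mathbb{P}_i(G)$ is Zariski-dense in $\mathrm{SL}_2$ and every projection $\mathbb{P}_{ij}(G)$ to a pair of factors is Zariski-dense in $\mathrm{SL}_2\times\mathrm{SL}_2$; indeed, were some $\overline{\mathbb{P}_{ij}(G)}$ a proper closed subgroup it would be the graph of an isogeny, forcing $G$ into a subgroup of $\mathrm{SL}_2^3$ of dimension at most $6<9$. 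By strong approximation, $\pi_q(G)=\Gamma_q$ for all $q$ coprime to a fixed $q_0$, and by the Chinese Remainder Theorem $\Gamma_q\cong\prod_{p\mid q}\mathrm{SL}_2(\mathbb{Z}/p^{n_p}\mathbb{Z})^3$, so the analysis localizes prime-by-prime and scale-by-scale, with the bookkeeping organized by the notations $q^{\{\alpha\}}$ and $\|$. The contribution of $q_0$ and of bounded prime-power parts of $q$ is subsumed in the black-box Theorem~1.1 for the perfect group $\mathrm{SL}_2^3$, so the genuinely new regime is that of large powers of good primes.

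The hypotheses $\chi_S^{(l)}(A)>q^{-\delta}$ and $l>\delta^{-1}\log q$ are there to force $\pi_q(A)$ to be \emph{non-concentrated}. I would combine the spectral gap of the simple random walk generated by $S$ on the non-amenable group $\langle S\rangle$ (Kesten) with an effective escape-from-subvarieties estimate for Zariski-dense subgroups, fed by the Zariski-density of $G$ and of all its coordinate and pairwise projections from the previous paragraph. The output is: for every prime $p\mid q$ away from $q_0$ and every algebraic subgroup $\mathbf H\subsetneq\mathrm{SL}_2^3$ of bounded complexity, the random walk $\chi_S^{(l)}$, and hence $\pi_q(A)$, puts at most a $q^{-\delta'}$-proportion of its mass on the congruence preimage of $\mathbf H(\mathbb{Z}/p^{n_p}\mathbb{Z})$; in particular $\pi_p(A)$ generates $\mathrm{SL}_2(\mathbb{F}_p)^3$ with $|\pi_p(A)|>p^{\delta'}$ at each such prime, and the reduction of $\pi_{p^{n_p}}(A)$ modulo the first congruence subgroup is non-concentrated.

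With non-concentration in hand, the target $|\pi_q(A\cdot A\cdot A)|>|\pi_q(A)|^{1+\delta}$ becomes a multi-scale product theorem. At a single good prime $q=p$: if $\pi_p(A)$ failed to grow, the Pl\"unnecke-Ruzsa inequalities would make it comparable to an approximate subgroup whose hull, modulo the center, is a subdirect product of subgroups of $\mathrm{SL}_2(\mathbb{F}_p)$; feeding each coordinate into Helfgott's $\mathrm{SL}_2(\mathbb{F}_p)$ product theorem \cite{Hel08}, together with Dickson's classification and the non-concentration of the preceding step, forces each coordinate hull to be all of $\mathrm{SL}_2(\mathbb{F}_p)$, so the only surviving obstruction is that $\pi_p(A)$ concentrate on the graph of an isomorphism between two or all three of the factors. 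Each such graph is the set of $\mathbb{F}_p$- or $\mathbb{F}_{p^k}$-points of a proper algebraic subgroup --- a conjugate of a (Frobenius-twisted) diagonal --- and here the sum-product theorem in $\mathbb{F}_p$ enters: it both supplies the quantitative escape that excludes these twisted-diagonal traps (via the Zariski-density of the pairwise projections $\mathbb{P}_{ij}(G)$) and controls the subfield structure. Ruling all of this out contradicts $|\pi_p(A)|<|\Gamma_p|^{1-\epsilon}$. To pass to $q=p^n$ I would use the congruence filtration of $\mathrm{SL}_2(\mathbb{Z}/p^n\mathbb{Z})$, whose graded pieces are copies of the perfect Lie algebra $\mathfrak{sl}_2(\mathbb{F}_p)^3$: growth at the residue level comes from the prime case, and growth through the nilpotent part is linear, hence governed exactly by the sum-product phenomenon in $\mathbb{F}_p$ --- this is where perfectness of $\mathbb{G}_0$ is indispensable and where \cite{tang2023super} adapts with only cosmetic changes. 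Finally, for composite $q$ I would split the prime-power factors of $q$ into those for which $\pi_p(A)$ is already nearly full and the rest; the product decomposition of $\Gamma_q$ and a Ruzsa covering/counting argument then upgrade growth achieved on a $\log$-positive-proportion subfamily of factors to the global bound, provided $\delta$ is chosen small relative to the growth and escape exponents produced above.

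\textbf{Main obstacle.} The only point at which three factors is more than a formality is the elimination of \emph{mixed} twisted-diagonal traps: subsets of $\pi_q(A)$ that are essentially full modulo some prime-power factors but concentrated on a Frobenius-twisted graph relating two (or all three) of the $\mathrm{SL}_2$'s modulo others. Handling these requires a genuinely quantitative escape-from-subvariety estimate that is uniform in $q$, drawing on the Zariski-density of \emph{every} pairwise projection $\mathbb{P}_{ij}(G)$ and a sum-product input to kill the subfield contribution; everything else --- the $\mathrm{SL}_2$ product theorem, the congruence filtration, and the multi-scale assembly --- is available off the shelf or transcribed directly from \cite{tang2023super}.
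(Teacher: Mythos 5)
Your proposal has the right general shape and correctly identifies the blackbox role of Theorem 1.1 for square-free and bounded prime-power moduli, and the non-concentration input (the paper's Propositions 4.1 and 4.2, proved via an effective Bezout identity rather than the escape-from-subvariety estimates you gesture at). But the obstacle you single out as ``main'' --- Frobenius-twisted graphs linking two or three of the $\mathrm{SL}_2$ coordinates --- is precisely what Theorem 1.1 already dispatches at the square-free level, so it is not where the new work happens. The genuinely new difficulty, which your proposal compresses to ``a Ruzsa covering/counting argument,'' is the \emph{gluing of coprime prime-power moduli for a fixed coordinate}: a set $B$ can be nearly all of $\Lambda_{q_1}\times\Lambda_{q_2}\times\Lambda_{q_3}$ and nearly all of $\Lambda_{q_4}$ (first coordinate, $\gcd(q_1,q_4)=1$) while being very thin in $\Lambda_{q_1q_4}\times\Lambda_{q_2}\times\Lambda_{q_3}$, because $B$ can approximate the graph of a map $\psi:\Lambda_{q_1}\times\Lambda_{q_2}\times\Lambda_{q_3}\to\Lambda_{q_4}$. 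A Ruzsa covering argument does not see this structure. The paper handles it by the approximate-homomorphism trichotomy (Proposition 6.1, imported from \cite{tang2023super}): either $\psi$ has many non-multiplicative pairs (then conjugation/commutators of $\psi(xy)\psi(x)^{-1}\psi(y)^{-1}$ create a large set trivial in the $q_1$-component), or $\psi$ agrees with a genuine homomorphism $h$ on most of the domain, which is then split into the case $h$ trivial at a definite scale (iterated commutators shrink the kernel) and $h$ nontrivial at a definite scale (build a one-parameter group and use Proposition 4.1/4.2 to rotate it). The constraint that such an $h$ between congruence quotients at \emph{distinct} primes is forced to be trivial (Lemma 6.3) is the structural input replacing the Goursat/Dickson considerations in your sketch, and it is at a completely different place in the argument.

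Concretely, then: your proposal is missing Section 6 of the paper, which is the core of the proof. The prime-power growth you describe (congruence filtration, sum-product in $\mathbb{F}_p$, adapting Helfgott) does roughly correspond to the paper's Section 3 and Proposition 5.1, and the case split by the size of $q_l$ in Section 7 is compatible with your plan. But without the trichotomy argument and the iteration (start with $q_1=q_2=q_3=q_s$, $q_4=q'\| q_l$, apply Proposition 6.2, enlarge the first-coordinate modulus by a factor $q_4^*\|q_4$, repeat until $q_1^\star>q^{1-\epsilon/2}$, then do the second and third coordinates), you cannot assemble growth modulo $\prod p_i^{n_i}$ from growth modulo each $p_i^{n_i}$. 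The cost bookkeeping $\rho(\delta)\to 0$, $C(\delta)$, and the choice $\delta=\min\{\delta_0,\log(1/c_0)/6, (\epsilon/3C')(C'')^{-[\dots]}\}$ is also an essential part of closing the loop and does not appear in your sketch.
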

In the rest of the paper we will focus on proving Proposition 2.2. We give a sketch here.
\subsection{Sketch of the proposition 2.2}
Considering $q=q_s q_l$, where $q_s$ is the product of all exact prime power divisors of $q$ with small exponents, and $q_l=q / q_s$.

If $q$ is very small, then Theorem $\mathbf{1}$ is trivially hold because the condition is void.

If $q_l$ is not particularly small, we assume that a set A satisfies all the assumptions of proposition 2.2 but fails to satisfy the conclusion of proposition 2.2 which can derive a contradiction. We first show that there exists a large exact divisor $q^{\prime}$ of $q$ and some constant $C$ such that $\mathbb{P}_1\left(A^C\right)$,$\mathbb{P}_2\left(A^C\right)$ or $\mathbb{P}_3\left(A^C\right)$ contains a large congruence subgroup of $\Lambda_{q^{\prime}}$.For this part, we can refer the section 5 proposed in \cite{tang2023super} directly with minor change.

On the second stage, we glue local pieces together. The Proposition 3.1 of \cite{tang2023super} is the main gluing tool.To describe the idea,let us suppose a set $B \subset \Lambda_{q_1} \times \Lambda_{q_2} \times \Lambda_{q_3}$ such that $\mathbb{P}_1(B), \mathbb{P}_2(B),\mathbb{P}_3(B)$ are very large subsets of $\Lambda_{q_1}, \Lambda_{q_2},\Lambda_{q_3}$ for three exact and not necessarily coprime divisors $q_1, q_2,q_3$ of $q$. Let's just say $\mathbb{P}_1(B)=\Lambda_{q_1}$ , $\mathbb{P}_2(B)=\Lambda_{q_2}$ and $\mathbb{P}_3(B)=\Lambda_{q_3}$. We claim that we can find a large exact divisor $q_4$ of $q_2$ or $q_3$ such that a product set of $A \cup B$ can cover a very large subset of $\Lambda_{q_1} \times \Lambda_{q_4}\times \Lambda_{q_3}$ or $\Lambda_{q_1} \times \Lambda_{q_2}\times \Lambda_{q_4}$ modulo $\left(q_1,q_4, q_3\right)$ or $\left(q_1,q_2, q_4\right)$.To simplified the process, we discuss the divisor $q_4$ of $q_2$.For this, we consider a connecting map
$$
\psi: \Lambda_{q_1} \rightarrow B
$$
such that $\mathbb{P}_1 \circ \psi$ is identity.Consider a parameter $0<\theta<1$. By the Proposition 3.1 of \cite{tang2023super}, We can divide the possible situation into two scenarios:\\
(1) There exists a large exact divisor $q_4$ of $q_2$, and $x, y \in \Lambda_{q_1}$ such that for any $p^n \| q_4$, $\psi(x y) \neq \psi(x) \psi(y)\left(\bmod p^{\left[n \theta\right]}\right)$.\\
(2) There exists a large exact divisor $\bar{q}_4$ of $q_2$, and a large subset $S$ of $\Lambda_{q_1}$ such that for any $x, y \in S$, any $p^n \| q_4, \psi(x y)=\psi(x) \psi(y)\left(\bmod p^{[n \theta]}\right)$.\\
In the first scenario, we can conjugate the element $\psi(x y) \psi(x)^{-1} \psi(y)^{-1}$ by $B$ to create a large subset with first component 1 ,which is easy to follow.\\
In the second scenario, we consider two subcases. \\
The first one is that if there exists a large exact divisor $q_4$ of $\bar{q}_4$ such that $\pi_{q_4^{\{\theta\}}} \circ \mathbb{P}_1 \circ \psi=1$, then by taking commutator of $\psi\left(\Lambda_{q_1}\right)$ Iteratively, one can reconstruct a large set with a second component of 1, leading to the claim.

Another condition is that for a large exact divisor $q_4$ of $\tilde{q}_4$ such that for any $p^n \| q_4$, we have $\pi_{q_4^{\{(\frac{\theta}{2})}} \circ \mathbb{P}_1 \circ \varphi \neq 1$, then a one-parameter group $\mathcal{P}$ can be construct to extend the set so that we can capture all the prime divisors of $q_1$ and $q_4$. 

We can apply the claim we proposed to create a large subset of $\Gamma_q$ if the claim is true, by this subset we can ensure the proposition 2.2 is hold.

\section{Preliminary of combinatorics}
We proposed the following bounded generation result over $SL_2($Z$)\times SL_2($Z$)\times SL_2($Z$)$.\
\begin{proposition}
For any $0<\delta<\frac{1}{40}$,there exists $\varepsilon=\varepsilon(\delta)>0$ and an absolute constant $k\in N$ if the following condition satisfied:\\       
Let $A\subset \mathrm{SL}_2(\mathbb{Z}/q_1\mathbb{Z}) \times \mathrm{SL}_2(\mathbb{Z}/q_2\mathbb{Z}) \times \mathrm{SL}_2(\mathbb{Z}/q_3\mathbb{Z})$ be symmetric and $\lvert A \rvert>(q_1q_2q_3)^{3-\delta}$. Then there exists  $q_1^{\prime}\|q_1$,$q_2^{\prime}\| q_2$,$q_3^{\prime}\| q_3$,$q_1^{\prime}q_2^{\prime}q_3^{\prime}<(q_1q_2q_3)^{96\delta}$ so that
$$
A^{7200}\supset \mathrm{SL}_2(q_1^{\prime}\mathbb{Z}/q_1\mathbb{Z}) \times \mathrm{SL}_2(q_2^{\prime}\mathbb{Z}/q_2\mathbb{Z}) \times \mathrm{SL}_2(q_3^{\prime}\mathbb{Z}/q_3\mathbb{Z})
$$  
\end{proposition}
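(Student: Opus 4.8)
The plan is to reduce the multi-component statement to the single-component bounded generation theorem over $\mathrm{SL}_2(\mathbb{Z}/q\mathbb{Z})$ that underlies the work in \cite{tang2023super}, and then glue. \textbf{Step 1: local-to-single-factor reduction.} Given $A$ symmetric with $|A| > (q_1q_2q_3)^{3-\delta}$, pigeonhole over the fibers of the projection $\mathbb{P}_1$ (composed with reduction mod $q_1$) to find a fiber of comparable size; this shows $|\mathbb{P}_1(A)|$ is large, in fact $|\mathbb{P}_1(A)| > q_1^{1-O(\delta)}$ after iterating, and similarly for $\mathbb{P}_2,\mathbb{P}_3$. Apply the known single-factor bounded generation result (the $\mathrm{SL}_2$ analogue, i.e.\ the statement with one factor, which is essentially Lemma/Prop of \cite{tang2023super} built on Helfgott and the structure of $\mathrm{SL}_2(\mathbb{Z}/q\mathbb{Z})$): for a bounded power $A^{k_0}$ we get $\mathbb{P}_i(A^{k_0}) \supset \mathrm{SL}_2(q_i'\mathbb{Z}/q_i\mathbb{Z})$ for some $q_i' \| q_i$ with $q_i'$ small, $q_i' < q_i^{O(\delta)}$.

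\textbf{Step 2: promote control of one projection to control of a coset in the product.} Once $\mathbb{P}_1(A^{k_0}) \supset \mathrm{SL}_2(q_1'\mathbb{Z}/q_1\mathbb{Z})$, I want a subset $B \subset A^{k_1}$ on which $\mathbb{P}_1$ is surjective onto this congruence subgroup while $B$ is still large in the other two factors. This is exactly the setup of the gluing discussion in the sketch: build a connecting map $\psi: \mathrm{SL}_2(q_1'\mathbb{Z}/q_1\mathbb{Z}) \to B$ with $\mathbb{P}_1 \circ \psi = \mathrm{id}$, and run the dichotomy via Proposition 3.1 of \cite{tang2023super} with a parameter $\theta$: either $\psi$ fails to be a near-homomorphism on some large exact divisor $q_4$ of $q_2$ — in which case conjugating the "defect" $\psi(xy)\psi(x)^{-1}\psi(y)^{-1}$ by $\mathbb{P}_2(B) \supset \mathrm{SL}_2(q_2'\mathbb{Z}/q_2\mathbb{Z})$ (using that $\mathrm{SL}_2$ over $\mathbb{Z}/p^n\mathbb{Z}$ has no proper large normal-closure-free subsets, i.e.\ the normal generation / commutator-width facts for $\mathrm{SL}_2(\mathbb{Z}/q\mathbb{Z})$) produces a large subset of $A^{k_2}$ with first component $1$ and full second component — or $\psi$ is a near-homomorphism on a large piece, and then either $\mathbb{P}_1 \circ \psi$ is trivial mod $q_4^{\{\theta\}}$ (iterate commutators of $\psi(\mathrm{SL}_2(q_1'\mathbb{Z}/q_1\mathbb{Z}))$ to get a set with second component $1$) or it is nontrivial mod $q_4^{\{\theta/2\}}$ (construct a one-parameter unipotent subgroup $\mathcal{P}$ inside the relevant product and use it to sweep out all primes dividing $q_1q_4$). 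In every branch we land a large subset of $\mathrm{SL}_2(q_1'\mathbb{Z}/q_1\mathbb{Z}) \times \mathrm{SL}_2(q_4'\mathbb{Z}/q_4\mathbb{Z}) \times \mathrm{SL}_2(q_3'\mathbb{Z}/q_3\mathbb{Z})$ inside a bounded power of $A$, with all level parameters $q_i'$ still of size $q_i^{O(\delta)}$.

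\textbf{Step 3: iterate over the three factors and assemble.} Having glued factors $1$ and $2$, repeat the same gluing argument to absorb the third factor, then a final pass to tighten: the congruence subgroup $\mathrm{SL}_2(q_1'\mathbb{Z}/q_1\mathbb{Z}) \times \mathrm{SL}_2(q_2'\mathbb{Z}/q_2\mathbb{Z}) \times \mathrm{SL}_2(q_3'\mathbb{Z}/q_3\mathbb{Z})$ is normal in the ambient product with abelian-ish quotient controlled by $q_1'q_2'q_3'$, and each gluing step costs only a bounded factor in the exponent of $A$ and a polynomial-in-$\delta$ loss in the level. Tracking constants, one arranges $q_1'q_2'q_3' < (q_1q_2q_3)^{96\delta}$ and the total number of multiplications stays below the absolute bound $7200$; choosing $\varepsilon = \varepsilon(\delta)$ small enough to run all the pigeonhole and Proposition 3.1 invocations closes the argument.

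\textbf{Main obstacle.} The delicate point is Step 2 in the regime where $q_1,q_2,q_3$ share prime factors: the exact divisors produced at each stage are only controlled with respect to the relevant modulus, and one must ensure that the $q_i'$ obtained for the three factors are \emph{mutually compatible} (e.g.\ that gluing factor $2$ does not blow up the level already fixed for factor $1$) while keeping the product $q_1'q_2'q_3'$ below $(q_1q_2q_3)^{96\delta}$. Concretely, the hard part is the bookkeeping in the one-parameter-subgroup subcase, where capturing "all prime divisors of $q_1$ and $q_4$" simultaneously requires that the unipotent sweep $\mathcal{P}$ interacts correctly with the CRT decomposition over primes dividing $\gcd(q_1,q_4)$; this is where the bound on $k$ (the exponent $7200$) is really spent, and where one must be careful that the constant does not depend on $q$.
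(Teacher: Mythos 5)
Your proposal is structurally different from the paper's proof of this proposition, and it has a genuine gap that I do not see how to repair. The paper proves Proposition 3.1 directly, working simultaneously in all three factors: it pigeonholes $A\cdot A^{-1}$ into unipotent upper- and lower-triangular sets and a large Borel-type set $H_0$, uses the conjugation identity $\begin{pmatrix}\lambda & *\\0&\lambda^{-1}\end{pmatrix}\begin{pmatrix}1&m\\0&1\end{pmatrix}\begin{pmatrix}\lambda & *\\0&\lambda^{-1}\end{pmatrix}^{-1}=\begin{pmatrix}1&\lambda^2 m\\0&1\end{pmatrix}$ in each of the three components at once, and then closes with an additive-combinatorics lemma (Lemma 3.2 of this paper) about $\sum_k AB-AB$ for subsets of $\mathbb{Z}/q_1\mathbb{Z}\times\mathbb{Z}/q_2\mathbb{Z}\times\mathbb{Z}/q_3\mathbb{Z}$; a Bruhat-type $a_1b_1a_2b_2a_3$ decomposition of congruence subgroups then assembles the unipotent pieces into a full congruence subgroup. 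Lemma 3.2 is where the whole multi-component difficulty is absorbed, and it in turn leans on the single- and two-factor additive lemmas (3.13, 3.15) of \cite{tang2023super}. Nothing in this proof invokes the gluing dichotomy or connecting maps.

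Your Step 1 already fails quantitatively: from $|A|>(q_1q_2q_3)^{3-\delta}$ you cannot conclude $|\mathbb{P}_1(A)|>q_1^{3-O(\delta)}$ by pigeonhole. Projection only gives $|\mathbb{P}_1(A)|\geq |A|/(q_2q_3)^3 > q_1^3(q_1q_2q_3)^{-\delta}$, and when $q_2q_3\gg q_1$ the loss $(q_1q_2q_3)^{-\delta}$ is not a power of $q_1$ alone, so the single-factor bounded-generation input does not apply. The phrase ``after iterating'' does not supply the needed argument. Worse, your Step 2 imports the gluing dichotomy machinery of Section 6, but that machinery is designed for extending a set that already covers $\pi_{q_1,q_2,q_3}(B^{7200})\supset$ a congruence subgroup to a larger, \emph{coprime} modulus $q_4$; in the paper, Proposition 6.2 explicitly invokes Proposition 3.1 (the very statement under proof here) as the first step to produce the connecting map $\psi$. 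Using that machinery to prove Proposition 3.1 is circular: the dichotomy lemma (Proposition 6.1) alone gives no purchase without first establishing that a bounded power of $A$ contains a product of congruence subgroups, which is precisely the content of Proposition 3.1. The correct route is the direct Helfgott-style argument on the triple product, backed by the additive Lemma 3.2, not a factor-by-factor gluing.
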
 
To proposed this proposition we adopt ideas from Helfgott's approaches which solve the generation problem of $\mathrm{SL}_2(F_p)$ and some lemmas proved in \cite{tang2023super}.\\

\begin{lemma}
    Let $0<\delta<\frac{1}{20}$ and let $A,B\in \mathbb{Z}/q_1\mathbb{Z}\times \mathbb{Z}/q_2\mathbb{Z}\times \mathbb{Z}/q_3\mathbb{Z}$ such that $\lvert A \rvert,\lvert B \rvert>(q_1q_2q_3)^{1-\delta}$. Then we can obtain $q_1^{'}\lvert q_1$,$q_2^{'}\lvert  q_2$,$q_3^{'}\lvert q_3$,$q_1^{'}q_2^{'}q_3^{'}<(q_1q_2q_3)^{24\delta}$ so that
    \begin{equation}
    \sum_{240}AB-AB \supset q_1^{\prime}\mathbb{Z}/q_1\mathbb{Z}\times q_2^{\prime}\mathbb{Z}/q_2\mathbb{Z}\times q_3^{\prime}\mathbb{Z}/q_3\mathbb{Z}   
    \end{equation}
\end{lemma}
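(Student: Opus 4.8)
\emph{Proof plan.} The plan is to use the Chinese Remainder Theorem to reduce the statement to one prime at a time, and on each prime to run a sum--product argument in $\mathbb{Z}/p^a\mathbb{Z}$ of the kind already available in \cite{tang2023super} and \cite{BV12}, collecting the unavoidable losses into the small divisors $q_1',q_2',q_3'$.

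First I would write $\mathbb{Z}/q_i\mathbb{Z}\cong\bigoplus_{p}\mathbb{Z}/p^{a_i(p)}\mathbb{Z}$ with $p^{a_i(p)}\|q_i$ (allowing $a_i(p)=0$), so that the ambient ring is $R=\bigoplus_{p}R_p$ with $R_p=\bigoplus_{i=1}^{3}\mathbb{Z}/p^{a_i(p)}\mathbb{Z}$. Every ideal of $R$ is a direct sum of ideals of the $R_p$, and the image of $\sum_{240}(AB-AB)$ in $R_p$ is exactly $\sum_{240}(A_pB_p-A_pB_p)$, where $A_p,B_p$ denote the images of $A,B$. Hence it suffices to produce, for each $p$, an ideal $J_p\subseteq R_p$ with $J_p\subseteq\sum_{240}(A_pB_p-A_pB_p)$ and $\prod_p[R_p:J_p]<(q_1q_2q_3)^{24\delta}$; the divisors $q_i'$ are then read off coordinatewise from the $J_p$. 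To do this I would fix a threshold and call a coordinate $i$ at a prime $p$ \emph{degenerate} when the relevant projection of $A$ or $B$ to $\mathbb{Z}/p^{a_i(p)}\mathbb{Z}$ is too small; on degenerate coordinates $J_p$ simply takes the zero component there. On the remaining (``good'') coordinates the point is that $A_p$ is large enough in $R_p$ that, after pigeonholing over the other two coordinates, $A_p-A_p$ contains a ``pure slice'' $(0,\dots,D_i,\dots,0)$ with $D_i\subseteq\mathbb{Z}/p^{a_i(p)}\mathbb{Z}$ symmetric, containing $0$, and large; and since $(A_p-A_p)\cdot B_p\subseteq A_pB_p-A_pB_p$, multiplying such a slice by $B_p$ keeps it pure and produces $(0,\dots,D_iB_p^{(i)},\dots,0)\subseteq A_pB_p-A_pB_p$, where $B_p^{(i)}$ is the $i$-th projection of $B_p$.

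The heart of the matter is then the one-variable claim: if $D\subseteq\mathbb{Z}/p^a\mathbb{Z}$ is a large symmetric set containing $0$ and $E\subseteq\mathbb{Z}/p^a\mathbb{Z}$ is large, then $\sum_{80}(DE-DE)$ contains $p^{v}\mathbb{Z}/p^a\mathbb{Z}$, where $p^v$ is the smallest $p$-valuation occurring in $DE$. I would prove this by dividing out $p^{v}$ to reduce to the case that $DE$ meets the units; reducing modulo $p$, where (after peeling off one more power of $p$ at each level where a set happens to be trapped in a single residue class mod $p$) the images $\overline D,\overline E$ are large enough that $|\overline D|\,|\overline E|\gg p$ and the Glibichuk--Konyagin sum--product bound over $\mathbb{F}_p$ gives $\sum_{O(1)}(\overline D\,\overline E-\overline D\,\overline E)=\mathbb{F}_p$; and then lifting this statement back up the filtration $p\mathbb{Z}/p^a\mathbb{Z}\supset p^{2}\mathbb{Z}/p^a\mathbb{Z}\supset\cdots$ by a Hensel/Nakayama-type induction. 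Granting this, $\sum_{80}(A_pB_p-A_pB_p)$ contains each pure slice $(0,\dots,p^{v_i}\mathbb{Z}/p^{a_i(p)}\mathbb{Z},\dots,0)$, and summing the (at most three) of them gives $J_p:=\bigoplus_i p^{v_i}\mathbb{Z}/p^{a_i(p)}\mathbb{Z}\subseteq\sum_{240}(A_pB_p-A_pB_p)$ with $[R_p:J_p]=\prod_i p^{v_i}$. Taking $q_i':=\prod_p p^{v_i(p)}$ (with $v_i(p)=a_i(p)$ on degenerate coordinates) then gives the $q_i'$.

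I expect the genuine difficulty to be the accounting, and to dictate the particular constants $240$ and $24\delta$. The threshold defining ``degenerate'' has to be chosen so that, on the one hand, the total degenerate part $q_1'q_2'q_3'=\prod_{p}\prod_i p^{v_i(p)}$ stays below $(q_1q_2q_3)^{24\delta}$ — a naive fixed-density threshold only yields a fixed power of $q_1q_2q_3$, so one must argue more carefully, balancing the contribution of genuinely small projections against the valuation losses $p^{v_i}$ peeled off in the good case — and, on the other hand, coarse enough that on good coordinates the sum--product input above actually applies with a bounded number of summands. Making these two requirements compatible, uniformly in $a$, and keeping the total number of summands at $240$, is the main obstacle; the one-variable $\mathbb{Z}/p^a\mathbb{Z}$ sum--product step itself I would hope to import essentially verbatim from \cite{tang2023super} (or extract from \cite{BV12}), so that the new content is really the Chinese Remainder bookkeeping together with the pure-slice reduction that turns the product-ring statement into finitely many one-variable ones.
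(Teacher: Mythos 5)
The paper does not go prime-by-prime at the top level; it goes coordinate-by-coordinate. It fixes one fiber of the projection $P_1$ to the first factor (possible since $|A|>(q_1q_2q_3)^{1-\delta}$), obtaining $A'\subset A-A$ and $B'\subset B-B$ supported in $\{0\}\times\mathbb{Z}/q_2\mathbb{Z}\times\mathbb{Z}/q_3\mathbb{Z}$ and still large relative to $q_2q_3$; it then applies the \emph{two-variable} sum-product lemma (Lemma 3.15 of \cite{tang2023super}) to produce $q_2'\mathbb{Z}/q_2\mathbb{Z}\times q_3'\mathbb{Z}/q_3\mathbb{Z}$ in the second and third coordinates. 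The first coordinate is handled by a case split on $\alpha$ where $q_1=(q_2q_3)^\alpha$: if $\alpha$ is small, set $q_1'=q_1$ outright and absorb it into the $(q_1q_2q_3)^{24\delta}$ budget; if $\alpha$ is not small, the fibered set in the first coordinate is dense enough (exponent $>3/4$) to invoke the one-variable Lemma 3.13 of \cite{tang2023super}. The two resulting slices are added. All CRT accounting is buried inside those two cited lemmas, which are stated for composite moduli and do the prime-level bookkeeping internally.

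Your proposal instead does CRT first and tries to work one prime at a time, pigeonholing twice inside each $R_p=\prod_i\mathbb{Z}/p^{a_i(p)}\mathbb{Z}$ to get three pure slices, and then appealing to a one-variable $\mathbb{Z}/p^a\mathbb{Z}$ result for each. This has a genuine gap which you yourself flag but do not resolve, and it is not a bookkeeping detail. The hypothesis $|A|>Q^{1-\delta}$ with $Q=q_1q_2q_3$ only gives $|A_p|>|R_p|\,Q^{-\delta}$ for the local image, which is vacuous whenever $|R_p|\le Q^{\delta}$. There is no control on the product of $|R_p|$ over such ``degenerate'' primes: if $Q$ is squarefree with many small prime factors, essentially all of $Q$ can consist of primes with $p<Q^{\delta}$, and then your scheme forces $q_1'q_2'q_3'$ up to nearly $Q$, not $Q^{24\delta}$. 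Moreover, even at ``good'' primes, pigeonholing over two of the three coordinates of $R_p$ to extract a pure slice needs $|A_p|$ to exceed the product of the other two factors of $R_p$, which is again not implied by $|A_p|>|R_p|\,Q^{-\delta}$ when $Q^{\delta}$ is comparable to one of the local factors. The paper's fibering over a whole coordinate (and not over a prime) avoids both problems: the fiber size is controlled by the global density $Q^{-\delta}$, and only a single global case split on $\alpha$ is needed to keep the lost divisor small. What you are implicitly missing is exactly the multi-variable composite-modulus machinery of Lemmas 3.13 and 3.15 in \cite{tang2023super}, which the paper imports as a blackbox; re-deriving it via naive CRT plus pure slices does not close.
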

\begin{proof}
Let $P_i,i=1,2,3$ be the projection from $\mathbb{Z}/q_1\mathbb{Z}\times \mathbb{Z}/q_2\mathbb{Z}\times \mathbb{Z}/q_3\mathbb{Z}$ to the $i$-th component.Without loss of generality, We can assume $q_1=(q_2q_3)^\alpha$,$0<\alpha \leq 1$. Since $\lvert A \rvert>(q_1q_2q_3)^{1-\delta}$,we can define a subset of A with a fixed $x_0\in A$ satisfied the following condition:
$$
\lvert\{x\in A:P_1(x)=x_0\}\rvert
> (q_2q_3)^{1-\delta-\alpha\delta}
$$
which implies a subset $A^{\prime}\in A-A$,$\lvert A^{\prime} \rvert> (q_2q_3)^{1-\delta-\alpha\delta}$,$P_1(A^{\prime})=0$,similarly, there is $B^{\prime}\in B-B$,$\lvert B^{\prime} \rvert> (q_2q_3)^{1-\delta-\alpha\delta}$,$P_1(B^{\prime})=0$, apply the lemma 3.15 proved in \cite{tang2023super}, we obtain $q_2^{\prime}\lvert  q_2$,$q_3^{\prime}\lvert q_3$,$(q_2^{\prime}q_3^{\prime})>q_2q_3^{10(\delta+\alpha\delta)}$ such that
\begin{equation}
\sum_{192}AB-AB \supset \sum_{96}A^{\prime}B^{\prime}-A^{\prime}B^{\prime}\supset \mathbb{Z}/q_1\mathbb{Z}\times \mathbb{Z}/q_2\mathbb{Z}\times \mathbb{Z}/q_3\mathbb{Z}    
\end{equation}
If $\alpha<5\delta$, we can just take $q_1^{\prime}=q_1$.If $\alpha>5\delta$,then there are $A^{\prime\prime}$ and $B^{\prime\prime}$,$\lvert A^{\prime\prime}\rvert>q_1^{1-\delta-\frac{\delta}{\alpha}}$,$\lvert B^{\prime\prime} \rvert>q_1^{1-\delta-\frac{\delta}{\alpha}}$,$P_2(B^{'})=P_3(B^{\prime})=P_2(A^{\prime})=P_3(A^{\prime})=0$,the exponent $1-\delta-\frac{\delta}{\alpha}$ exceed $\frac{3}{4}$,so we applying the lemma 3.13 in \cite{tang2023super}. we obtain $q_1^{\prime}\lvert q_1$,$q_1^{\prime}<q_1^{\frac{12(\alpha+\frac{\delta}{\alpha}}{5})}$ so that
\begin{equation}
\sum_{48}AB-AB \supset \sum_{24}A^{\prime}B^{\prime}-A^{\prime}B^{\prime}\supset q_1^{\prime}Z/q_1{Z}\times q_2Z/q_2{Z}\times q_3Z/q_3{Z}    
\end{equation}
Adding (3.4) and (3.5), we obtain  with 
\[
    q_1^{\prime}q_2^{\prime}q_3^{\prime}<q_1^{\frac{12(\alpha+\frac{\delta}{\alpha}}{5})}q_2q_3^{10(\delta+\alpha\delta)}<(q_1q_2q_3)^{24\delta}
\]
\end{proof}
\begin{proof}
Since $\lvert A \rvert>(q_1q_2q_3)^{3-\delta}$,by the pigeon holes principle, we can obtain a set 
\begin{flalign*}
&\{(\gamma_1,\gamma_2,\gamma_3):\gamma_1 \text{has lower row} \Vec{u}, \gamma_2 \text{has lower row} \Vec{v},\gamma_3 \text{has lower row} \Vec{w}\}    
\end{flalign*}
Where $\Vec{u},\Vec{v},\Vec{w}$ satisfied
\begin{align*}
    \Vec{u}\in (Z/q_1{Z})^2\\
    \Vec{v}\in (Z/q_2{Z})^2\\
    \Vec{w}\in (Z/q_3{Z})^2
\end{align*}
whose cardinal exceeds $(q_1q_2q_3)^{1-\delta}$, so the cardinal of the set
$$
A_1:=A \cdot A^{-1} \cap\left\{\left(\left(\begin{array}{cc}
1 & m \\
0 & 1
\end{array}\right),\left(\begin{array}{cc}
1 & n \\
0 & 1
\end{array}\right),\left(\begin{array}{cc}
1 & t \\
0 & 1
\end{array}\right)\right):\right\}
$$
where
\begin{align*}
&m \in \mathbb{Z} / q_1 \mathbb{Z}\\&n \in \mathbb{Z} / q_2 \mathbb{Z}\\&t\in \mathbb{Z} / q_3 \mathbb{Z}    
\end{align*}
exceeds $(q_1q_2q_3)^{1-\delta}$.
Similarly, we can get the set
$$
A_1:=A \cdot A^{-1} \cap\left\{\left(\left(\begin{array}{cc}
1 & 0 \\
m & 1
\end{array}\right),\left(\begin{array}{cc}
1 & 0 \\
n & 1
\end{array}\right),\left(\begin{array}{cc}
1 & 0 \\
t & 1
\end{array}\right)\right)\right\}
$$
where
\begin{align*}
&m \in \mathbb{Z} / q_1 \mathbb{Z}\\&n \in \mathbb{Z} / q_2 \mathbb{Z}\\&t\in \mathbb{Z} / q_3 \mathbb{Z}    
\end{align*}
Define an equivalence relation $\sim$ on   $\mathrm{SL}_2(\mathbb{Z}/q_1\mathbb{Z}) \times \mathrm{SL}_2(\mathbb{Z}/q_2\mathbb{Z}) \times \mathrm{SL}_2(\mathbb{Z}/q_3\mathbb{Z})$ as
\[
(\gamma_1,\gamma_2,\gamma_3) \backsim (\gamma_1^{\prime},\gamma_2^{\prime},\gamma_3^{\prime})
\]
if and only if the second rows of each $\gamma_i,\gamma_i^{\prime}$, $i=1,2,3$ are up to 
a scalar in $(Z/q_i{Z})^{\ast}$,$i=1,2,3$,there are at most $q_1q_2q_3$ many such classes.So by the pigeon hole principle,  we can deduce the existence of a class containing at least $(q_1q_2q_3)^{2-\delta}$ many elements from $A$.
This implies the set
\begin{align*}
H_0=A \cdot A^{-1} \cap\left\{\left(\left(\begin{array}{cc}
\lambda_1 & x \\
0 & \lambda_1^{-1}
\end{array}\right),\left(\begin{array}{cc}
\lambda_2 & y \\
0 & \lambda_2^{-1}
\end{array}\right),\left(\begin{array}{cc}
\lambda_3 & z \\
0 & \lambda_3^{-1}
\end{array}\right)\right)\right\}    
\end{align*}
where satisfied the following condition
\begin{align*}
&\lambda_1 \in\left(\mathbb{Z} / q_1 \mathbb{Z}\right)^*, x \in \mathbb{Z} / q_1 \mathbb{Z},\\&\lambda_2 \in\left(\mathbb{Z} / q_2 \mathbb{Z}\right)^*, y \in \mathbb{Z} / q_2 \mathbb{Z},\\&\lambda_3 \in\left(\mathbb{Z} / q_3 \mathbb{Z}\right)^*, z \in \mathbb{Z} / q_3 \mathbb{Z}   
\end{align*}
has cardinal exceed $(q_1q_2q_3)^{2-\delta}$,
By the pigeon hole principle,there are $x_0\in Z/q_1{Z},y_0\in Z/q_2{Z},z_0\in Z/q_3{Z}$ so that
$$
H_0=A \cdot A^{-1} \cap\left\{\left(\left(\begin{array}{cc}
\lambda_1 & 0 \\
x_0 & \lambda_1^{-1}
\end{array}\right),\left(\begin{array}{cc}
\lambda_2 & 0 \\
y_0 & \lambda_2^{-1}
\end{array}\right),\left(\begin{array}{cc}
\lambda_3 & 0 \\
z_0 & \lambda_3^{-1}
\end{array}\right)\right)\right\}
$$
where the parameters satisfied:
\begin{align*}
&\lambda_1 \in\left(\mathbb{Z} / q_1 \mathbb{Z}\right)^*, x_0 \in \mathbb{Z} / q_1 \mathbb{Z},\\&\lambda_2 \in\left(\mathbb{Z} / q_2 \mathbb{Z}\right)^*, y_0 \in \mathbb{Z} / q_2 \mathbb{Z},\\&\lambda_3 \in\left(\mathbb{Z} / q_3 \mathbb{Z}\right)^*, z_0 \in \mathbb{Z} / q_3 \mathbb{Z}   
\end{align*}
has cardinal  $>(q_1q_2q_3)^{1-\delta}$. 
It's easy to compute the following result
\[
\begin{pmatrix}
    \lambda_1 & x_0 \\ 0 & \lambda_1^{-1}
\end{pmatrix} \begin{pmatrix}
    1 & m \\ 0 & 1
\end{pmatrix} \begin{pmatrix}
    \lambda_1 & x_0 \\ 0 & \lambda_1^{-1}
\end{pmatrix}^{-1} = \begin{pmatrix}
    1 & \lambda_1^{2}m \\ 0 & 1
\end{pmatrix}
\]
Similarly,
\[
\begin{pmatrix}
    \lambda_2 & y_0 \\ 0 & \lambda_2^{-1}
\end{pmatrix} \begin{pmatrix}
    1 & n \\ 0 & 1
\end{pmatrix} \begin{pmatrix}
    \lambda_2 & y_0 \\ 0 & \lambda_2^{-1}
\end{pmatrix}^{-1} = \begin{pmatrix}
    1 & \lambda_2^{2}n \\ 0 & 1
\end{pmatrix}
\]
\[
\begin{pmatrix}
    \lambda_3 & z_0 \\ 0 & \lambda_3^{-1}
\end{pmatrix} \begin{pmatrix}
    1 & t \\ 0 & 1
\end{pmatrix} \begin{pmatrix}
    \lambda_3 & z_0 \\ 0 & \lambda_3^{-1}
\end{pmatrix}^{-1} = \begin{pmatrix}
    1 & \lambda_3^{2}t \\ 0 & 1
\end{pmatrix}
\]
Applying lemma to the set
$$
\left\{\left(\lambda_1^{2},\lambda_2^{2},\lambda_3^{2}\right):\left(\begin{pmatrix}
    \lambda_1 & x \\ 0 & \lambda_1^{-1}
\end{pmatrix},\begin{pmatrix}
    \lambda_2 & y \\ 0 & \lambda_2^{-1}
\end{pmatrix},\begin{pmatrix}
    \lambda_3 & z \\ 0 & \lambda_3^{-1}
\end{pmatrix}\right) \in H \right\}
$$
and
$$
\left\{\left(m,n,t\right):\left(\begin{pmatrix}
    1 & m \\ 0 & 1
\end{pmatrix},\begin{pmatrix}
    1 & n \\ 0 & 1
\end{pmatrix},\begin{pmatrix}
    1 & t \\ 0 & 1
\end{pmatrix}\right) \in A_1 \right\}
$$
with exponent $1-2\delta$,then we have $Q_1\lvert q_1$,$Q_2\lvert q_2$,$Q_3\lvert q_3$,$Q_1Q_2Q_3<(q_1q_2q_3)^{48\delta}$ and
$$
\left(A \cdot A^{-1}\right)^{720} \supset \left\{ \left( \begin{pmatrix}  
    1 & Q_1\mathbb{Z}/q_1\mathbb{Z} \\ 0 & 1  
\end{pmatrix}, \begin{pmatrix}  
    1 & Q_2\mathbb{Z}/q_2\mathbb{Z} \\ 0 & 1  
\end{pmatrix}, \begin{pmatrix}  
    1 & Q_3\mathbb{Z}/q_3\mathbb{Z} \\ 0 & 1  
\end{pmatrix} \right) \right\} 
$$
Similarly, we can obtain $Q_1^{\prime}\lvert q_1$,$Q_2^{\prime}\lvert q_2$,$Q_3^{\prime}\lvert q_3$,$Q_1^{\prime}Q_2^{\prime}Q_3^{\prime}<(q_1q_2q_3)^{48\delta}$ and
$$
\left(A \cdot A^{-1}\right)^{720} \supset \left\{ \left( \begin{pmatrix}  
    1 & 0 \\ Q_1^{\prime}\mathbb{Z}/q_1\mathbb{Z} & 1  
\end{pmatrix}, \begin{pmatrix}  
    1 & 0 \\ Q_2^{\prime}\mathbb{Z}/q_2\mathbb{Z} & 1  
\end{pmatrix}, \begin{pmatrix}  
    1 & 0 \\ Q_3^{\prime}\mathbb{Z}/q_3\mathbb{Z} & 1  
\end{pmatrix} \right) \right\} 
$$
Let $Q_1^{*}=lcm(Q_1,Q_1^{\prime})$,$Q_2^{*}=lcm(Q_2,Q_2^{\prime})$,$Q_3^{*}=lcm(Q_3,Q_3^{\prime})$. It is easy to check that for $m<n$, any element of the group
\[
\begin{pmatrix}
a & b \\ c & d
\end{pmatrix} \supset SL_2(Z):
a,d \equiv 0 (\bmod p^{min\{2m,n\}}),b,c \equiv 0 (\bmod p^{min\{2m,n\}})
\]
can be written as $a_1b_1a_2b_2a_3$, where $a_1,a_2,a_3 \in \begin{pmatrix}
    1 & p^{m}Z/p^{n}Z \\0 & 1
\end{pmatrix}$ and $b_1,b_2 \in \begin{pmatrix}
    1 & 0 \\p^{m}Z/p^{n}Z & 1
\end{pmatrix}$. From this it follows that if we let $q_1^{\prime}=\operatorname{gcd}((Q_1^{*})^2,q_1)$,$q_2^{\prime}=\operatorname{gcd}((Q_2^{*})^2,q_2)$,$q_3^{\prime}=\operatorname{gcd}((Q_3^{*})^2,q_3)$then
\[
(A\cdot A^{-1})^{3600}\supset \Lambda(q_1^{\prime})/\Lambda(q_1)\times \Lambda(q_2^{\prime})/\Lambda(q_2)\times \Lambda(q_3^{\prime})/\Lambda(q_3)
\]
\end{proof}

\section{Random walks}
Given a finitely symmetric set $S$ on $\Gamma$ such that $\langle S \rangle$ is Zariski-dense, and $\chi_S$ is the uniform probability measure supported on S, we proposed the following propositions, which provide a quantitative version of non-concentration of self-convolutions of $\chi_S$in proper sub-varieties. Before the discussion begins, we preface a primitive linear form.
\begin{align*}
&L\left(\begin{pmatrix}
    x_1 & y_1 \\ z_1 & w_1
\end{pmatrix},\begin{pmatrix}
    x_2 & y_2 \\ z_2 & w_2
\end{pmatrix},\begin{pmatrix}
    x_3 & y_3 \\ z_3 & w_3
\end{pmatrix}\right)=X_1x_1+Y_1y_1+Z_1z_1+W_1w_1\\&+X_2x_2+Y_2y_2+Z_2z_2+W_2w_2+X_3x_3+Y_3y_3+Z_3z_3+W_3w_3    
\end{align*}
i.e.,$gcd(X_1,Y_1,Z_1,W_1,X_2,Y_2,Z_2,W_2,X_3,Y_3,Z_3,W_3)=1$
\begin{proposition}
Let S is a finitely symmetric set on $\Gamma$ such that $\langle S \rangle$ is Zariski-dense, and $\chi_S$ is the uniform probability measure supported on S. Then we can have a constant $c>0$ such that for $Q \in Z_+$, for any $l>logQ$ and $n \in Z$, we have
\[
\pi_Q^{*}[\chi_S^{(l)}](\{g\in SL_2(\mathbb{Z})\times SL_2(\mathbb{Z})\times SL_2(\mathbb{Z})|L(g)\equiv n(modQ)\})<Q^{-c}
\]    
\end{proposition}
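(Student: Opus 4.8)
The plan follows the template of the one-factor estimate in \cite{tang2023super} (ultimately Bourgain--Gamburd): remove the dependence on $l$, localise the short part of the walk at the archimedean place, and use Zariski-density of $G=\langle S\rangle$ in $\mathrm{SL}_2^3$ to see that the walk escapes the hyperplane sections. First I would remove $l$: put $l_{0}=\lceil\log Q\rceil$ and factor $\chi_S^{(l)}=\chi_S^{(l-l_0)}*\chi_S^{(l_0)}$, so that for $V=\{g:L(g)\equiv n\pmod Q\}$,
\[
\pi_Q^*[\chi_S^{(l)}](V)=\sum_{h\in\Gamma_Q}\pi_Q^*[\chi_S^{(l-l_0)}](h)\;\pi_Q^*[\chi_S^{(l_0)}](Vh^{-1}),
\]
and $Vh^{-1}=\{g:L(gh)\equiv n\}$ is the level set of a linear form again primitive modulo $Q$, since $h$ is invertible there. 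Hence it suffices to treat $l=l_0$, but uniformly over all forms primitive modulo $Q$ (equivalently, over the $\Gamma_Q\times\Gamma_Q$-orbit of $L$ under left/right translation); one may also take $Q$ larger than any prescribed constant, absorbing the exceptions.

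Next I would introduce a pivot. Let $M$ bound the matrix entries of the generators, put $m=\lfloor\log Q/(3\log(2M))\rfloor$, and split off inside the length-$l_0$ word a block of $m$ consecutive generators, $g=A\,w\,B$ with $w\sim\chi_S^{(m)}$ and $A,B$ independent of $w$. Conditioning on $A,B$ turns the quantity into $\pi_Q^*[\chi_S^{(m)}]$ of the level set of the (still primitive) form $L_{A,B}(x)=L(\pi_Q(A)\,x\,\pi_Q(B))$. Since $m$ lies below the injectivity radius --- two length-$m$ words agreeing in $\Gamma_Q$ have ratio in $\ker(\Gamma\to\Gamma_Q)$ with all $2\times2$-block entries $<Q$ in absolute value, hence equal to $1$ --- and since a length-$m$ word has entries $\le(2M)^m\le Q^{1/3}$, the congruence $L_{A,B}(w)\equiv n\pmod Q$ for $w\in S^{m}$ is equivalent to an integer equation $L_{A,B}(w)=N$ for $N$ in a set of size $O(Q^{1/3})$, each cutting $\mathrm{SL}_2^3$ in a proper hyperplane section of degree $\le 8$. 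Zariski-density of $G$ then gives escape from these --- $\chi_S^{(m)}(W)\le\lambda^{m}$ for a fixed $\lambda=\lambda(S)<1$, because the sections form a bounded-complexity family none of which contains $G$ --- and summing over the $N$'s bounds the quantity by $O\!\left(Q^{1/3}\lambda^{m}\right)$.

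The quantitative balance in this last step is the main obstacle, and a naive reading does not close it: $O(Q^{1/3})$ parallel sections against an escape gain $\lambda^{m}=Q^{\log\lambda/(3\log(2M))}$ produce a negative power of $Q$ only when $\lambda<1/(2M)$, which can fail for generating sets with large entries. Overcoming this requires a more robust escape mechanism --- for instance one handling the whole $\Gamma_Q\times\Gamma_Q$-orbit of $L$ at once, run through several nested pivot blocks at different scales rather than a single block, so that the large-norm forms produced by conditioning are traded back for bounded-norm ones --- and it must be carried out inside $\mathrm{SL}_2^3$: because $S$ need not be a product set one cannot simply condition on two of the three coordinates, so for each prime $p\mid Q$ one has to locate an $\mathrm{SL}_2$-factor on which $L$ is primitive modulo $p$, arrange the pivot block to move that factor (possible since each $\mathbb{P}_i(G)$ is Zariski-dense, whence $|\mathbb{P}_i(S)|\ge 2$), and glue the per-prime estimates by the Chinese Remainder Theorem. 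This coupling of a boostable escape estimate with the three-factor bookkeeping is exactly the part of \cite{tang2023super} that has to be adapted, and I expect essentially all of the difficulty to sit there; an equivalent packaging instead bounds the exponential sums $\bigl|\mathbb{E}_{g\sim\chi_S^{(l_0)}}\,e(aL(g)/q)\bigr|$ over $q\mid Q$ and reassembles them via $\sum_{q\mid Q}q^{1-c}=Q^{1-c+o(1)}$, but the analytic core is the same.
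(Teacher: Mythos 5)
Your opening step --- factoring $\chi_S^{(l)}=\chi_S^{(l-l_0)}*\chi_S^{(l_0)}$ with $l_0\asymp\log Q$ and absorbing the outer factor into a translate $L_{h}$, which stays primitive modulo $Q$ --- is exactly how the paper finishes its proof, so that reduction matches. You diverge on the core estimate for the length-$l_0$ walk. You take the classical Bourgain--Gamburd pivot-and-injectivity-radius route: cut out a block of $m\asymp\log Q/\log(2M)$ steps, observe that below the injectivity radius the mod-$Q$ congruence becomes an integer equation $L_{A,B}(w)=N$ over $O(Q^{1/3})$ values of $N$, and escape each hyperplane by Zariski density. You then correctly diagnose that the naive product $Q^{1/3}\lambda^m$ only yields a negative power of $Q$ when $\lambda<1/(2M)$, and you list possible repairs (multi-scale pivots, per-prime plus Chinese Remainder, exponential sums) without carrying any of them out. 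That balance is a genuine gap: nothing in your sketch gives the escape constant $\lambda(S)$ any leverage against the generator norm $M$, and the fixes remain unimplemented.

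The paper avoids that race entirely by a different mechanism, following \cite{tang2023super}. It first proves an escape estimate over $\mathbb{Z}$ --- $\chi_S^{(l)}(\{L(g)=0\})<e^{-cl}$ for $l\gg_S 1$ --- by appealing to the square-free spectral gap at a single auxiliary prime $p$ chosen in an interval calibrated to $l$, so that $\chi_S^{(l)}$ is already nearly flat on $\Gamma_p\cong\mathrm{SL}_2(\mathbb{F}_p)^3$ and the level set of $L$ mod $p$ has density $O(1/p)$. It then transfers this to $L(g)\equiv n\pmod Q$ for $l<c_1\log Q$ via the Effective Bezout Identity of Berenstein--Yger: pigeonhole an exact divisor $Q'\geq Q^{1/12}$ on which a fixed coefficient of $L$ is invertible, form the low-height linear system $\{f_\gamma=0:\gamma\in\mathcal G\}$ indexed by the walk support, and argue that it must have a common complex zero --- otherwise Bezout produces a nonzero integer $M$ with $\log M<C_S'\,l<\tfrac1{12}\log Q\leq\log Q'$ and yet $Q'\mid M$, a contradiction. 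Lifting that zero to a primitive integral form $\tilde L$ vanishing on the whole level set reduces to the $\mathbb{Z}$-escape estimate, and the same walk factorization you use then yields the proposition. This route trades the $Q^{1/3}$-versus-$\lambda^m$ balance for a Bezout height computation and in particular requires no relation between $\lambda$ and $M$. As written, your proposal's central step is missing; the Bezout mechanism is one concrete way to fill it.
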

\begin{proposition}
There is a constant $c$ such that the following holds. Let $Q \in \mathbb{Z}_{+}$large enough and $\xi=\left(\xi_1, \xi_2,\xi_3\right), \eta=\left(\eta_1, \eta_2,\eta_3\right) \gamma=\left(\gamma_1, \gamma_2\right)\in \operatorname{Mat}_2(\mathbb{Z}) \times \operatorname{Mat}_2(\mathbb{Z}) \times \operatorname{Mat}_2(\mathbb{Z})$ satisfy
\begin{equation}
\operatorname{Tr}\left(\xi_1\right)=\operatorname{Tr}\left(\xi_2\right)=\operatorname{Tr}\left(\xi_3\right)=\operatorname{Tr}\left(\eta_1\right)=\operatorname{Tr}\left(\eta_2\right)=\operatorname{Tr}\left(\eta_3\right)=0
\end{equation}
\begin{equation}
\pi_p\left(\xi_1\right), \pi_p\left(\xi_2\right),, \pi_p\left(\xi_3\right),\pi_p\left(\eta_1\right), \pi_p\left(\eta_2\right),\pi_p\left(\eta_3\right) \neq 0 \text { for every } p \mid Q,
\end{equation}
Then for $l>\log Q$,and Consider
$$
\kappa_i(g)=\operatorname{Tr}\left(g \xi_i g^{-1} \eta_i\right), g \in SL_2(\mathbb{Z}),i=1,2,3
$$
$$
\chi_S^{(l)}\left(\left\{\left(g_1, g_2,g_3\right) \in SL_2(\mathbb{Z}) \mid \kappa_1(g_1)+\kappa_2(g_2)+\kappa_3(g_3) \equiv 0(\bmod Q)\right\}\right)<Q^{-c}.
$$
\end{proposition}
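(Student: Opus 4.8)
The plan, following \cite{tang2023super}, is to prove the estimate first at a single base scale $l_{0}\asymp\log Q$ by an archimedean argument, and then to pass to general $l$ by a splitting argument exploiting the quadratic shape of the $\kappa_{i}$; the companion estimate for the primitive linear form $L$ comes out of the same scheme with the ``quadratic'' step omitted. Write $F(g_{1},g_{2},g_{3})=\kappa_{1}(g_{1})+\kappa_{2}(g_{2})+\kappa_{3}(g_{3})$. On $\mathrm{SL}_{2}$ the inverse equals the adjugate, so $\kappa_{i}(g)=\operatorname{Tr}(g\xi_{i}g^{-1}\eta_{i})$ is a polynomial of degree $2$ in the entries of $g$; the hypotheses $\operatorname{Tr}(\xi_{i})=\operatorname{Tr}(\eta_{i})=0$ and $\pi_{p}(\xi_{i}),\pi_{p}(\eta_{i})\neq 0$ for every $p\mid Q$ force $\xi_{i},\eta_{i}$ to be nonzero traceless matrices, and for such matrices $g\mapsto\operatorname{Tr}(g\xi_{i}g^{-1}\eta_{i})$ is non-constant (the conjugacy orbit of $\xi_{i}$ is a two-dimensional quadric spanning the space of traceless matrices, on which the nonzero linear form $M\mapsto\operatorname{Tr}(M\eta_{i})$ cannot be constant). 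Hence $F$ is non-constant on $\mathrm{SL}_{2}\times\mathrm{SL}_{2}\times\mathrm{SL}_{2}$, so for each integer $m$ the locus $W_{m}:=\{F=m\}$ inside the product is a proper subvariety whose degree is bounded by an absolute constant.

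\emph{Base scale.} Put $\Lambda_{0}=\max_{s\in S}\|s\|$ and $l_{0}=\lceil c_{2}\log Q\rceil$ with $c_{2}=c_{2}(\Lambda_{0})>0$ small. Every $g$ in the support of $\chi_{S}^{(l_{0})}$ has matrix entries of size $\Lambda_{0}^{l_{0}}=Q^{o(1)}$, so for $Q$ large $|F(g)|=o(Q)$ --- here it is relevant that the $\xi_{i},\eta_{i}$ occurring in applications are of bounded size --- whence on that support $F(g)\equiv 0\ (\mathrm{mod}\ Q)$ forces $F(g)=0$; thus
\[
\chi_{S}^{(l_{0})}\big(\{\,g:F(g)\equiv 0\ (\mathrm{mod}\ Q)\,\}\big)\ \le\ \chi_{S}^{(l_{0})}\big(W_{0}(\mathbb{R})\big).
\]
Since $\langle S\rangle$ is Zariski-dense in $\mathrm{SL}_{2}\times\mathrm{SL}_{2}\times\mathrm{SL}_{2}$, the standard escape-from-subvarieties estimate for Zariski-dense random walks --- uniform over proper subvarieties of bounded degree, and established for two factors in \cite{tang2023super} --- provides $c=c(S)>0$ with $\chi_{S}^{(l)}(W(\mathbb{R}))\le Ce^{-cl}$ for every such $W$; applying it to $W_{0}$ at $l=l_{0}$ gives $\chi_{S}^{(l_{0})}(\{F\equiv 0\ (\mathrm{mod}\ Q)\})\le Ce^{-cl_{0}}\le Q^{-c'}$.

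\emph{Passing to general $l$.} For $l$ beyond the base range one uses the bilinear structure of $F$: splitting the walk $\chi_{S}^{(l)}=\chi_{S}^{(m)}*\chi_{S}^{(m)}$ and writing $g=uv$,
\[
F(uv)=\sum_{i=1}^{3}\operatorname{Tr}\big((v_{i}\xi_{i}v_{i}^{-1})(u_{i}^{-1}\eta_{i}u_{i})\big),
\]
a bilinear pairing of the traceless triples $(v_{i}\xi_{i}v_{i}^{-1})_{i}$ and $(u_{i}^{-1}\eta_{i}u_{i})_{i}$. A Cauchy--Schwarz (Weyl-type differencing) in one half, exploiting that $F$ is quadratic in each factor, replaces each $\eta_{i}$ by a difference of its conjugates and leaves a sum of exactly the same shape at half the length (the ``$\xi$-part'' being kept throughout); iterating $O(\log l)$ times reduces the estimate to the base scale, and one concludes as above. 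This reproduces, in the three-factor setting, the scheme of \cite{tang2023super} up to bookkeeping.

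\emph{Main difficulty.} The one non-elementary ingredient is the escape-from-subvarieties estimate --- with an \emph{exponential} rate and \emph{uniform} over the bounded-degree family $\{W_{m}\}$ inside $\mathrm{SL}_{2}\times\mathrm{SL}_{2}\times\mathrm{SL}_{2}$. This is exactly where Zariski-density of the full group $\langle S\rangle$, rather than merely of its three coordinate projections, is used, and it is what lets the argument avoid any spectral-gap input for $\mathrm{SL}_{2}(\mathbb{Z}/q\mathbb{Z})\times\mathrm{SL}_{2}(\mathbb{Z}/q\mathbb{Z})\times\mathrm{SL}_{2}(\mathbb{Z}/q\mathbb{Z})$ (which is the content of the main theorem, hence unavailable here) --- and also dispense with any appeal to single-factor expansion, since the three projections of $\chi_{S}$ are coupled, being driven by the single set $S\subset\Gamma$. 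Finally, although the trace form on traceless matrices degenerates when $2\mid Q$, the non-constancy of $\kappa_{i}$ used above needs only $\xi_{i},\eta_{i}\neq 0$, so the case $p=2$ requires no separate treatment.
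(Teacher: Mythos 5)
The paper does not prove Proposition 4.2 directly; it proves Proposition 4.1 (the primitive linear form $L$) in detail and says the quadratic case runs along the same lines. The actual scheme is: (i) Lemma 4.4 (escape from a fixed proper subvariety over $\mathbb{Q}$, with exponential rate), proved by invoking Theorem 1.2 --- the square-free super approximation of Salehi-Golsefidy --- at a cleverly chosen prime modulus; (ii) Lemma 4.3 (the estimate for $1\ll_S l<c_1\log Q$), proved by the Effective Bezout Identity of \cite{BY91}: one treats the coefficients of the form as unknowns, sets up the linear/bilinear system $\{f_\gamma=0\}_{\gamma\in\mathcal{G}}$, and shows that either it has a rational common zero (so $\mathcal{G}$ lies in a proper subvariety and Lemma 4.4 applies) or Effective Bezout produces a nonzero integer $M\equiv 0\ (\mathrm{mod}\ Q')$ with $\log M< C'_S\, l$, contradicting $l< c_1\log Q$; (iii) splitting $\chi_S^{(l)}=\chi_S^{(l_0)}*\chi_S^{(l-l_0)}$ at $l_0=[c_1\log Q]$ and using translation invariance of the hypotheses.

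There is a genuine gap in your base-scale step. You argue that for $l_0\asymp\log Q$ small, the support of $\chi_S^{(l_0)}$ has entries of size $Q^{o(1)}$, so $|F(g)|=o(Q)$ and the congruence $F(g)\equiv 0\ (\mathrm{mod}\ Q)$ degenerates to $F(g)=0$. This requires a height bound on the data $\xi_i,\eta_i$, which is not part of the hypothesis --- the statement only asks that they be traceless and nonzero mod every $p\mid Q$, and they can be enormous. You flag this (``the $\xi_i,\eta_i$ occurring in applications are of bounded size''), but that is an extra assumption, not a repair: the $\xi,\eta$ fed in at the gluing stage (Lemma 6.25) are produced inside the argument and are not a priori bounded in terms of $Q$. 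The Effective Bezout route is precisely what removes this sensitivity to the archimedean size of the form's coefficients, because $\xi,\eta$ enter as unknowns whose heights never appear; only the heights of the walk elements $g$ do. Relatedly, your escape-from-subvarieties estimate is not ``free'' of spectral-gap input: the exponential rate in Lemma 4.4 is extracted from Theorem 1.2 applied to $T_p$ for a well-chosen prime, and your account should say this rather than suggest Zariski density alone suffices.

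Your passage to general $l$ by Weyl differencing is both heavier and riskier than what the paper does. In the quadratic case, $\kappa_i(g g')=\operatorname{Tr}\bigl(g\,(g'\xi_i g'^{-1})\,g^{-1}\eta_i\bigr)$, so conditioning on the tail $g'$ merely replaces $\xi_i$ by a conjugate, which preserves both $\operatorname{Tr}=0$ and nonvanishing mod $p$; one then applies the base estimate verbatim. Differencing, by contrast, replaces $\eta_i$ by a difference of conjugates $u_i^{-1}\eta_i u_i-(u_i')^{-1}\eta_i u_i'$, and this can vanish mod $p$ (or entirely), so the hypotheses of the base case are no longer guaranteed; you would need an additional non-degeneracy argument to rule this out, which is avoided altogether by the simpler translation.
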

This two proposition above have similar proof way,we have only focus on the proposition 4.1.
In order to prove this proposition, from the reference of \cite{tang2023super}, we have to prove the following lemma firstly:
\begin{lemma}
There are constants $c_1,c_2>0$ depending only on S such that for any $Q \in Z_+$,$n \in Z$ and for $l \in Z_+$ with $1\ll_S l<c_1logQ$, we have
\begin{equation}
\chi_S^{(l)}(\{g\in SL_2(\mathbb{Z})\times SL_2(\mathbb{Z})\times SL_2(\mathbb{Z})|L(g)\equiv n(modQ)\})<e^{-c_2{l}}    
\end{equation}
\end{lemma}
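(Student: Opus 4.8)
The plan is to prove Lemma 4.3 by reducing the statement modulo prime power divisors of $Q$ and applying the known escape-from-subvarieties estimates for Zariski-dense walks on each $\mathrm{SL}_2$ factor. First I would write $Q = \prod_i p_i^{e_i}$ and observe that the linear form $L(g) \equiv n \pmod Q$ is implied, for each fixed $i$, by the single congruence $L(g) \equiv n \pmod{p_i^{e_i}}$; so it suffices to find one prime power divisor $p^e \,\|\, Q$ with $p^e$ not too small (say $p^e > Q^{1/\omega(Q)}$, where $\omega$ counts distinct primes, which is harmless since $\omega(Q) \ll \log Q/\log\log Q$) and bound $\chi_S^{(l)}(\{g : L(g) \equiv n \pmod{p^e}\})$. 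Because $L$ is primitive, not all twelve coefficient-vectors $(X_j, Y_j, Z_j, W_j)$, $j=1,2,3$, can vanish mod $p$; fix an index $j$ with $(X_j, Y_j, Z_j, W_j) \not\equiv 0 \pmod p$. Then conditioning on the other two coordinates $g_{j'}$ ($j' \neq j$), the event becomes $\ell_j(g_j) \equiv n' \pmod{p^e}$ for a linear form $\ell_j$ on $\mathrm{Mat}_2$ whose coefficient vector is primitive mod $p^e$ after dividing out the common $p$-power, and $n'$ depends only on $g_{j'}$.

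Next I would invoke the one-factor non-concentration estimate: for a Zariski-dense $\langle \mathbb{P}_j(S)\rangle$ in $\mathrm{SL}_2$, the walk $\chi_{\mathbb{P}_j(S)}^{(l)}$ does not concentrate on any proper subvariety, and in the quantitative form relevant here (the $\mathrm{SL}_2(\mathbb{Z})$ analogue of Bourgain--Gamburd, as used in \cite{tang2023super}) one has $\chi_{\mathbb{P}_j(S)}^{(l)}(\{h : \ell_j(h) \equiv m \pmod{p^e}\}) < e^{-c_2 l}$ uniformly in $m$, provided $1 \ll_S l < c_1 \log(p^e)$ and the linear form is primitive mod $p^e$. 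The key point making the coefficient vector nonzero mod $p^e$ survive: write the common $p$-adic valuation $v = v_p(\gcd)$ of $\ell_j$'s coefficients; since $L$ is globally primitive we actually have $v=0$ for at least one choice of $j$, so $\ell_j$ is primitive mod $p$, hence the estimate applies with full strength. Summing (really, integrating the conditional bound) over the values of $g_{j'}$ with respect to the marginal of $\chi_S^{(l)}$, which is a probability measure, gives the same bound $e^{-c_2 l}$ for the full event in $\Gamma$. Adjusting constants to absorb the loss $p^e \geq Q^{1/\omega(Q)}$ (so $\log(p^e) \gg \log Q / \log\log Q$, and one shrinks $c_1$ slightly and the $l$-range accordingly, or one simply notes $l < c_1 \log Q$ with a smaller $c_1$ still lies below $c_1' \log(p^e)$ after renaming) completes the argument.

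I expect the main obstacle to be locating and citing the correct quantitative single-factor input: one needs the $\mathrm{SL}_2(\mathbb{Z})$ (not $\mathrm{SL}_2(\mathbb{F}_p)$) escape estimate with \emph{exponential} decay $e^{-c_2 l}$ valid in the short-time regime $l < c_1 \log Q$ and \emph{uniform} in the target residue $n$ and in the modulus, which is precisely the content of the analogous lemma in \cite{tang2023super}; the work is in checking that their hypotheses (primitivity of the linear form, Zariski-density of the projection) transfer under the conditioning step, and in handling the degenerate case where for \emph{every} $j$ the coefficient vector is divisible by $p$ — but this cannot happen simultaneously for all primes $p \mid Q$ with the vector having a fixed global gcd $1$, so a prime-by-prime choice of $j$ suffices. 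A secondary technical point is that $\mathbb{P}_j(S)$ might generate a Zariski-dense subgroup of $\mathrm{SL}_2$ even though $S$ itself is only assumed Zariski-dense in the product: this follows because the projection of a Zariski-dense subgroup to a factor is Zariski-dense, so the single-factor hypothesis is met. With these pieces in place the lemma follows by the conditioning-and-integrating scheme above.
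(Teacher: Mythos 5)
Your proposal breaks in two places, and neither is repairable within the framework you set up. First, the choice of a single prime power $p^e \,\|\, Q$ with $p^e > Q^{1/\omega(Q)}$ is far too weak: when $Q$ is a primorial one has $\log(p^e) \asymp \log\log Q$, so the range $l < c_1 \log Q$ does not fit under any fixed multiple of $\log(p^e)$; shrinking $c_1$ cannot absorb a $\log\log Q$ loss since the ratio diverges with $Q$. One must instead retain a large \emph{composite} exact divisor of $Q$. The paper does exactly this: it pigeonholes over the twelve coefficients $t$ of $L$ and chooses $Q' = Q_t \,\|\, Q$ with $Q' \geq Q^{1/12}$ coprime to $t$, so $\log Q' \geq \tfrac{1}{12}\log Q$ with an absolute constant.

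Second, and more seriously, the conditioning step is invalid. After fixing the other two coordinates $g_{j'}$, you want to bound the \emph{conditional} probability that $\ell_j(g_j) \equiv n'$ by the single-factor walk estimate for $\chi_{\mathbb{P}_j(S)}^{(l)}$. But the conditional law of $g_j$ given $(g_{j'})_{j'\neq j}$ under $\chi_S^{(l)}$ is not the random walk $\chi_{\mathbb{P}_j(S)}^{(l)}$ — the three factors of the walk are driven by the \emph{same} sequence of steps $s_1,\dots,s_l$ and are in general highly correlated, so the conditional distribution is a reweighting that has no reason to inherit the non-concentration bound, and an estimate valid for the marginal pushforward $\mathbb{P}_j[\chi_S^{(l)}]$ does not transfer to it. The paper avoids this entirely: it first proves the over-$\mathbb{Z}$ analogue (that $\chi_S^{(l)}$ does not concentrate on $\{L=0\}$) by using the spectral gap for the full product $\mathrm{SL}_2(\mathbb{F}_p)^3$ coming from Theorem~1.2 together with a codimension count, and then it proves the mod-$Q$ version via the Effective Bezout Identity of Berenstein--Yger: the linear system $\{f_\gamma = 0 : \gamma \in \mathcal{G}\}$ either has a rational common zero (hence $\mathcal{G}$ sits inside a proper primitive linear subvariety and the $\mathbb{Z}$-estimate applies), or there is a nonzero integer $M$ with $\log M < C'_S l$ and $M \equiv 0 \pmod{Q'}$, which contradicts $l < c_1 \log Q$ for suitably small $c_1$. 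This arithmetic-geometric reduction is the essential idea your proposal is missing.
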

To prove this lemma 4.5, the following consequence should be considered
\begin{lemma}
There is a constant $c = c(S) > 0$ such that for any $l\gg_S 1$, we have
\[
\chi_S^{(l)}(\{g\in SL_2(\mathbb{Z})\times SL_2(\mathbb{Z})\times SL_2(\mathbb{Z})|L(g)=0\})<e^{-cl}
\]   
\end{lemma}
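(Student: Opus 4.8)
The plan is to realize $\{g:L(g)=0\}$ as a proper subvariety of $\mathbb{G}:=\mathrm{SL}_2\times\mathrm{SL}_2\times\mathrm{SL}_2$ of bounded degree and then to invoke the principle that a random walk whose support generates a Zariski-dense subgroup of a semisimple group escapes proper subvarieties at an exponential rate. First I would check properness. View $\mathbb{G}\subset\mathbb{A}^{12}=\mathrm{Mat}_2\times\mathrm{Mat}_2\times\mathrm{Mat}_2$. Since $\mathrm{SL}_2(\mathbb{Z})$ contains four linearly independent matrices, $\mathrm{SL}_2$ linearly spans $\mathrm{Mat}_2$, hence $\mathbb{G}$ linearly spans $\mathbb{A}^{12}$, so no nonzero linear form vanishes identically on $\mathbb{G}$. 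As $L$ is primitive it is nonzero, so $V_L:=\{g\in\mathbb{G}:L(g)=0\}$ is a proper Zariski-closed subvariety; being a hyperplane section of the irreducible $9$-dimensional $\mathbb{G}$, it is pure of codimension $1$ and has degree at most $D_0:=\deg\mathbb{G}$, a bound independent of $L$. The same holds for the affine slices $\{L=c\}$ appearing when Lemma 4.5 is reduced to exact values of the form, so it is natural to prove the estimate uniformly over this bounded-degree family.

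It then suffices to show: there are $C,c>0$ depending only on $S$ with $\chi_S^{(l)}(W)\le Ce^{-cl}$ for every subvariety $W\subsetneq\mathbb{G}$ of degree $\le D_0$ and every $l\ge 1$; applying this with $W=V_L$ and absorbing $C$ into the exponent for $l\gg_S 1$ gives Lemma 4.6. This escape estimate is the three-factor analogue of the non-concentration results of \cite{tang2023super}, whose argument I would follow; it uses only semisimplicity of $\mathbb{G}$, Zariski-density of $\langle S\rangle$, and the degree bound on $W$. It combines two inputs. The first is a base estimate in dimension $0$: since $\langle S\rangle$ is Zariski-dense in the semisimple $\mathrm{SL}_2^{\times 3}$ it is not virtually solvable, so by the Tits alternative it contains a nonabelian free subgroup and is non-amenable; Kesten's theorem gives $\|\chi_S\|_{\ell^2(G)\to\ell^2(G)}=\rho<1$, whence $\|\chi_S^{(2l)}\|_\infty\le\chi_S^{(2l)}(1)=\|\chi_S^{(l)}\|_2^2\le\rho^{2l}$ (using that $\chi_S$ is symmetric), which settles $\dim W=0$ with $\deg W\le D_0$ bounding the number of points. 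The second is a descent on $\dim W$: splitting $W$ into its $\le D_0$ irreducible components reduces to $W$ irreducible, and then $\{h\in\mathbb{G}:hW=W\}$ is a proper algebraic subgroup, so it cannot contain all of $S$; picking $\gamma\in S$ with $\gamma W\neq W$, whenever the walk lies in $W$ at two consecutive times and the intervening step is such that the relevant translate is $\gamma W$, its position lies in $W\cap\gamma W$, a subvariety of dimension $<\dim W$ and degree $\le D_0^2$, where the inductive hypothesis applies. Organizing the $l$ steps into blocks and bookkeeping these forced drops in dimension along the trajectory then yields $Ce^{-cl}$. (An alternative route runs through random matrix products: $L(X^{(l)})$ equals a fixed linear functional evaluated at the random product $s_1\cdots s_l$, and positivity of the Lyapunov exponents, large-deviation bounds, and Hölder regularity of the stationary measures on the flag varieties of the $\mathrm{SL}_2$-factors force $\mathbb{P}(L(X^{(l)})=0)$ to be exponentially small.)

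The hard part is making this descent quantitative: at each re-entry of the walk into the subvariety one must control the transition between the ambient scale and the lower-dimensional scale while keeping the rate uniform over the bounded-degree family (the slices $V_L$ and $\{L=c\}$ form such a family, parametrized by a projective space, so uniformity follows from a Noetherianity argument on the parameter space). A further subtlety, invisible for a single $\mathrm{SL}_2$, is that the three coordinate blocks of $X^{(l)}$ are driven by one and the same walk rather than being independent, so ruling out degenerate linear relations among them requires Zariski-density of $\langle S\rangle$ in the full product $\mathrm{SL}_2^{\times 3}$, not merely in each factor --- which is precisely the standing hypothesis. Once the escape estimate is in place, the deduction of Lemma 4.6 is immediate.
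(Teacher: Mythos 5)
Your approach is genuinely different from the paper's. The paper proves Lemma 4.6 by reducing modulo a single well-chosen prime $p$ of size roughly $e^{\Theta(l)}$: it invokes Theorem 1.2 (Salehi-Golsefidy's super approximation for bounded powers of square-free moduli, here specialised to primes) as a blackbox to get a uniform spectral gap $\lambda<1$ for the Cayley graphs of $\Gamma_p$, so that $\pi_p[\chi_S^{(l)}]$ is within $\lambda^l$ of the uniform measure on $\Gamma_p$ once $l\gtrsim 9\log p/\log(1/\lambda)$; since $L$ is primitive, $L\bmod p$ is a nonzero linear form, the fibre $\{g\in\Gamma_p:L(g)\equiv n\}$ has $O(p^8)$ of the $\asymp p^9$ points, and choosing $p$ in the window $\bigl[\lambda^{-l/11},\lambda^{-l/10}\bigr]$ (Bertrand) gives the bound $O(1/p)\le e^{-cl}$. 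You instead stay over $\mathbb{C}$, realise $\{L=0\}$ as a bounded-degree proper hyperplane section of the irreducible ninefold $\mathrm{SL}_2^{\times 3}$, and prove exponential escape by induction on dimension, seeding the induction with Kesten's theorem (non-amenability of $\langle S\rangle$ via the Tits alternative) and using Zariski density of $\langle S\rangle$ to force a strict dimension drop $W\mapsto W\cap\gamma^{-1}W$ at each stage. Both routes are sound. The paper's is two lines once Theorem 1.2 is granted and needs nothing beyond a point count in a fixed hyperplane slice; yours avoids the expander blackbox entirely (which is aesthetically cleaner, since Lemma 4.6 then has purely soft inputs) but is substantially heavier: the quantitative bookkeeping of the descent, which you correctly flag as the hard part, is essentially the Eskin--Mozes--Oh / Salehi-Golsefidy--Var\'{j}u escape-from-subvarieties lemma and is a real argument in its own right. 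One small remark in the paper's favour that your framing handles automatically but the paper glosses over: primitivity of $L$ guarantees only that \emph{some} factor's coefficient block is nonzero mod $p$, so the $O(p^8)$ count strictly speaking requires a short case split on which $\mathrm{SL}_2$-factor one slices over.
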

\begin{proof}
By Theorem 1.2, there exists an absolute constant $0<\lambda<1$ which is the upper bound for all eigenvalues of the family of the operator $T_p: l_0^{2}(\Gamma) \longrightarrow l_0^{2}(\Gamma)$ defined at (2.1)\\
So
\begin{equation}
\left\|\chi_S^{(l)}-\frac{1}{\left|\pi_p(\Gamma)\right|} \mathbf{1}_{\pi_p(\Gamma)}\right\|_2 \leq \lambda^l    
\end{equation}
for all prime $p$.Then since $|\pi_p(\Gamma)|\approx p^9$, if $l>\frac{9logp}{(log\frac{1}{\lambda})}$,
\begin{equation}
\chi_S^{(l)}(g)<\frac{3}{|\pi_p(\Gamma)|}
\end{equation}
for any $g\in \pi_p(\Gamma)$. Therefore,
\begin{align*}
&\chi_S^{(l)}(\{g\in SL_2(\mathbb{Z})\times SL_2(\mathbb{Z})\times SL_2(\mathbb{Z})|L(g)=n\})
\\<&\chi_S^{(l)}(\{g\in SL_2(\mathbb{Z})\times SL_2(\mathbb{Z})\times SL_2(\mathbb{Z})|L(g)\equiv n\bmod p)\})\\<&\frac{3p^2|SL_2(F_P)\times SL_2(F_P)|}{p^9}\\<&\frac{4}{p}      
\end{align*}

so we can take the prime $p\in [e^{\frac{l}{11}log\frac{1}{\lambda}},e^{\frac{l}{10}log\frac{1}{\lambda}}]$. This can be done when $l$ is sufficiently large.\\
\end{proof}
Now we can finish the proof of the lemma, we need the Effective Bezout Identity proposed in \cite{BY91}
\begin{proof}
Write $Q=\prod_{i\in I}p_i^{n_i}$. There are two cases: $n=0$ and $n\neq 0$. Considering $n=0$, at first, by the primitive of $L$, for each $p|Q$, at least one of
\[
t \in A:=\{X_1,Y_1,Z_1,W_1,X_2,Y_2,Z_2,W_2,X_3,Y_3,Z_3,W_3\}
\]
must be inevitable $\bmod p$. For each $t \in A$,let 
\begin{equation}
Q_t=\prod_{\substack{p_i \mid Q \\ \operatorname{gcd}\left(p_i, t\right)=1}} p_i^{n_i}
\end{equation}
Since $\prod_{t\in A}Q^{t}\geq Q$, there exists $t \in A$ such that 
\[
Q^{'}:=Q_t \geq Q^{\frac{1}{12}}
\]
Without loss of generality, we take $t=X_1$ so that $Q^{'}||Q$ and $(Q^{'},X_1)=1$.
Now we define the 
\begin{equation}
\left\|\left(g_1, g_2\right)\right\|=\max \left\{\text { absolute values of coefficients of } g_1 \text { and } g_2\right\} \text {. }
\end{equation}
Let the upper bound of $\left\| g \right\|$ for all $g \in supp\chi_S$ is $C_1$. Define 
\begin{equation}
\mathcal{G}=\left\{g \in \prod_l \operatorname{supp}\left[\chi_S\right] \mid L(g) \equiv 0(\bmod Q^{\prime})\right\}
\end{equation}.
the question is transfer to show:
\[
\chi_S^{(l)}(\mathcal{G})<e^{-cl}
\]
For each $\gamma=\begin{pmatrix}
    x_1 & y_1 \\ z_1 & w_1
\end{pmatrix},\begin{pmatrix}
    x_2 & y_2 \\ z_2 & w_2
\end{pmatrix},\begin{pmatrix}
    x_3 & y_3 \\ z_3 & w_3
\end{pmatrix}\in \mathcal{G}$ we have a linear polynomial
\begin{align*}
f_{\gamma}=&f_\gamma(\tilde{Y}_1, \tilde{Z}_1, \tilde{W}_1, \tilde{X}_2, \tilde{Y}_2, \tilde{Z}_2, \tilde{W}_2, \tilde{X}_3, \tilde{Y}_3, \tilde{Z}_3, \tilde{W}_3)\\
&\in Q(\tilde{Y}_1, \tilde{Z}_1, \tilde{W}_1, \tilde{X}_2, \tilde{Y}_2, \tilde{Z}_2, \tilde{W}_2,\tilde{X}_3, \tilde{Y}_3, \tilde{Z}_3, \tilde{W}_3),
\end{align*}
that is,
\begin{equation}
\begin{aligned}
&f_\gamma
=x_1+\tilde{Y}_1 y_1+\tilde{Z}_1 z_1+\tilde{W}_1 w_1\\
&+\tilde{X}_2 x_2+\tilde{Y}_2 y_2+\tilde{Z}_2 z_2+\tilde{W}_2 w_2+\tilde{X}_3 x_3+\tilde{Y}_3 y_3+\tilde{Z}_3 z_3+\tilde{W}_3 w_3
\end{aligned}
\end{equation}
Then we get
\begin{equation}
\begin{gathered}
X_1f_\gamma\equiv 0(\bmod Q^{'})\text{for all $\gamma \in \mathcal{G}$} 
\end{gathered}
\end{equation}
Here $\tilde{X}_1$ is the multiplicative inverse of $X_1 \bmod Q^{'}$.\\
Also, from the definition of $\mathcal{G}$, we can name the coefficient of $f_\gamma$ as $\gamma$ and find the following inequality 
$$
h(f_\gamma)<llogC_1
$$
From the condition we proposed above, we can claim there is a common zero $(\tilde{Y}, \tilde{Z}, \tilde{W}, \tilde{A}, \tilde{B}, \tilde{C}, \tilde{D},\tilde{E}, \tilde{F}, \tilde{G}, \tilde{H},\tilde{I})\in C^{11}$ to the following system of equations:
\begin{equation}
\begin{gathered}
f_\gamma(\tilde{Y}, \tilde{Z}, \tilde{W}, \tilde{A}, \tilde{B}, \tilde{C}, \tilde{D},\tilde{E}, \tilde{F}, \tilde{G}, \tilde{H})=0\\\text{for all $\gamma \in \mathcal{G}$} 
\end{gathered}    
\end{equation}
so that we lift the problem to $C$ by showing that $G$ is contained in some proper sub-variety of $SL_2(Z)\times SL_2(Z)\times SL_2(Z)$. Note that  is essentially $N\leq (2k)^l$ linear polynomials $F_1, . . . , F_N$.
Now we can invoke Theorem $n=11,d=3,h=llogC_1$,then there is an integer $M \in Z_+$ and polynomials $\varphi_1,...\varphi_N \in Z[X,Y]$ of degree at most $b=11\times23\times3^{11}$ satisfying 
\begin{equation}
M=\sum_{l=1}^N F_l\varphi_l    
\end{equation}
with
\begin{equation}
0<\log M, h\left(\varphi_l\right)<\mathfrak{X}(11) 3^{91}\left(\left(\log C_1+\log (|S|)\right) l+3 \log 3\right)<C_S^{\prime} l  \end{equation}
where
$C_S^{\prime}=3^{92} \mathfrak{X}(11) \log C_1|S|$.
Now we take
$$
\left(Y^{\prime}, Z^{\prime}, W^{\prime}, A^{\prime}, B^{\prime}, C^{\prime}, D^{\prime},E^{\prime},F^{\prime},G^{\prime},H^{\prime}\right) \in \mathbb{Z}^{11}
$$
and
$$
\iota=\left(Y_1 \bar{X}_1, Z_1 \bar{X}_1, W_1 \bar{X}_1, X_2 \bar{X}_1, Y_2 \bar{X}_1, Z_2 \bar{X}_1, W_2 \bar{X}_1,X_3 \bar{X}_1, Y_3 \bar{X}_1, Z_3 \bar{X}_1, W_3 \bar{X}_1\right)
$$
such that
$$
\left(Y^{\prime}, Z^{\prime}, W^{\prime}, A^{\prime}, B^{\prime}, C^{\prime}, D^{\prime},E^{\prime},F^{\prime},G^{\prime},H^{\prime}\right) \equiv\iota\left(\bmod Q^{\prime}\right) 
$$
It follows that
$$
\left.M \equiv \sum_{l=1}^N F_l \varphi_l\right|_{\left({Y^{\prime}, Z^{\prime}, W^{\prime}, A^{\prime}, B^{\prime}, C^{\prime}, D^{\prime},E^{\prime},F^{\prime},G^{\prime},H^{\prime}} \right)} \equiv 0\left(\bmod Q^{\prime}\right),
$$

Therefore, since $M \neq 0$, by  we deduce $\frac{1}{12} \log Q \leq \log Q^{\prime} \leq \log M<C_S^{\prime} l$, which contradicts the restriction $l<c_1 \log Q$ by taking $c_1=\frac{1}{12 C_S^{\prime}}>0$. This proves the claim. Since the linear system admits a solution and the coefficients of $f_\gamma$ are all integral, it must admit a rational solution 
$$
\zeta=(Y^{\prime}, Z^{\prime}, W^{\prime}, A^{\prime}, B^{\prime}, C^{\prime}, D^{\prime},E^{\prime},F^{\prime},G^{\prime},H^{\prime})
$$. 
In other words,
$$
x_1+\tilde{Y}_1 y_1+\tilde{Z}_1 z_1+\tilde{W}_1 w_1+\tilde{X}_2 x_2+\tilde{Y}_2 y_2+\tilde{Z}_2 z_2+\tilde{W}_2 w_2+\tilde{X}_3 x_3+\tilde{Y}_3 y_3+\tilde{Z}_3 z_3+\tilde{W}_3 w_3=0
$$
From the discussion above, It's easy for us to obtain $\zeta$ so that $$\operatorname{gcd}(X_1,Y_1,Z_1,W_1,X_2,Y_2,Z_2,W_2,X_3,Y_3,Z_3,W_3)=1
$$
and for all $\left(\begin{pmatrix}
    x_1 & y_1 \\ z_1 & w_1
\end{pmatrix},\begin{pmatrix}
    x_2 & y_2 \\ z_2 & w_2
\end{pmatrix},\begin{pmatrix}
    x_3 & y_3 \\ z_3 & w_3
\end{pmatrix}\right)\in \mathcal{G}$, there is 
\[
x_1+\tilde{Y}_1 y_1+\tilde{Z}_1 z_1+\tilde{W}_1 w_1+\tilde{X}_2 x_2+\tilde{Y}_2 y_2+\tilde{Z}_2 z_2+\tilde{W}_2 w_2+\tilde{X}_3 x_3+\tilde{Y}_3 y_3+\tilde{Z}_3 z_3+\tilde{W}_3 w_3=0
\]
The case $n\neq 0$ is  simpler, we just need to redefine
\begin{align*}
&f_\gamma\left(\tilde{Y}_1, \tilde{Z}_1, \tilde{W}_1, \tilde{X}_2, \tilde{Y}_2, \tilde{Z}_2, \tilde{W}_2, \tilde{X}_3, \tilde{Y}_3, \tilde{Z}_3, \tilde{W}_3\right)+n\\&=\tilde{X}_1x_1+\tilde{Y}_1 y_1+\tilde{Z}_1 z_1+\tilde{W}_1 w_1+\tilde{X}_2 x_2+\tilde{Y}_2 y_2+\tilde{Z}_2 z_2+\tilde{W}_2 w_2+\tilde{X}_3 x_3+\tilde{Y}_3 y_3\\&+\tilde{Z}_3 z_3+\tilde{W}_3 w_3    
\end{align*}
and proceed the analogous analysis as $n=0$.

Now we can finish the proof of lemma that the linear form determined by the constant we obtain from above discussion,
for $1\ll_S l<c_1logQ$,
$$
\begin{aligned}
& \chi_S^{(l)}\left(\left\{g \in \mathrm{SL}_2(\mathbb{Z}) \times \mathrm{SL}_2(\mathbb{Z}) \times \mathrm{SL}_2(\mathbb{Z})\mid L(g) \equiv 0(\bmod Q)\right\}\right) \\
\leq & \chi_S^{(l)}\left(\left\{g \in \mathrm{SL}_2(\mathbb{Z}) \times \mathrm{SL}_2(\mathbb{Z}) \times \mathrm{SL}_2(\mathbb{Z}) \mid L(g) \equiv 0\left(\bmod Q^{\prime}\right)\right\}\right) \\
= & \chi_S^{(l)}(\mathcal{G}) \\
\leq & \chi_S^{(l)}\left(\left\{\gamma \in \mathrm{SL}_2(\mathbb{Z}) \times \mathrm{SL}_2(\mathbb{Z}) \times \mathrm{SL}_2(\mathbb{Z})\mid \tilde{L}(\gamma)=0\right\}\right) \\
< & e^{-c_2 l} .
\end{aligned}
$$
where $c_2$ is the constant $c$ given in lemma.
\end{proof}
\begin{proof}{\text{of Proposition 4.1}}
Let $c_1, c_2$ be the constants given by Lemma 4.
Let $l_0=\left[c_1 \log Q\right]$ and write $\chi_S^{(l)}=\chi_S^{l_0} * \chi_S^{\left(l-l_0\right)}$. For any $g^{\prime}$ in the support of $\chi_S^{l-l_0}$, Consider $L_{g^{\prime}}(g)=L\left(g g^{\prime}\right)$. obviously, $L_{g^{\prime}}$ is also primitive, so Lemma is applicable to $L_{g^{\prime}}$.Therefore,
\begin{equation}
\begin{aligned}
& \pi_Q^*\left[\chi_S^{(l)}\right]\left(\left\{g \in \mathrm{SL}_2(\mathbb{Z}) \times \mathrm{SL}_2(\mathbb{Z}) \times \mathrm{SL}_2(\mathbb{Z}) \mid L(g) \equiv n(\bmod Q)\right\}\right) \\
=& \sum_{g^{\prime} \in \Gamma} \pi_Q^*\left[\chi_S^{\left(l_0\right)}\right]\left(\left\{g \in \Gamma\mid L_{\left(g^{\prime}\right)^{-1}}(g) \equiv n(\bmod Q)\right\}\right) \chi_S^{\left(l-l_0\right)}\left(g^{\prime}\right) \\
\leq & Q^{-c_1 c_2}
\end{aligned}
\end{equation}
Proposition 4.1 is proved by taking $c=c_1 c_2$.
\end{proof}
\section{Bounded generation}
Let $q=\prod_{i \in I} p_i^{n_i}, q_s=\prod_{i \in I: n_i \leq L} p_i^{n_i}, q_l=\prod_{i \in I: n_i>L} p_i^{n_i}$ for some $L$ to be specified later. In this section we assume
$$
q_l>q^{\epsilon / 2} \text {.}
$$

We let $c_0=c_0(L)$ be the implied constant from Theorem $\mathbf{1.1}$ for the power bound $L$. We fix $c_1, c_2$ to be the implied constant $c$ from Proposition $\mathbf{4.1}$ and Proposition $\mathbf{4.2}$.Assuming all the assumptions in Proposition $\mathbf{2.2}$ are satisfied but the conclusion fails, i.e.,
\begin{equation}
\left|\pi_q(A \cdot A \cdot A)\right| \leq\left|\pi_q(A)\right|^{1+\delta},    
\end{equation}
Take $A_0=A \cdot A \cap \Gamma\left(q_0\right)$, where $q_0=\prod_{p \mid q_l} p$. Then
$$
\chi_S^{2 l}\left(A_0\right)>q^{-2 \delta}/|SL_2(q_0)\times SL_2(q_0)\times SL_2(q_0)|\thickapprox q^{-2 \delta} q_0^{-18}>q^{-3 \delta}
$$
if we let
$$
L>\frac{18}{\delta} .
$$
With the above condition,we only have to make a minor change of proposition 5.6 of \cite{tang2023super}
\begin{proposition}
There are constants $c>0$ depending only on the generating set $S$ and $\epsilon$, in particular, independent of $\delta$, and $\rho=\rho(\delta)>0, C=C(\delta) \in \mathbb{Z}_{+}$, with $\rho(\delta) \rightarrow 0$ as $\delta \rightarrow 0$, such that
$$
\Lambda\left(\left(q^{\prime}\right)^{[\rho]}\right) / \Lambda\left(q^{\prime}\right) \subset\left(\mathbb{P}_i A_0\right)^C, i=1,2,3,
$$
where $q^{\prime} \| q_l$ for $L>\frac{18}{\delta}$ and $q^{\prime} \geq q_l^c$.    
\end{proposition}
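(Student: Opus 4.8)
The plan is to reduce Proposition 5.1 to the corresponding single-factor statement from \cite{tang2023super} (Proposition 5.6 there) applied to each projection $\mathbb{P}_i A_0 \subset \Lambda$, and then to upgrade the conclusion using the bounded generation input (Proposition 3.1) and the non-concentration estimates (Propositions 4.1 and 4.2). First I would fix $i$ and consider $B_i := \mathbb{P}_i(A_0) \subset \Lambda$. Since $A_0 = A\cdot A \cap \Gamma(q_0)$ and $\chi_S^{2l}(A_0) > q^{-3\delta}$ with $l > \delta^{-1}\log q$, the pushforward $\mathbb{P}_i[\chi_S^{2l}]$ is a convolution power of a Zariski-dense measure on $\Lambda$, so the hypotheses of the single-factor escape/expansion machinery are met for $B_i$: it carries mass $> q^{-3\delta}$ under a long convolution, and it is supported in $\Lambda(q_0)$. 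The single-factor analysis of \cite{tang2023super} then produces, for each $i$, an exact divisor $q_i' \| q_l$ with $q_i' \ge q_l^{c}$ (for $L > 18/\delta$) and an absolute constant $C$ such that $\Lambda((q_i')^{[\rho]})/\Lambda(q_i') \subset (\mathbb{P}_i A_0)^C$, with $\rho = \rho(\delta) \to 0$. The only change from the cited argument is bookkeeping: we must track that the three divisors $q_1', q_2', q_3'$ so obtained can be taken to be (a single) $q'$, or at least that one of them is $\gg q_l^c$, which is immediate since the divisor-selection step (a pigeonhole over prime-power components, as in Lemma 3.2's proof) is applied factorwise and only uses the total mass bound $q^{-3\delta}$.

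The key steps, in order, are: (1) record that $\mathbb{P}_i[\chi_S^{2l}]$ is genuinely the $2l$-fold convolution of the Zariski-dense measure $\mathbb{P}_i[\chi_S]$ on $\Lambda$ — this is where Zariski-density of $G$ in $\mathrm{SL}_2\times\mathrm{SL}_2\times\mathrm{SL}_2$ is used, since it forces each $\mathbb{P}_i(G)$ to be Zariski-dense in $\mathrm{SL}_2$; (2) verify the mass bound $\mathbb{P}_i[\chi_S^{2l}](B_i) > q^{-3\delta}$ and the congruence localization $B_i \subset \Lambda(q_0)$; (3) invoke the non-concentration Propositions 4.1 and 4.2 to guarantee that $B_i$ escapes proper subvarieties and subgroups of $\Lambda_{q'}$ at the required quantitative rate — these play the role in the single-factor setting of the analogous estimates in \cite{tang2023super}, and it is precisely to have them available in the product setting that Section 4 was set up; (4) run the Helfgott-type bounded-generation/pivoting argument (Proposition 3.1, with its constant $7200$) to pass from "escapes subvarieties" to "contains $\Lambda((q')^{[\rho]})/\Lambda(q')$"; (5) package the constants so that $c$ depends only on $S$ and $\epsilon$ (via $c_1, c_2$ from Section 4 and $c_0 = c_0(L)$ from Theorem 1.1), while $\rho, C$ depend on $\delta$ with $\rho(\delta)\to 0$.

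The main obstacle I expect is step (4) together with the uniformity claim in step (5): one must check that applying the single-factor argument of \cite{tang2023super} verbatim to $\mathbb{P}_i A_0$ does not secretly use coprimality of $q'$ with anything, and does not introduce a dependence of $c$ on $\delta$. The delicate point is that in the cited proof the escape estimate is used at scale $q'$ where $q' \| q_l$, and the bound $q' \ge q_l^c$ must survive the pigeonhole that selects $q'$ from among the exact prime-power divisors of $q_l$; here the exponent lower bound $L > 18/\delta$ is exactly what makes $q_0^{-18} q^{-2\delta} > q^{-3\delta}$, i.e. what keeps the localized mass polynomially large. A secondary subtlety is that $\rho$ must be taken small enough (depending on $\delta$) that $\Lambda((q')^{[\rho]})/\Lambda(q')$ is a genuine congruence quotient on which the bounded generation of Proposition 3.1 applies; this forces $[\,n_p \rho\,] < n_p$ for the relevant primes, which holds once $\rho < 1$, but the quantitative rate $\rho(\delta)\to 0$ comes out of the Helfgott argument and should be extracted by the same computation as in \cite{tang2023super}. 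Everything else is routine transcription, so I would state explicitly that the proof is "the same as Proposition 5.6 of \cite{tang2023super} applied to each $\mathbb{P}_i A_0$, using Propositions 4.1 and 4.2 in place of the single-factor non-concentration estimates, and Proposition 3.1 for the bounded generation step."
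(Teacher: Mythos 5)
Your proposal takes essentially the same approach as the paper, which itself gives no independent proof but simply says to ``make a minor change of proposition 5.6 of \cite{tang2023super}'' and quotes the constant formulas $c,\rho,C$ from equation (5.86) there; you flesh out what that ``minor change'' amounts to (Zariski-density of each $\mathbb{P}_i(G)$, the $L>18/\delta$ bookkeeping that keeps $\chi_S^{(2l)}(A_0)>q^{-3\delta}$, and the substitution of the three-factor non-concentration Propositions 4.1 and 4.2 for the two-factor ones of \cite{tang2023super}), all of which is consistent with the paper's setup in the paragraphs preceding Proposition 5.1. One small organizational nit: the ``bounded generation'' step internal to Proposition 5.1 is the single-factor Helfgott-type growth argument from \cite{tang2023super}, whereas this paper's Proposition 3.1 is the three-factor product statement that is actually deployed later in the gluing argument of Section 6 (cf.\ equation (6.7)), so invoking Proposition 3.1 here is not quite the right attribution, though it does contain the single-factor case by setting $q_2=q_3=1$.
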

We can straightly use the (5.86) of \cite{tang2023super}
\begin{equation}
c=\frac{c_1^2 c_3^2}{1024^2 C_1^2}, \rho=\frac{3 \times 10^{18} C_1^4 C_3^2}{c_1^9 c_2 c_3^5} \frac{\delta}{\epsilon}, C=C_3 2^{C_2} 8^{\left[\frac{C_1^5 C_3^3}{c_1^1 c_2 c_3^5} \frac{\epsilon}{\delta}\right]} .
\end{equation}
\section{Gluing moduli}
This part mainly use the the proposition 3.1 proposed in \cite{tang2023super}, we record it for reader's convenience.
\begin{proposition}
Let $G_1, G_2$ be two finite multiplicative groups and let $\psi: G_1 \rightarrow G_2$ some map. Then for $0<\varepsilon<\frac{1}{1600}$ we have either
\begin{equation}
\left|\left\{(x, y) \in G_1 \times G_1 \mid \psi(x y)=\psi(x) \psi(y)\right\}\right|<(1-\varepsilon)\left|G_1\right|^2 \text {, or }   
\end{equation}
there exists a subset $S \subset G_1$ with $|S|>(1-\sqrt{\varepsilon})\left|G_1\right|$ and a group homomorphism $f: G_1 \rightarrow$ $G_2$ such that
\begin{equation}
\left.f\right|_S=\left.\psi\right|_S
\end{equation}
\end{proposition}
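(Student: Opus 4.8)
The plan is to assume the first alternative fails, so that
$$\Omega := \{(x,y)\in G_1\times G_1 : \psi(xy)=\psi(x)\psi(y)\}$$
satisfies $|\Omega|\ge(1-\varepsilon)|G_1|^2$, and then to manufacture $f$ by a majority-vote construction. For each $x\in G_1$ introduce the auxiliary map $\phi_x\colon G_1\to G_2$, $\phi_x(y)=\psi(xy)\psi(y)^{-1}$; morally $f(x)$ should be the most frequently occurring value of $\phi_x(y)$ as $y$ ranges over $G_1$. The reason to work through $\phi_x$ rather than directly with $\psi$ is that $f(x)$ can then be shown to be well-defined for \emph{every} $x\in G_1$, not merely for $x$ in a large subset.

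First I would prove the concentration estimate: for every $x\in G_1$,
$$\bigl|\{(y_1,y_2)\in G_1\times G_1 : \phi_x(y_1)=\phi_x(y_2)\}\bigr|\ge(1-2\varepsilon)|G_1|^2.$$
To see this, write a uniform random pair $(y_1,y_2)$ as $(z,zy)$ with $z,y$ independent and uniform. Since $xz$ is uniform as $z$ varies, both $(xz,y)\in\Omega$ and $(z,y)\in\Omega$ hold for at least $(1-2\varepsilon)|G_1|^2$ pairs $(z,y)$, and on that event $\phi_x(zy)=\psi(xz)\psi(y)\psi(y)^{-1}\psi(z)^{-1}=\psi(xz)\psi(z)^{-1}=\phi_x(z)$. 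Writing $p_g=\tfrac1{|G_1|}|\{y:\phi_x(y)=g\}|$, this says $\sum_{g\in G_2}p_g^2\ge1-2\varepsilon$, so by the bound $\sum_g p_g^2\le\max_g p_g$ the most popular value, which we call $f(x)$, is attained with relative frequency $\ge1-2\varepsilon>\tfrac12$; in particular it is unique, and this defines a map $f\colon G_1\to G_2$.

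Next I would locate the good set. Since $\tfrac1{|G_1|}\sum_x\tfrac1{|G_1|}|\{y:(x,y)\in\Omega\}|\ge1-\varepsilon$, Markov's inequality applied to the deficiency $x\mapsto1-\tfrac1{|G_1|}|\{y:(x,y)\in\Omega\}|$ produces a set $S\subset G_1$ with $|S|>(1-\sqrt\varepsilon)|G_1|$ on which $\tfrac1{|G_1|}|\{y:(x,y)\in\Omega\}|$ is close to $1$ (choosing the threshold a hair inside $1-\sqrt\varepsilon$ secures the strict inequality). For $x\in S$ one then has $\tfrac1{|G_1|}|\{y:\phi_x(y)=\psi(x)\}|>\tfrac12$, which forces $f(x)=\psi(x)$; hence $f|_S=\psi|_S$.

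The substantive step — the one I expect to carry the real weight — is that $f$ is a group homomorphism. Fix $x,y\in G_1$. By the definition of $f$, and using that $z\mapsto yz$ is a bijection of $G_1$, each of the three equalities
$$f(xy)=\psi(xyz)\psi(z)^{-1},\qquad f(x)=\psi(xyz)\psi(yz)^{-1},\qquad f(y)=\psi(yz)\psi(z)^{-1}$$
holds for all but at most $2\varepsilon|G_1|$ values of $z$; a union bound leaves at least $(1-6\varepsilon)|G_1|>0$ values of $z$ at which all three hold simultaneously, and for any such $z$ they multiply to $f(x)f(y)=\psi(xyz)\psi(yz)^{-1}\psi(yz)\psi(z)^{-1}=\psi(xyz)\psi(z)^{-1}=f(xy)$. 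Thus $f$ is a homomorphism. All the numerical thresholds invoked ($2\varepsilon<\tfrac12$ for well-definedness, $\sqrt\varepsilon<\tfrac12$ for $f|_S=\psi|_S$, $6\varepsilon<1$ for the homomorphism property) are amply satisfied when $\varepsilon<\tfrac1{1600}$, so the only point needing real care is the bookkeeping of which events hold for which proportion of the auxiliary variable $z$; the rest is the standard stability-of-homomorphisms mechanism.
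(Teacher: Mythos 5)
Your argument is correct and complete, and it is the standard stability-of-approximate-homomorphisms (majority-decoding) proof; the paper itself does not prove this proposition but quotes it from \cite{tang2023super} (their Proposition 3.1) for reference, and your approach is almost certainly the one used there, given the form of the constants. The three steps all check out: the substitution $(y_1,y_2)=(z,zy)$ is a bijection of $G_1^2$, $z\mapsto xz$ is a bijection so the union bound gives $(1-2\varepsilon)|G_1|^2$ pairs with $\phi_x(y_1)=\phi_x(y_2)$, the $\ell^2$-to-$\ell^\infty$ bound $\sum_g p_g^2\le\max_g p_g$ then makes the plurality value unique (since $1-2\varepsilon>\tfrac12$), and the three-event union bound over $z$ for fixed $x,y$ yields the homomorphism identity with room to spare. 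One small phrasing issue: in the Markov step, the cleaner way to get the strict inequality $|S|>(1-\sqrt{\varepsilon})|G_1|$ is to take the deficiency threshold $t$ to be slightly larger than $\sqrt{\varepsilon}$ (not ``a hair inside $1-\sqrt\varepsilon$''); then Markov gives $|S|\ge(1-\varepsilon/t)|G_1|>(1-\sqrt{\varepsilon})|G_1|$, while $1-t>\tfrac12$ still holds comfortably for $\varepsilon<\tfrac1{1600}$, so $\psi(x)$ is the plurality and $f(x)=\psi(x)$ on $S$. That is only a wording nit; the mathematics is sound and the numerical slack is ample.
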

By the proposition above,we can have the following conclusion.
\begin{proposition}
Suppose A satisfies (2.3) but fails (2.4). Let $0<\theta<10^{-18}$, and suppose
\begin{equation}
\delta<\frac{c_2 \epsilon \theta}{2}     
\end{equation}
where $\delta, \epsilon$ be given as in .
Let $q_1, q_2,q_3\left\|q, q_4\right\| q_l, \operatorname{gcd}\left(q_1, q_4\right)=1$, and $q_4>q^{288 \theta^{\frac{1}{2}}}$, where in the definition of $q_l$ we require $L>\frac{18}{\delta}$. Suppose for some set $B \subset \Gamma\left(q_0\right)$ where $q_0=\prod_{p \mid q_l} p$, we have
\begin{equation}
\begin{gathered}
\left|\pi_{q_1, q_2,q_3}(B)\right|>\left(q_1 q_2q_3\right)^{3-\theta} \\
\left|\pi_{q_4, 1,1}(B)\right|>q_4^{3-\theta}
\end{gathered}    
\end{equation}
Then there exists $q_4^* \| q_4, q_4^*>q_4^{\frac{1}{4} 10^{-4}}$, such that
\begin{equation}
\left|\pi_{q_1 q_4^*, q_2,q_3}(B \cup A)^{\left[200 \cdot 8^{\theta^{-\frac{1}{2}}}\right]}\right|>\left(q_1 q_2q_3q_4^*\right)^{3-900 \theta^{\frac{1}{4}}} .
\end{equation}
\end{proposition}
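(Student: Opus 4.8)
The plan is to mirror the gluing argument of \cite{tang2023super}, which deduces exactly this kind of statement from the almost-homomorphism dichotomy of Proposition 6.1, with the three-factor inputs of Sections~3 and 4 (Propositions 3.1, 4.1, 4.2) substituted for their two-factor analogues and the third $\mathrm{SL}_2$-factor carried along as a silent passenger everywhere the second one appears. First I would build a \emph{connecting map}. From the two largeness hypotheses $|\pi_{q_1,q_2,q_3}(B)|>(q_1q_2q_3)^{3-\theta}$ and $|\pi_{q_4,1,1}(B)|>q_4^{3-\theta}$, a pigeonhole over fibres produces a large subset $S_1\subset\Lambda_{q_1}$ and a section $\psi\colon S_1\to B$ with $\pi_{q_1}\circ\mathbb{P}_1\circ\psi=\mathrm{id}$, together with the auxiliary coordinate $\bar\psi(x):=\pi_{q_4}(\mathbb{P}_1(\psi(x)))\in\Lambda_{q_4}$, which lies in the congruence part $\Lambda(\mathrm{rad}\,q_4)/\Lambda(q_4)$ since $B\subset\Gamma(q_0)$ and whose image has size $q_4^{3-O(\theta^{1/2})}$. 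For each $p^n\|q_4$ I then apply Proposition 6.1 to $\bar\psi$ followed by the reduction $\Lambda_{q_4}\to\Lambda_{p^{[n\theta]}}$, and sort the primes $p\mid q_4$ into a divisor $q_4'$ where the non-multiplicativity alternative of Proposition 6.1 holds and a divisor $q_4''$ where the near-homomorphism alternative holds; a pigeonhole on exponents makes one of $q_4',q_4''$ exceed $q_4^{1/2}$, and the proof splits into the three cases of Section~2.1.

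\emph{Non-multiplicativity.} If $q_4'$ dominates, then for each $p^n\|q_4'$ there are $x,y\in\Lambda_{q_1}$ with $\bar\psi(xy)\ne\bar\psi(x)\bar\psi(y)\pmod{p^{[n\theta]}}$, so $\psi(xy)\psi(x)^{-1}\psi(y)^{-1}$, formed inside $(B\cup B^{-1})^{3}$, has trivial $q_1$-component but a $q_4$-component nontrivial mod $p^{[n\theta]}$. Conjugating this element by elements of $\pi_q(A)$ in ``general position'' — this is the step that invokes the non-concentration estimates of Proposition 4.1 and Proposition 4.2 and that consumes the hypothesis $\delta<c_2\epsilon\theta/2$ — and then feeding the output to the bounded-generation statement Proposition 3.1 over $\mathrm{SL}_2^{3}$ and the congruence-subgroup lemmas of \cite{tang2023super}, I obtain inside $(B\cup A)^{[200\cdot 8^{\theta^{-1/2}}]}$ a set with trivial $q_1$-component whose reduction modulo a suitable exact divisor $q_4^*$ of $q_4'$, with $q_4^*>q_4^{10^{-4}/4}$, fills $\Lambda((q_4^*)^{[\rho']})/\Lambda(q_4^*)$ for some $\rho'=O(\theta^{1/4})$. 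Multiplying this by $\pi_{q_1q_4^*,q_2,q_3}(B)$, which already covers all but a $q^{O(\theta)}$-fraction of $\Lambda_{q_1}\times\Lambda_{q_2}\times\Lambda_{q_3}$, yields a subset of $\Lambda_{q_1q_4^*}\times\Lambda_{q_2}\times\Lambda_{q_3}$ of size $(q_1q_2q_3q_4^*)^{3-900\theta^{1/4}}$, as required.

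\emph{Near-homomorphism.} If instead $q_4''$ dominates, then for each $p^n\|q_4''$ the map $\bar\psi$ agrees on a density-$(1-\sqrt\theta)$ subset with a homomorphism $f_p\colon\Lambda_{q_1}\to\Lambda_{p^{[n\theta]}}$; since $\gcd(q_1,q_4)=1$ the possible images of the $f_p$ are severely constrained, so after splitting $q_4''$ I may assume that $f_p$ is trivial for every remaining prime, or nontrivial for every remaining prime. In the trivial case $\bar\psi$ sends a large set into the congruence part $\Lambda((q_4'')^{\{\theta\}})/\Lambda(q_4'')$, so $\psi$ already links $\Lambda_{q_1}$ with a large subset of that nilpotent group; commutators of $\psi(S_1)$, whose $q_1$-components exhaust $\Lambda_{q_1}$, combined with the growth lemmas of Section~3 and mixing by $\pi_q(A)$, then reconstruct a large subset with the $q_4^*$-coordinate filled. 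In the nontrivial case, where $\pi_{(q_4^*)^{\{\theta/2\}}}\circ\bar\psi\ne1$, one transports a one-parameter unipotent-type subgroup of $\Lambda_{q_4}$ out of $B$, powers it up and sweeps it by $\pi_q(A)$ so as to capture every prime dividing $q_1q_4^*$ simultaneously, and concludes as before.

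\emph{Main obstacle.} The genuinely delicate point is the last subcase: I must show that after factoring out its homomorphic part the defect map $\bar\psi$ still carries enough additive structure modulo $(q_4^*)^{\{\theta/2\}}$ to generate a full congruence subgroup in only $O(8^{\theta^{-1/2}})$ operations, uniformly over the three factors, so that the explicit constants $q_4^*>q_4^{10^{-4}/4}$ and $3-900\theta^{1/4}$ of the conclusion are achieved. Making the conjugation-by-$A$ steps rigorous also forces one to check that the subvarieties cut out by the relevant trace-forms $\kappa_i$ are all proper, so that Proposition 4.1 and Proposition 4.2 apply; and the bookkeeping of the exact divisors $q_4', q_4'', q_4^*$ and of the powers of $\theta$ accumulating through the case splits, while routine, must follow \cite{tang2023super} essentially line by line, with the third coordinate inserted wherever the second appears.
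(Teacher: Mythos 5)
Your proposal tracks the paper's proof of Proposition 6.2 in its overall shape — connecting map, Proposition 6.1 dichotomy over the primes dividing $q_4$, and a three-way case split — but there are two concrete places where the argument as written would not go through.

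First, the domain of the connecting map. You build $\psi$ on a large \emph{subset} $S_1\subset\Lambda_{q_1}$ by pigeonhole. But Proposition 6.1 is a statement about maps between \emph{groups} $G_1\to G_2$; the non-multiplicativity count $|\{(x,y):\psi(xy)=\psi(x)\psi(y)\}|$ and the conclusion ``agrees with a homomorphism on a density-$(1-\sqrt\varepsilon)$ subset'' make no sense if $\psi$ is only defined on $S_1$ (in particular $xy$ may lie outside $S_1$). The paper sidesteps this by first applying the bounded-generation result (Proposition 3.1) to $B$ itself, so that $B^{7200}$ contains the full congruence subgroup $G=\Lambda(q_1')/\Lambda(q_1)\times\Lambda(q_2')/\Lambda(q_2)\times\Lambda(q_3')/\Lambda(q_3)$ (formula (6.7)), and then defines $\psi:G\to B^{7200}$ with $\pi_{q_1,q_2,q_3}\circ\psi=\mathrm{id}$ on all of $G$. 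You invoke Proposition 3.1 later, inside the non-multiplicativity branch, but it needs to come \emph{before} the dichotomy is applied, or $\psi$ has the wrong domain.

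Second, you place the use of the non-concentration estimates (Propositions 4.1, 4.2) and the hypothesis $\delta<c_2\epsilon\theta/2$ in the non-multiplicativity case. In the paper's proof this is not where they are used. In the case $q_5'>q_5^{1/2}$ the element $\gamma_0=\psi(g_1)\psi(g_2)\psi(g_1g_2)^{-1}$ is conjugated against two elements $\gamma_1,\gamma_2\in B^{7200}$, which can be chosen with the required linear-independence mod each $p$ because $B^{7200}$ already contains a full congruence subgroup; no random-walk input and no use of $\delta<c_2\epsilon\theta/2$ is needed there, and the near-trivial-homomorphism case $q_6>(q_6'')^{1/2}$ is likewise handled purely by iterated commutators inside $B$. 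The expensive step — using Proposition 4.1/4.2 to locate $g_1,\dots,g_8\in A$ that conjugate the primitive Lie element $(\xi_1,\xi_2,\xi_3)$ into spanning directions, at the cost of the condition $\delta<c_2\epsilon\theta/2$ — occurs only in the third subcase (nontrivial lifted homomorphism, $q_6'>(q_5'')^{1/2}$), via Lemma 6.25 and the span estimate (6.27). If you were to insert Proposition 4.1/4.2 into the non-multiplicativity branch as you describe, you would be paying the $\delta$-cost twice, and the bookkeeping of exponents that leads to $q_4^*>q_4^{10^{-4}/4}$ and $3-900\theta^{1/4}$ would no longer match. The structure of the argument should be: cheap commutator manipulations inside $B$ in the first two subcases, and only invoke $A$-conjugation plus non-concentration in the third.
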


Write 
$$
\Lambda_{q_1q_4} \times \Lambda_{q_2} \times \Lambda_{q_3} \cong \Lambda_{q_1} \times \Lambda_{q_2} \times \Lambda_{q_3} \times \Lambda_{q_4} \times 1 \times 1
$$

Since $B$ satisfies (6.3) and (6.4), by Proposition 3.1, there exists $q_1^{\prime}\left|q_1, q_2^{\prime}\right| q_2, q_3^{\prime} \mid q_3, q_4^{\prime} \mid q_4,q_1^{\prime} q_2^{\prime}q_3^{\prime}<$ $\left(q_1 q_2q_3\right)^{96 \theta}, q_4^{\prime}<\left(q_4\right)^{96 \theta}$, such that
\begin{equation}
\Lambda\left(q_1^{\prime}\right) / \Lambda\left(q_1\right) \times \Lambda\left(q_2^{\prime}\right) / \Lambda\left(q_2\right) \times \Lambda\left(q_3^{\prime}\right) / \Lambda\left(q_3\right)\supset \pi_{q_1, q_2,q_3}\left(B^{7200}\right) .
\end{equation}
and
\begin{equation}
\Lambda\left(q_4^{\prime}\right) / \Lambda\left(q_4\right) \supset \pi_{q_4, 1,1}\left(B^{7200}\right) .
\end{equation}

Write
$$
G=\Lambda\left(q_1^{\prime}\right) / \Lambda\left(q_1\right) \times \Lambda\left(q_2^{\prime}\right) / \Lambda\left(q_2\right) \times \Lambda\left(q_3^{\prime}\right) / \Lambda\left(q_3\right)
$$

From (6.7), we can construct a map
$$
\psi: G \rightarrow B^{7200},
$$
such that
\begin{equation}
\pi_{q_1, q_2,q_3}(\psi(x))=x .
\end{equation}

From $q_4^{\prime}<q_4^{96\theta}$, there is $q_5 \| q_4, q_5>q_4^{1-\theta^{\frac{1}{2}}}$ such that for every $p^n \| q_5$, we have $p^{\left[96\theta^{\frac{1}{2}} n\right]} \nmid q_4^{\prime}$.\\
Let $q_5=\prod_{j \in J} p_j^{n_j}$. For each $p_j^{n_j} \| q_5$, we consider $\psi_j=\pi_{p_j^{n_j\theta^{\frac{1}{4}}}} \circ \mathbb{P}_1 \circ \psi$.\\
Let
\begin{equation}
\mathcal{G}_j=\left\{(x, y) \in G \times G \mid \psi_j(x y) \neq \psi_j(x) \psi_j(y)\right\}
\end{equation}

According to Proposition 6.1, there are two scenarios:\\
(1)
$$
\left|\mathcal{G}_j\right|>10^{-4}|G|^2 \text {. }
$$
(2) There is a subset $S_j \in G,\left|S_j\right| \geq \frac{99}{100}|G|$ such that $\psi_j=h_j$ over $S_j$ where $h_j$ is a homomorphism from $G$ to $\Gamma / \Gamma({p_j^{n_j\theta^{\frac{1}{4}}}})$.

Let $J=J_1 \sqcup J_2$ where $J_1, J_2$ is the collection of indices falling into Case (1) and Case (2), respectively. Write $q_5=q_5^{\prime} q_5^{\prime \prime}$, where
$$
q_5^{\prime}=\prod_{j \in J_1} p_j^{n_j}, q_5^{\prime \prime}=\prod_{j \in J_2} p_j^{n_j} .
$$

We further write $q_5^{\prime \prime}=q_6q_6^{\prime}$, where

\begin{equation}
q_6^{\prime}=\prod_{\substack{p_j^{n_j} \| q_4^{\prime \prime} \\ \pi_{p_j^[\frac{n_j\theta^{1/4}}{2}]}}\circ p_j^{n_j}=1} {p_j^{n_j}}
\end{equation}
and 
\begin{equation}
q_6^{\prime\prime}=\prod_{\substack{p_j^{n_j} \| q_4^{\prime \prime} \\ \pi_{p_j^[\frac{n_j\theta^{1/4}}{2}]}}\circ p_j^{n_j}\neq1} {p_j^{n_j}}
\end{equation}

There are three cases can be analysed.
\subsection{The case $q_5^{\prime}>q_5^{\frac{1}{2}}$}
In this case,
\begin{equation}
\sum_{j \in J_1}\left(\log p_j^{n_j}\right)\left|\mathcal{G}_j\right|>\log \left(\left(q_5^{\prime}\right)^{10^{-4}}\right)|G|^2
\end{equation}

The left side of the inequality 6.14 is equal to
\begin{equation}
\sum_{\substack{U \subset J_1 \\ U \neq \emptyset}} \log \left(\prod_{j \in U} p_j^{n_j}\right)\left|\cap_{j \in U} \mathcal{G}_j \bigcap \cap_{j \in J_1-U} \mathcal{G}_j^c\right|
\end{equation}

Since the number of subsets of $J_1$ is $<q_5^{0+}$, we have $J_1^{\prime} \subset J_1, \tilde{q}=\prod_{i \in J_1} p_i^{n_i}>\left(q_5^{\prime}\right)^{\frac{1}{2} 10^{-4}}$, such that
$$
\left|\cap_{i \in J_1} \mathcal{G}_i\right|>q^{0-}|G|^2 .
$$

Take any $\left(g_1, g_2\right) \in \cap_{i \in J_1} \mathcal{G}_i$, and consider $\gamma_0=\psi\left(g_1\right) \psi\left(g_2\right) \psi\left(g_1 g_2\right)^{-1}$. Then $\gamma_0$ satisfies,
$$
\pi_{q_1, q_2,q_3}\left(\gamma_0\right)=1
$$
and for any $p^n|| \tilde{q}$,
$$
\pi_{p^{\left[n \theta \frac{1}{4}\right]}} \circ \mathbb{P}_1\left(\gamma_0\right) \neq 1
$$
From this two formula, we can have a series of congruence,$X_i \in V=\operatorname{Lie}\left(\mathrm{SL}_2\right)(\mathbb{Z})$.
\[
\mathbb{P}_1\left(\gamma_0\right) \equiv X_i (\bmod p^{\left[n \theta \frac{1}{4}\right]})
\]
$X_i \in V=\operatorname{Lie}\left(\mathrm{SL}_2\right)(\mathbb{Z})$
Then by the Chinese reminder theorem, we can easily write the $\gamma_0$ into the following form.
$$
\gamma_0 \equiv 1+\left(\prod_{p \mid q_4^{\prime}} p^{m_p}\right) X\left(\bmod \prod_{p \mid q_4^{\prime}} p^{2 m_p}\right),
$$
for some primitive $X \in V=\operatorname{Lie}\left(\mathrm{SL}_2\right)(\mathbb{Z})$ (i.e., the gcd of entries of $X$ is 1 ) and $1 \leq m_p \leq$ $\left[n \theta^{\frac{1}{4}}\right]$.\\
To proceed, we have the following trivial lemma
\begin{lemma}
Given $q \in \mathbb{Z}_{+}$and $\vec{v}, \vec{w} \in V$ primitive. Suppose for any $p \mid q, \vec{v}$ and $\vec{w}$ are linearly independent mod $p$. Then
$$
[v, V]+[w, V] \supset 2 V(\bmod q)
$$    
\end{lemma}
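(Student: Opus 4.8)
The plan is to reduce the statement to the single-prime case $q = p$ and then to a direct computation inside $V = \mathrm{Lie}(\mathrm{SL}_2)(\mathbb{Z})$, the space of trace-zero $2\times 2$ integer matrices. First I would observe that, by the Chinese Remainder Theorem, the containment $[v,V] + [w,V] \supset 2V \pmod q$ holds modulo $q$ if and only if it holds modulo $p^{n}$ for every $p^{n}\|q$; and a standard lifting argument (Nakayama/Hensel-type, since everything is $\mathbb{Z}$-linear) shows that it suffices to treat $q = p$ a prime, i.e.\ to show that over $\mathbb{F}_p$ the subspace $[v,V]+[w,V]$ contains $2V$, which (for $p$ odd) is just all of $V(\mathbb{F}_p)$, and for $p = 2$ is the statement that $2V$ is hit, which is automatic once we track the computation over $\mathbb{Z}$ with controlled denominators. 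So the crux is: for $\bar v, \bar w \in V(\mathbb{F}_p)$ primitive and linearly independent, $[\bar v, V(\mathbb{F}_p)] + [\bar w, V(\mathbb{F}_p)] = V(\mathbb{F}_p)$.

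For that, I would use the representation-theoretic description of the adjoint action. The map $X \mapsto [\bar v, X]$ is the operator $\mathrm{ad}(\bar v)$ on the $3$-dimensional space $V(\mathbb{F}_p) \cong \mathfrak{sl}_2(\mathbb{F}_p)$. Since $\bar v$ is primitive and nonzero, generically $\mathrm{ad}(\bar v)$ has rank $2$ (its kernel is the line spanned by $\bar v$ when $\bar v$ is non-nilpotent, or still a line when $\bar v$ is nilpotent nonzero), so $[\bar v, V(\mathbb{F}_p)]$ is a $2$-dimensional subspace, namely $\bar v^{\perp}$ with respect to the Killing form (trace form) $\langle X, Y\rangle = \mathrm{Tr}(XY)$, because $\mathrm{Tr}([\bar v,X]\bar v) = \mathrm{Tr}(X[\bar v,\bar v]) = 0$ and dimension count forces equality. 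Similarly $[\bar w, V(\mathbb{F}_p)] = \bar w^{\perp}$. Then $[\bar v,V]+[\bar w,V] = \bar v^{\perp} + \bar w^{\perp}$, and this equals all of $V(\mathbb{F}_p)$ precisely when $\bar v^{\perp} \cap \bar w^{\perp}$ is $1$-dimensional, i.e.\ when $\{\bar v, \bar w\}$ spans a $2$-dimensional space — which is exactly the hypothesis of linear independence mod $p$. (One must also check the trace form is nondegenerate on $V(\mathbb{F}_p)$ for $p$ odd, which is classical; the factor $2$ in the statement absorbs the $p=2$ degeneracy.)

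The remaining work is bookkeeping: one assembles the $\mathbb{F}_p$-statements for all $p\mid q$ via CRT, and then lifts from $\mathbb{F}_p$ to $\mathbb{Z}/p^n\mathbb{Z}$ by noting that if $[v,V]+[w,V]$ surjects onto $2V \bmod p$ then, writing an arbitrary target $2Y$ and solving $[v,X_1]+[w,X_2] = 2Y$ successively modulo $p, p^2, \dots, p^n$ with the first-order solutions correcting the error terms, one obtains a solution mod $p^n$ — this works because the obstruction at each stage lives in $2V/pV$ and is killed by the base case. I expect the main (really the only) obstacle to be the characteristic-$2$ annoyance: the trace form $\mathrm{Tr}(XY)$ is degenerate on $\mathfrak{sl}_2(\mathbb{F}_2)$, so the clean ``$\perp$'' argument must be replaced either by an explicit $3\times 3$ matrix computation of $\mathrm{ad}(v)$ in a basis, or — cleaner — by first proving the exact identity $[v,V]+[w,V]\supset 2V$ over $\mathbb{Z}$ directly (e.g.\ writing $v,w$ in a normalized form and exhibiting explicit preimages of $2e$, $2f$, $2h$ with integer coefficients whose denominators only involve the determinant of the change-of-basis, which the linear-independence hypothesis controls), and deriving the mod-$q$ statement as a corollary. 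Everything else is routine linear algebra.
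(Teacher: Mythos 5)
The paper labels this a ``trivial lemma'' and supplies no proof, so there is nothing to compare against; I assess your sketch on its own. Your treatment of the odd primes is correct: for $p$ odd the trace form on $\mathfrak{sl}_2(\mathbb{F}_p)$ is nondegenerate, $[\bar v, V(\mathbb{F}_p)]=\bar v^{\perp}$ is two-dimensional for $\bar v\neq 0$, linear independence of $\bar v,\bar w$ yields $\bar v^{\perp}+\bar w^{\perp}=V(\mathbb{F}_p)$, and Nakayama lifts surjectivity of the $\mathbb{Z}$-linear map $(X_1,X_2)\mapsto[v,X_1]+[w,X_2]$ from $\mathbb{F}_p$ to $\mathbb{Z}/p^n\mathbb{Z}$. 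The gap is exactly where you flag it, at $p=2$, but your second proposed fix, namely establishing the identity $[v,V]+[w,V]\supset 2V$ \emph{over} $\mathbb{Z}$, is false. Let $e,f,h$ be the standard basis of $V=\mathfrak{sl}_2(\mathbb{Z})$ with $[e,f]=h$, $[h,e]=2e$, $[h,f]=-2f$, and take $v=e$, $w=e+3f$: both primitive, independent mod $2$. Then $[v,V]=\mathbb{Z}h+2\mathbb{Z}e$ and $[w,V]=\mathbb{Z}h+\mathbb{Z}(2e-6f)$, so $[v,V]+[w,V]=\mathbb{Z}h+2\mathbb{Z}e+6\mathbb{Z}f$, which does not contain $2f$. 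The lemma still holds modulo $2^n$ for this pair because $6\mathbb{Z}f+2^nV\ni 2f$; modulo $3$ the hypothesis itself fails, so there is no contradiction. The preimage of $2f$ over $\mathbb{Q}$ has a denominator $3$, coprime to the modulus, which is why the congruence statement survives while the integral one does not.

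What your first suggestion (explicit matrix computation) must become is a Smith-normal-form argument. Write $v=a_1e+b_1f+c_1h$, $w=a_2e+b_2f+c_2h$ and form the $3\times 6$ integer matrix $M=[\operatorname{ad}(v)\,|\,\operatorname{ad}(w)]$ in the basis $e,f,h$. Two of its three rows are entirely even (a consequence of $[h,e]=2e$, $[h,f]=-2f$), so every $2\times 2$ minor of $M$ is even and every $3\times 3$ minor is divisible by $4$. Primitivity of $v,w$ gives gcd of entries equal to $1$, and linear independence mod $2$ means one of $a_1b_2-a_2b_1$, $a_1c_2-a_2c_1$, $b_1c_2-b_2c_1$ is odd; a short check then pins the $2$-adic valuations of the gcds of $2\times 2$ and $3\times 3$ minors at exactly $1$ and $2$. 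Hence the $2$-primary part of $V/([v,V]+[w,V])$ is $(\mathbb{Z}/2)^2$, of exponent $2$, which is precisely the assertion $2V\subset[v,V]+[w,V]+2^nV$ for every $n$. That calculation, not the over-$\mathbb{Z}$ inclusion, is the missing content of your sketch.
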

We take two elements $\gamma_1, \gamma_2 \in B^{7200}$ so that
$$
\mathbb{P}_1\left(\gamma_0 \gamma_1 \gamma_0^{-1} \gamma_1^{-1}\right) \equiv 1+\tilde{q}^{\left\{2 \theta^{\frac{1}{4}}\right\}} Y_1\left(\bmod \tilde{q}^{\left\{4 \theta^{\frac{1}{4}}\right\}}\right)
$$
and
$$
\mathbb{P}_1\left(\gamma_0 \gamma_2 \gamma_0^{-1} \gamma_2^{-1}\right) \equiv 1+\tilde{q}^{\left\{2 \theta^{\frac{1}{4}}\right\}} Y_2\left(\bmod \tilde{q}^{\left\{4 \theta^{\frac{1}{4}}\right\}}\right)
$$
for some $Y_1, Y_2$ satisfying the hypothesis of $\vec{v}, \vec{w}$ in Lemma 6.16 with $q$ replaced by $\tilde{q}$. Importantly,
\begin{equation}
\pi_{q_1, q_2,q_3}\left(\gamma_0 \gamma_1 \gamma_0^{-1} \gamma_1^{-1}\right)=\pi_{q_1, q_2,q_3}\left(\gamma_0 \gamma_2 \gamma_0^{-1} \gamma_2^{-1}\right)=1 .
\end{equation}

We also take
$$
H_\rho=\left\{\gamma \in B^{7200}, \pi_{\tilde{q}[\rho]}(\gamma)=1\right\}
$$
for $\rho \in\left[\theta^{\frac{1}{4}}, \frac{1}{2}\right]$.
From the condition we can easily find
$$
\pi_{\tilde{q}^{(2 \rho\}}} \mathbb{P}_1\left(B^{7200}\right) \supset \Lambda\left(\tilde{q}^{\{\rho\}}\right) / \Lambda\left(\tilde{q}^{\{2 \rho\}}\right) .
$$
Considering $\rho=\frac{1}{2}$,$\gamma \in H_{\rho}$,$\gamma_0 \gamma_1 \gamma_0^{-1} \gamma_1^{-1}=\vec{v}$,$\gamma_0 \gamma_2 \gamma_0^{-1} \gamma_2^{-1}=\vec{w}$,$\gamma \in H_{\rho}$ means $\gamma =1+\tilde{q}^{\rho}X_{\gamma}$ where $X_{\gamma} \in \operatorname{Lie}\left(\mathrm{SL}_2\right)(\mathbb{Z})$.Following the calculation of lemma 6.16, we have
\[
\tilde{q}^{\rho+2\theta^{\frac{1}{4}}}\left((Y_1X_r-X_rY_1)+(Y_2X_r-X_rY_2)\right) 
\]
It is easy to find that $\tilde{q}^{\rho}(Y_1X_r-X_rY_1) \equiv Y_1^{\prime}(\bmod \tilde{q}^{4\rho_1^{\frac{1}{4}}})$,$\tilde{q}^{\rho}(Y_2X_r-X_rY_2) \equiv Y_2^{\prime}(\bmod \tilde{q}^{4\rho_1^{\frac{1}{4}}})$,$Y_1^{\prime},Y_2^{\prime}\in \operatorname{Lie}\left(\mathrm{SL}_2\right)(\mathbb{Z})$.So we can get the result that
\[
\left[\vec(v),H_{\rho}\right]+\left[\vec(w),H_{\rho}\right]\subset \Lambda\left(\tilde{q}^{\left\{6 \theta^{\frac{1}{4}}\right\}}\right) \subset \Lambda(\tilde{q}^{\rho}) / \Lambda(\tilde{q}^{2\rho})
\]
Since $\rho=\frac{1}{2}$,after $\frac{1}{6\theta^{\frac{1}{4}}}$ times calculation,the consequence we obtain is below.
\begin{equation}
\begin{aligned}
& \pi_{q_1, q_2,q_3}(F)=1, \\
& \pi_{\bar{q}, 1,1}(F) \supset \Lambda\left(\tilde{q}^{\left\{3 \theta^{\frac{1}{4}}\right\}}\right) / \Lambda(\tilde{q})
\end{aligned}
\end{equation}

In this case, we take $q_4^*=\tilde{q}$, so
$$
q_4^*>q_4^{\frac{1}{4} 10^{-4}}
$$
and it follows from $(6.8),(6.18)$ that
$$
\left|\pi_{q_1 q_4^*, q_2,q_3}\left(B^{\left[\frac{43200}{\theta^{1 / 4}}\right]}\right)\right|>\left(q_1 q_2 q_3 q_4^*\right)^{3-9 \theta^{\frac{1}{4}}} .
$$
\subsection{The case $q_5^{\prime \prime}>q_5^{\frac{1}{2}}, q_6>\left(q_6^{\prime \prime}\right)^{\frac{1}{2}}$}
The local homomorphisms $h_j, j \in J_2$ can be lifted to a homomorphism
$$
h: G \rightarrow \Lambda\left(\prod_{j \in J_2} p_j\right) / \Lambda\left(\left(q_4^{\prime \prime}\right)^{\left\{\theta^{\frac{1}{4}}\right\}}\right) .
$$

Following the previous reasoning for obtaining $J_1^{\prime}$, there is a set $J_2^{\prime} \subset J_2$, such that
$$
\begin{gathered}
\bar{q}=\prod_{i \in J_2^{\prime}} p_i^{n_i}>\left(q_4^{\prime \prime}\right)^{\frac{99}{200}}, \\
\left|\cap_{i \in J_2} S_i\right|>\left(q_1 q_2\right)^{-\theta}|G|,
\end{gathered}
$$
and $\psi \equiv h$ on $S=\cap_{i \in J_2} S_i$.
By Proposition 3.1, we have $S^{7200} \supset G^{\prime}=\Lambda\left(q_1^{\prime \prime}\right) / \Lambda\left(q_1\right) \times \Lambda\left(q_2^{\prime \prime}\right) / \Lambda\left(q_2\right)\times \Lambda\left(q_3^{\prime \prime}\right) / \Lambda\left(q_3\right)$ for some $q_1^{\prime}\left|q_1, q_2^{\prime}\right| q_2, q_3^{\prime} \mid q_3, q_4^{\prime} \mid q_4,q_1^{\prime} q_2^{\prime}q_3^{\prime}<$ $\left(q_1 q_2q_3\right)^{200\theta}, q_4^{\prime}<\left(q_4\right)^{200 \theta}$. This implies the existence of a subgroup $G^{\prime \prime}$ of $G^{\prime}$ of the form
\begin{equation}
G^{\prime \prime}=\Lambda\left(\tilde{q}_1\left(q_1^*\right)^{\left\{\theta^{\frac{1}{2}}\right\}}\right) / \Lambda\left(q_1\right) \times \Lambda\left(\tilde{q}_2\left(q_2^*\right)^{\left\{\theta^{\frac{1}{2}}\right\}}\right) / \Lambda\left(q_2\right) \times \Lambda\left(\tilde{q}_3\left(q_3^*\right)^{\left\{\theta^{\frac{1}{2}}\right\}}\right) / \Lambda\left(q_3\right)
\end{equation}
where
\begin{equation}
q_1=\tilde{q}_1 q_1^*, q_2=\tilde{q}_2 q_2^*, q_3=\tilde{q}_3 q_3^*, \tilde{q}_1, q_1^*\left\|q_1, \tilde{q}_2, q_2^*\right\| q_2, \tilde{q}_3, q_3^*\| q_3,\tilde{q}_1 \tilde{q}_2\tilde{q}_3<\left(q_1 q_2q_3\right)^{200 \theta^{\frac{1}{2}}} .
\end{equation}

Defining a map $\tilde{\psi}: G^{\prime \prime} \rightarrow B^{7200^2}$ , for any $x \in G^{\prime \prime}$, choose a word $s_1 s_2 \cdots s_{7200}$ and let
\begin{equation}
\tilde{\psi}(x)=\psi\left(s_1\right) \psi\left(s_2\right) \cdots \psi\left(s_{7200}\right) .
\end{equation}

It is easy to find $\pi_{\left(q_5^{\prime \prime}\right)^{\left\{\theta^{\frac{1}{4}}\right\}}} \mathbb{P}_1 \tilde{\psi}(x)=h(x)$.
Since $q_6>\left(q_5^{\prime \prime}\right)^{\frac{1}{2}}$, and the commutator of $\Gamma(p^m)/\Gamma(p^n)$ is $\Gamma(p^{2m})/\Gamma(p^n)$ we can easily get the following result by taking commutator of$\tilde{\psi}\left(G^{\prime \prime}\right)\left[\frac{2}{\theta^{1 / 4}}\right]$ times, 
\begin{equation}
\begin{aligned}
&\Lambda\left(\tilde{q}_1\left(q_1^*\right)^{\left\{2 \theta^{\frac{1}{4}}\right\}} q_5\right) / \Lambda\left(q_1 q_5\right) \times \Lambda\left(\tilde{q}_2\left(q_2^*\right)^{\left\{2 \theta^{\frac{1}{4}}\right\}}\right) / \Lambda\left(q_2\right) \times \Lambda\left(\tilde{q}_3\left(q_3^*\right)^{\left\{2 \theta^{\frac{1}{4}}\right\}}\right) / \Lambda\left(q_3\right)\\& \supset B^{7200^2 \cdot\left(3 \cdot 2^{\left[\theta^{-\frac{1}{4}}\right]}-2\right)}
\end{aligned}
\end{equation}

Now We consider $q_4^*=q_6$. We have
$$
q_4^*>q_4^{\frac{1}{8}}
$$

It follows from (6.20) and (6.22) that
\begin{equation}
\left|\pi_{q_1 q_4^*, q_2,q_3}\left(B^{3\cdot7200^2 \cdot 2^{\left[\theta^{-\frac{1}{4}}\right]}}\right)\right|>\left(q_1 q_2 q_3q_4^*\right)^{3-900 \theta^{\frac{1}{4}}} .
\end{equation}
\subsection{The case $q_5^{\prime \prime}>q_5^{\frac{1}{2}}, q_6^{\prime}>\left(q_5^{\prime \prime}\right)^{\frac{1}{2}}$}
By the lemma 6.21 present in \cite{tang2023super},which we will present below for reference
\begin{lemma}
Suppose $h$ is a homomorphism from $\Lambda\left(p_1^{m_1}\right) / \Lambda\left(p_1^{n_1}\right)$ to $\Lambda\left(p_2^{m_2}\right) / \Lambda\left(p_2^{n_2}\right)$ for some $0 \leq m_1 \leq n_1, 1 \leq m_2 \leq n_2$, and for some $\xi \in \Lambda\left(p_1^{m_1}\right) / \Lambda\left(p_1^{n_1}\right), h(\xi) \neq 1$. Then $p_1=p_2$ and $p^x|| \xi-1$ for some $x \leq m_1+n_2-m_2$.    
\end{lemma}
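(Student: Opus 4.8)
The plan is to study $h$ through the iterated Frattini subgroups of source and target, using the fact that for $\mathrm{SL}_2$ these coincide with the congruence filtration, so that $h$ is forced to shift the $p$-adic depth of an element by at least $m_2-m_1$.

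First I would settle $p_1=p_2$. If $v_{p_1}(\xi-1)\geq 1$ then $\xi$ lies in the finite $p_1$-group $\Lambda(p_1)/\Lambda(p_1^{n_1})$, so $\mathrm{ord}(\xi)$, hence $\mathrm{ord}(h(\xi))$, is a power of $p_1$; but $h(\xi)\neq 1$ lies in the $p_2$-group $\Lambda(p_2^{m_2})/\Lambda(p_2^{n_2})$, so its order is a positive power of $p_2$, forcing $p_1=p_2$. The only other possibility is $m_1=0$ and $\xi\not\equiv 1\pmod{p_1}$; then $h$, whose image is nilpotent, factors through the maximal nilpotent quotient of $\mathrm{SL}_2(\mathbb{Z}/p_1^{n_1}\mathbb{Z})$, which is trivial for $p_1\geq 5$ (that group is perfect, so $h=1$ and the statement is vacuous) and a $p_1$-group for $p_1\in\{2,3\}$ (the maximal nilpotent quotient of $\mathrm{SL}_2(\mathbb{F}_{p_1})$ is a $p_1$-group, and the kernel of reduction $\bmod\,p_1$ is a $p_1$-group), so $h(\xi)$ has $p_1$-power order and $p_1=p_2=:p$.

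For the depth bound, put $P=\Lambda(p^{m_1})/\Lambda(p^{n_1})$ and $Q=\Lambda(p^{m_2})/\Lambda(p^{n_2})$ and assume first $m_1\geq1$, so both are finite $p$-groups. The crux is the identity
\[
\Phi^{k}\!\left(\Lambda(p^{m})/\Lambda(p^{n})\right)=\Lambda(p^{m+k})/\Lambda(p^{n})\qquad(m\geq1,\ k\geq0),
\]
with the right side read as trivial once the exponent reaches $n$. It is enough to prove the case $k=1$ and iterate; since $\Phi(\cdot)=(\cdot)^{p}[\cdot,\cdot]$ and $[\Lambda(p^{m}),\Lambda(p^{m})]\subseteq\Lambda(p^{2m})\subseteq\Lambda(p^{m+1})$, this comes down to showing that $p$-th powers generate exactly $\Lambda(p^{m+1})/\Lambda(p^{n})$. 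The inclusion "$\subseteq$" is immediate from $(1+p^{m}X)^{p}\equiv 1+p^{m+1}X$ modulo deeper levels (for $m\geq1$ the correction $\tbinom{p}{2}p^{2m}X^{2}$, or $p^{2m}X^{2}=-p^{2m}\det(X)I$ when $p=2$, already lies in $\Lambda(p^{m+1})$); the reverse inclusion follows by descending induction on the congruence level, writing a level-$a$ element of $\Lambda(p^{m+1})$ as a product of $p$-th powers of level-$(a-1)$ elements up to a level-$(a{+}1)$ error. Granting this, one has $h(\Phi^{k}(P))=\Phi^{k}(h(P))\subseteq\Phi^{k}(Q)$, using that $\Phi$ commutes with surjections and is monotone under inclusion of subgroups of a finite $p$-group. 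Writing $x=v_p(\xi-1)\ (\geq m_1)$, we get $\xi\in\Lambda(p^{x})/\Lambda(p^{n_1})=\Phi^{x-m_1}(P)$, so $h(\xi)\in\Phi^{x-m_1}(Q)=\Lambda(p^{m_2+x-m_1})/\Lambda(p^{n_2})$; since $h(\xi)\neq1$ this subgroup is nontrivial, hence $m_2+x-m_1<n_2$, i.e. $x\leq m_1+n_2-m_2$. When $m_1=0$ and $x\geq1$ the same argument applied to the restriction of $h$ to $\Lambda(p)/\Lambda(p^{n_1})$ gives $x\leq n_2-m_2=m_1+n_2-m_2$, and when $m_1=x=0$ the bound is trivial.

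The technical heart — and the step where I expect to spend the most effort — is the identity $\Phi^{k}(\Lambda(p^{m})/\Lambda(p^{n}))=\Lambda(p^{m+k})/\Lambda(p^{n})$, i.e. that $p$-th powers fill up the next congruence layer, carried out uniformly in $p$; the primes $p=2,3$ force one to check that the lower-order terms of $(1+p^{m}X)^{p}$ and the (for $p=2$, smaller) commutator layer do not obstruct the descending induction, e.g. by noting that $\{X-\det(X)I:X\in\mathfrak{sl}_2(\mathbb{F}_2)\}$ spans $\mathfrak{sl}_2(\mathbb{F}_2)$. Conceptually this identity is the group-theoretic shadow of the statement that any Lie-algebra homomorphism $p^{m_1}\mathfrak{sl}_2(\mathbb{Z}_p)\to p^{m_2}\mathfrak{sl}_2(\mathbb{Z}_p)$ is, by simplicity of $\mathfrak{sl}_2$, either zero or a scaled isomorphism and hence shifts valuations by exactly $m_2-m_1$; but I would prefer the elementary Frattini route so as not to import the Lazard correspondence with its own restrictions at small primes.
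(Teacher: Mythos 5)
The paper does not actually prove this lemma: it is quoted verbatim from the reference \cite{tang2023super} (Lemma 6.21 there) and used as a black box, so there is no in-paper argument to compare against. I therefore assess your proof on its own merits, and I find it correct; in fact it yields a marginally stronger conclusion than stated, since $h(\xi)\in\Phi^{x-m_1}(Q)\subseteq\Lambda(p^{m_2+x-m_1})/\Lambda(p^{n_2})$ together with $h(\xi)\neq 1$ forces $m_2+x-m_1<n_2$, i.e.\ $x\leq m_1+n_2-m_2-1$.

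The two halves are both sound. For $p_1=p_2$: when $\xi\in\Lambda(p_1)$ this is an order argument, and when $m_1=0$ and $\xi\notin\Lambda(p_1)$ you correctly observe that the image of $h$ lies in a $p_2$-group (here $m_2\geq1$ is essential), hence is nilpotent, so $h$ factors through the maximal nilpotent quotient of $\mathrm{SL}_2(\mathbb{Z}/p_1^{n_1}\mathbb{Z})$; that quotient is trivial for $p_1\geq5$ (the group is perfect) and a $p_1$-group for $p_1\in\{2,3\}$ (it is an extension of a $p_1$-group quotient of $\mathrm{SL}_2(\mathbb{F}_{p_1})$ by the $p_1$-group $\Lambda(p_1)/\Lambda(p_1^{n_1})$). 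For the depth bound, note you only need one inclusion on each side of the Frattini identity: $\Lambda(p^{m_1+k})/\Lambda(p^{n_1})\subseteq\Phi^{k}(\Lambda(p^{m_1})/\Lambda(p^{n_1}))$ on the source, which is what your descending induction on levels establishes, and $\Phi^{k}(\Lambda(p^{m_2})/\Lambda(p^{n_2}))\subseteq\Lambda(p^{m_2+k})/\Lambda(p^{n_2})$ on the target, which is the easy direction since commutators land in $\Lambda(p^{2m})\subseteq\Lambda(p^{m+1})$ and $p$-th powers in $\Lambda(p^{m+1})$ for $m\geq1$. Combined with $\Phi(\cdot)=(\cdot)^p[\cdot,\cdot]$ being carried by any homomorphism and monotone under inclusion of subgroups, the argument goes through. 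The only genuinely exceptional step in the descending induction is $p=2$ at level $a=m+1=2$, where $(1+2X)^2=1+4(X+X^2)$ is not $\equiv1+4X\pmod8$; your observation that the images $X-\det(X)I\bmod2$ span $\mathfrak{sl}_2(\mathbb{F}_2)$ closes this (one checks $e,f,e+f+I$ all occur). Your parenthetical worry about $p=3$ is unnecessary, though: for $p=3$, $m\geq1$, the correction $\binom{3}{2}3^{2m}X^2$ already has valuation $1+2m\geq m+2$, so only $p=2,m=1$ needs separate treatment. I see no gaps.
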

Since $\operatorname{gcd}\left(q_1, q_4\right)=1$ ,$\pi_{\left(q_6^{\prime}\right)^{\left\{\theta^{\frac{1}{4}}\right\}}} \circ \mathbb{P}_1 \circ \tilde{\psi}$ is a homomorphism, and $\pi_{\left(q_6^{\prime}\right)^{\left\{\frac{\theta^{\frac{1}{4}}}{2}\right\}}} \circ \mathbb{P}_1 \circ \tilde{\psi}$ is nontrivial for each $p^n \| q_5^{\prime}$.Suppose $(q_1, q_4)\neq 1$ We can find that for each write $p^{t_1} \|\left(\mathbb{P}_1 \tilde{\psi}(g)-1\right)$ and $p^{t_2} \|\left(\mathbb{P}_2 \tilde{\psi}(g)-1\right)$. We have $\left[\frac{\theta^{\frac{1}{4}}}{2} n\right]<t_1<1$,fix $t_1=\left[\frac{2\theta^{\frac{1}{4}}}{3} n\right]$,then $\left[\frac{\theta^{\frac{1}{4}}}{6} n\right]<t_2<\left[\left(\frac{2 \theta^{\frac{1}{4}}}{3}+\theta^{1 / 2}\right) n\right]<\left[\left(\frac{3 \theta^{\frac{1}{4}}}{4}\right) n\right]$.
\begin{lemma}
There are $Q_1\left|\hat{Q_1}\right| q_1^* q_6^{\prime}, Q_2\left|\hat{Q_2}\right| q_2^*,Q_3\left|\hat{Q_3}\right| q_3^*q_6^{\prime}, \xi=\left(\xi_1, \xi_2,\xi_3\right) \in \operatorname{Lie}\left(S L_2\right)(\mathbb{Z}) \times \operatorname{Lie}\left(S L_2\right)(\mathbb{Z})\times \operatorname{Lie}\left(S L_2\right)(\mathbb{Z})$, $\xi_1, \xi_2,\xi_3$ are primitive, such that
$$
\pi_{\hat{Q_1},  \hat{Q_2}, \hat{Q_3}}\left[(1,1,1)+\mathbb{Z}\left(Q_1 \xi_1, Q_2 \xi_2, Q_3 \xi_3\right)\right] \subset \pi_{\hat{Q_1},  \hat{Q_2}, \hat{Q_3}}\left(B^{7200^2}\right),
$$
The values of $Q_1, Q_2,Q_3, \hat{Q_1},  \hat{Q_2}, \hat{Q_3}$ are specified as follows:\\
For each $p^n \| q_5^{\prime}$, write $p^{t_1}\left\|Q_1, p^{t_2}\| Q_2,p^{t_3}\right\| Q_3$, we have $t_3=\left[\frac{2 \theta^{\frac{1}{3}}}{3} n\right],t_2=\left[\frac{2 \theta^{\frac{1}{3}}}{3} n\right],\left[\frac{\theta^{\frac{1}{2}}}{6} n\right]<t_1<\left[\left(\frac{3 \theta^{\frac{1}{2}}}{4}\right) n\right]$, and $p^{2 t_1} \| \hat{Q_1}$ , $p^{\left[\frac{4 t_2}{3}\right]} \| \hat{Q_2}$ and $p^{\left[\frac{4 t_2}{3}\right]} \| \hat{Q_3}$.\\
For each $p^n \| q_2^*$ but $p \neq q_5^{\prime}$, we have $p^n \| Q_2, \hat{Q_2}$.\\
For each $p^n \| q_3^*$ but $p \neq q_5^{\prime}$, we have $p^n \| Q_3, \hat{Q_3}$.\\
For each $p^n \| q_1^*$, we have $p^n \| Q_1, \hat{Q_1}$.    
\end{lemma}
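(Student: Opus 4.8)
The plan is to manufacture the one-parameter family of the conclusion directly from the map $\tilde\psi\colon G''\to B^{7200^2}$ constructed above, exploiting that $\gcd(q_1,q_4)=1$ and that $h:=\pi_{(q_6')^{\{\theta^{1/4}\}}}\circ\mathbb P_1\circ\tilde\psi$ is a homomorphism. First I would upgrade $\tilde\psi$ to an honest homomorphism after a partial reduction: since $q_1$ is coprime to $q_6'\mid q_4$, the Chinese Remainder Theorem identifies $\mathbb P_1\tilde\psi(x)\bmod q_1(q_6')^{\{\theta^{1/4}\}}$ with the pair $(\mathbb P_1 x,\,h(x))$, and together with $\pi_{q_i}\mathbb P_i\tilde\psi(x)=\mathbb P_i x$ for $i=2,3$ (which follow from $\pi_{q_1,q_2,q_3}\circ\tilde\psi=\mathrm{id}$) this shows that
\[
\Phi:=\pi_{q_1(q_6')^{\{\theta^{1/4}\}},\,q_2,\,q_3}\circ\tilde\psi\colon\ G''\ \longrightarrow\ \Lambda_{q_1(q_6')^{\{\theta^{1/4}\}}}\times\Lambda_{q_2}\times\Lambda_{q_3}
\]
is a group homomorphism, and $\Phi(G'')\subset\pi_{q_1(q_6')^{\{\theta^{1/4}\}},q_2,q_3}(B^{7200^2})$ since $\tilde\psi(G'')\subset B^{7200^2}$.

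The point of this reduction is that powers collapse to a single element: for any $g\in G''$ and $k\in\mathbb Z$ we have $\Phi(g)^k=\Phi(g^k)\in\pi_\bullet(B^{7200^2})$, so it is enough to produce one $g\in G''$ whose components satisfy
\[
\mathbb P_i\Phi(g)\equiv 1+Q_i\xi_i\pmod{\hat Q_i},\qquad i=1,2,3,
\]
with $\xi_i$ primitive and $Q_i\mid\hat Q_i$ as prescribed; then $(1,1,1)+\mathbb Z(Q_1\xi_1,Q_2\xi_2,Q_3\xi_3)$ equals $\{\Phi(g^k)\bmod(\hat Q_1,\hat Q_2,\hat Q_3):k\in\mathbb Z\}\subset\pi_\bullet(B^{7200^2})$, using the identity $(1+Q\xi)^k\equiv 1+kQ\xi\pmod{Q^2}$ together with the fact that the listed exponents make $\hat Q_i\mid Q_i^2$.

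To find such a $g$, note that a homomorphism between finite groups of coprime orders is trivial (Lemma 6.21 gives this as well), so $h$ vanishes on the $q_1$-factor of $G''$ and we may take $g=(1,g_2,g_3)$ with $g_2,g_3$ supported on the primes of the exact divisor $q_5'$ appearing in the statement. On the complementary parts $\mathbb P_i\Phi(g)\equiv1$, so we put $Q_i=\hat Q_i=p^n$ there, which together with the contribution of the remaining primes gives $Q_1\mid\hat Q_1\mid q_1^*q_6'$, $Q_2\mid\hat Q_2\mid q_2^*$, $Q_3\mid\hat Q_3\mid q_3^*q_6'$. On a prime $p^n$ of that divisor write $\mathbb P_i\Phi(g)=1+\Delta_i$; applying Lemma 6.21 to $h$ restricted to the cyclic subgroups generated by $g_2$ and $g_3$ --- whose valuation shift, combined with the bound $\operatorname{ord}_p(\xi-1)\le m_1+n_2-m_2$ of that lemma, gives two-sided control --- one arranges $\operatorname{ord}_p\Delta_2=\operatorname{ord}_p\Delta_3=t_2$ with $t_1:=\operatorname{ord}_p\Delta_1$ inside the asserted window. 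Finally $\det(1+\Delta_i)=1$ forces $\operatorname{tr}\Delta_i$ to vanish to order $2\operatorname{ord}_p\Delta_i$, so $\Delta_i=p^{t_i}\xi_i^{(p)}$ with $\xi_i^{(p)}\in\operatorname{Lie}(\mathrm{SL}_2)$ nonzero modulo $p$; gluing the $\xi_i^{(p)}$ across primes by the Chinese Remainder Theorem and lifting to $\mathbb Z$ gives primitive $\xi_i\in\operatorname{Lie}(\mathrm{SL}_2)(\mathbb Z)$, and reading off $p^{t_i}\|Q_i$, $p^{2t_1}\|\hat Q_1$, $p^{[4t_2/3]}\|\hat Q_2,\hat Q_3$ completes the bookkeeping.

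The heart of the matter, and the step I expect to be the main obstacle, is this last one: producing a single $g$ that simultaneously witnesses the nontriviality of $h$ at every relevant prime, keeps $\operatorname{ord}_p\Delta_2$ and $\operatorname{ord}_p\Delta_3$ pinned at $t_2$, and confines $t_1$ to its narrow window --- that is, squeezing Lemma 6.21 from both sides, while also handling the degenerate primes where $h$ is trivial on a factor of $G''$ (which get absorbed into the full-exponent part) and verifying that the resulting $\xi_i$ are genuinely primitive. Once $g$ is fixed, the homomorphism property of $\Phi$ and the binomial identity make the passage to the whole one-parameter family automatic.
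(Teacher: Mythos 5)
Your proposal correctly identifies the mechanism behind the lemma. The composite $\Phi=\pi_{q_1(q_6')^{\{\theta^{1/4}\}},\,q_2,\,q_3}\circ\tilde\psi$ is indeed a group homomorphism (the $q_1,q_2,q_3$-part because $\pi_{q_1,q_2,q_3}\circ\tilde\psi=\mathrm{id}$, the $q_6'$-part because $\psi\equiv h$ on $S$, glued by CRT using $\gcd(q_1,q_6')=1$), and the observation that $\hat Q_i\mid Q_i^2$ so that $(1+Q_i\xi_i)^k\equiv1+kQ_i\xi_i\ (\mathrm{mod}\ \hat Q_i)$ is exactly what turns a single element $g$ with $\Phi(g)\equiv(1+Q_i\xi_i)_i$ into the full one-parameter family inside $\pi(B^{7200^2})$. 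This is the same route the paper takes (via Lemma 6.21 applied to $\pi_{(q_6')^{\{\theta^{1/4}\}}}\circ\mathbb P_1\circ\tilde\psi$), and you have made explicit the ``one element $\Rightarrow$ whole line'' step that the paper leaves implicit.

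The part you flag as ``the main obstacle'' — producing one $g$ whose valuations $t_1,t_2,t_3$ land simultaneously in the prescribed windows at every $p^n\|q_6'$ — is in fact the only substantive content, and it is not fully discharged in your sketch. Two points need to be nailed down. First, the lower bound on $t_1$ does not come from Lemma 6.21 (which only gives an upper bound $x\le m_1+n_2-m_2$); it comes from the nontriviality of $\pi_{(q_6')^{\{\theta^{1/4}/2\}}}\circ\mathbb P_1\circ\tilde\psi$ recorded just before the lemma, i.e.\ from the defining condition (6.12)–(6.13) that separated $q_6''$ from $q_6'$ — you should cite that explicitly rather than gesturing at ``squeezing Lemma 6.21 from both sides.'' Second, since the relevant primes of $q_6'$ must also divide $q_2$ or $q_3$ (Lemma 6.21 forces $p_1=p_2$, and $\gcd(q_1,q_4)=1$ removes the $q_1$-factor), the choice of $g=(1,g_2,g_3)$ has to specify $g_2,g_3$ prime by prime, with $\mathrm{ord}_p(g_2-1)$ pinned at $t_2$ and $\mathrm{ord}_p(g_3-1)$ at $t_3$, and then one applies Lemma 6.21 to the cyclic subgroups $\langle g_2\rangle,\langle g_3\rangle$ locally, gluing by CRT; your text says this in outline but the CRT gluing across primes of $q_6'$ and the ``dead'' primes of $q_1^*,q_2^*,q_3^*$ (where $\Phi(g)\equiv1$ and one simply sets $p^n\|Q_i,\hat Q_i$) should be written out, since that is exactly where the stated divisibilities $Q_1\mid\hat Q_1\mid q_1^*q_6'$, etc., come from. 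Once those two points are filled in, the argument is complete and is essentially identical to the paper's.
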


With Lemma 6.25 we have, which can create a one-parameter group, we follow the same procedures in \cite{tang2023super} of Section 5.5 and Section 5.6.The starting step is to apply Proposition to find elements from $A$ to conjugate $\left(\xi_1, \xi_2,\xi_3\right)$ to other directions. By considering eight primitive linear forms, we can produce $g_1, g_2, g_3, g_4, g_5,g_6,g_7,g_8 \in A$ such that the following holds:
$\overline{q_6} \| q_6$ and
\begin{equation}
\overline{q_6}>\left(q_6^{\prime}\right)^{\{1-\theta^{\frac{1}{2}}\}^8} >\left(q_6^{\prime}\right)^{1-8 \theta^{\frac{1}{2}}},
\end{equation}
such that
\begin{equation}  
\begin{aligned}  
&\left(\bar{q_6}\right)^{\{8 \theta^{\frac{1}{2}}}\} \pi_{\bar{q_6}}(V \times V\times V) \\& \subset \pi_{\bar{q}_5} \operatorname{Span}_{\mathbb{Z}}\left\{\xi, g_1 \xi g_1^{-1}, g_2 \xi g_2^{-1}, g_3 \xi g_3^{-1}, g_4 \xi g_4^{-1}, g_5 \xi g_5^{-1}, g_6 \xi g_6^{-1}, g_7 \xi g_7^{-1}, g_8 \xi g_8^{-1}\right\}  
\end{aligned}  
\end{equation}
as long as
\begin{equation}
\delta<\frac{c_2 \epsilon \theta}{2} .
\end{equation}

Let $\bar{Q}_1=\operatorname{gcd}\left(Q_1, \bar{q}_6\right), \bar{Q}_2=\operatorname{gcd}\left(Q_2, \bar{q}_6\right),\bar{Q}_3=\operatorname{gcd}\left(Q_3, \bar{q}_6\right)$. Take Lemma 6.25 and (6.27) into localized form and then generalized into $\bar{Q}_i,i=1,2,3$ we can easily determine the existence of $F_1 \subset\left\{B^{7200^2} \cup A\right\}^{10}$ such that
\begin{equation}
\pi_{\bar{Q}_1{ }^{\left\{\frac{4}{3}\right\}}, \bar{Q}_2^{\left\{\frac{4}{3}\right\}}, \bar{Q}_3^{\left\{\frac{4}{3}\right\}}\left[\left(1+{\overline{Q_1}}^{\left\{\frac{5}{4}\right\}} V\right),\left(1+{\overline{Q_2}}^{\left\{\frac{5}{4}\right\}} V\right),\left(1+{\overline{Q_3}}^{\left\{\frac{5}{4}\right\}} V\right)\right]} \subset F_1 
\end{equation}
and
\begin{equation}
\pi_{q_1^*, q_2^* / \overline{q_6},q_3^*}\left(F_1\right)=1 \text {. }
\end{equation}

Then taking commutator of the left hand side of (6.29) and taking further commutator iterative, we obtain a set $F_2 \subset\left\{B^{7200^2} \cup A\right\}^{\left[200 \cdot 8^{\theta^{-\frac{1}{4}}}\right]}$ such that
\begin{equation}
\Lambda\left({\overline{Q_1}}^{20}\right) / \Lambda\left(\bar{q}_6\right) \times \Lambda\left({\overline{Q_2}}^{20}\right) / \Lambda\left(\bar{q}_6\right) \times \Lambda\left({\overline{Q_3}}^{20}\right) / \Lambda\left(\bar{q}_6\right) \subset \pi_{\bar{q}_6}\left(F_2\right),
\end{equation}

and
\begin{equation}
\pi_{q_1^*, q_2^* / \bar{q}_6,q_3^*}\left(F_2\right)=1
\end{equation}

To solve this case, let $q_4^*=\bar{q}_6$, we have
\begin{equation}
q_1 q_2 q_3 q_4^*>\left(q_1^* q_2^* q_3^*q_6^{\prime}\right)^{1-8 \theta^{\frac{1}{2}}}>\left(q_1 q_2q_3\right)^{1-9 \theta^{\frac{1}{2}}} q_3^{\frac{1}{8}},
\end{equation}
which implies
\begin{equation}
q_4^*>q_4^{\frac{1}{8}} q^{-18 \theta^{\frac{1}{2}}}>q_3^{\frac{1}{16}} .
\end{equation}

if $q_1 q_2 q_3 / q_4^*>\left(q_1 q_2q_3\right)^{\theta^{\frac{1}{2}}}$, then
$$
\left|\pi_{q_1, q_2 / q_4^*,q_3}(B)\right|>\left(q_2 / q_4^*\right)^{3-3 \theta^{\frac{1}{2}}} .
$$
with (6.31) and (6.32), it's easy to have
\begin{equation}
\left|\pi_{q_1 q_4^*, q_2,q_3}\left(\left\{B^{7200^2} \cup A\right\}^{\left[200 \cdot 8^{\theta^{-\frac{1}{4}}}\right]}\right)\right|>\left(q_1 q_2q_3 q_4^*\right)^{3-60 \theta^{\frac{1}{4}}} .
\end{equation}

If $q_1 q_2 q_3 / q_4^*\leq\left(q_1 q_2\right)^{\theta^{\frac{1}{2}}}$, then $q_4^*>q_2^{1-\theta^{\frac{1}{2}}}$ and $q_1<q_2^{\theta^{\frac{1}{2}}}$. Then Proposition 6.4 is thus proved.\\

\section{Proof of Proposition 2.2}
Recall $q=q_s q_l$, where $q=\prod_{i \in I} p_i^{n_i}, q_s=\prod_{i \in I: n_i \leq L} p_i^{n_i}, q_l=\prod_{i \in I: n_i>L} p_i^{n_i}$ for some $L$ to be determined at (7.15). \\
We divide our proof into three cases.
\subsection{The case $q_l<q^{\frac{\epsilon}{2}}$} 
Take
\begin{equation}
\delta<\frac{\log \frac{1}{c_0}}{6}
\end{equation}
for $c_0=c_0(L)$ the implied constant from Theorem 1.2, so that
$$
\left|\pi_{q_s}^*\left(\chi_S^{(l)}\right)(x)-\frac{1}{\left|\Lambda_{q_s}\right|}\right|<\frac{1}{2}
$$
for any $x \in \Lambda_{q_s}$. Since $\pi_{q_s}^*\left(\chi_S^{(l)}\right)(A)>q^{-\delta}$, we have $|A|>q^{9-3 \delta}$ for $q$ sufficiently large. By taking
\begin{equation}
\delta=\min \left\{\frac{\log \frac{1}{c_0}}{6}, \frac{\epsilon}{3}\right\},
\end{equation}
we have $|A|>q^{1-\epsilon}$, so the assumption of Proposition 2.2  is void and Proposition 2.2 automatically holds.

\subsection{The case $q^{\frac{\epsilon}{2}}<q_l<q^{1-\frac{\epsilon}{2}}$}
We assume the conclusion of Proposition 2.2 fails, i.e.
\begin{equation}
\left|\pi_q(A \cdot A \cdot A)\right| \leq\left|\pi_q(A)\right|^{1+\delta} .
\end{equation}
and we will arrive at a contradiction if we take $\delta$ sufficiently small.
If
\begin{equation}
\delta \leq \frac{\log \frac{1}{c_0}}{6},
\end{equation}
we have
\begin{equation}
\left|\pi_{q_s}(A)\right|>q^{-\delta} q_s^9>q_s^{9-\frac{\delta}{\epsilon}},
\end{equation}

Here we can view $\frac{\delta}{\epsilon}$ measures the closeness of $\pi_{q_s}(A)$ to $\Lambda_{q_x}$.
With the proposition 5.1,we have some $q^{\prime} \| q_l, q^{\prime}>q_l^{\frac{c_1 c_3}{1024 L_1}}$, such that
$$
\left|\pi_{q^{\prime}} \circ \mathbb{P}_1\left(A^{C^{\prime}}\right)\right|>\left(q^{\prime}\right)^{3-\theta}
$$
where both $C^{\prime}$ and $\theta$ are functions of $\delta$ given by
\begin{equation}
C^{\prime}=C^{\prime}(\delta)=C_3 2^{C_2+1} 8^{\left[\frac{C_1^5 c_3^3}{c_{11} c_2 c_3^5} \frac{5}{z}\right]}
\end{equation}
\begin{equation}
\theta=\theta(\delta)=\frac{3 \times 10^{18} C_1^4 C_3^2}{c_1^9 c_2 c_3^5} \frac{\delta}{\epsilon} .
\end{equation}

We clearly have $\theta(\delta) \rightarrow 0$ as $\delta \rightarrow 0$.
Now we apply Proposition 6.2 with $B=A^{C^{\prime}}, q_1=q_2=q_3=q_s, q_4=q^{\prime}$ and $\theta$ given in (7.7), with the requirement that
\begin{equation}
\delta<\frac{c_1^9 c_2 c_3^5}{3 \times 10^{36} C_1^4 C_3^2} \epsilon,
\end{equation}
so that the assumption (6.4) in Proposition 7.6 is satisfied. We then obtain $q^{\prime \prime} \mid q^{\prime}, q^{\prime \prime}>$ $q^{\prime \frac{1}{4} 10^{-4}}$, and
\begin{equation}
\left|\pi_{q_s q^{\prime \prime}, q_s,q_s}\left(A^{C^{\prime} C^{\prime \prime}}\right)\right|>\left(q_s^3 q^{\prime \prime}\right)^{3-900 \theta^{\frac{1}{4}}}
\end{equation}
where
\begin{equation}
C^{\prime \prime}=\left[200 \cdot 8^{\theta^{-\frac{1}{2}}}\right]
\end{equation}

By (7.10), we have increased the modulus of the first component from $q_s$ to $q_s q^{\prime \prime}$, at the cost of a density loss from $3-\theta$ to $3-900 \theta^{\frac{1}{4}}$.

If $q_s q^{\prime \prime}<q^{1-\frac{\varepsilon}{2}}$, we apply Proposition 5.6 with modulus $q_l$ replaced by $\frac{q}{q_s q^{\prime \prime}}$, then we apply Proposition 6.4 to increase the modulus to a larger one. Applying Proposition 5.1 and Proposition 6.4 repeatably until we reach a modulus $q_1^{\star}|| q, q_1^{\star}>q^{1-\frac{\varepsilon}{2}}$. Next, we go through
the same procedure to increase the modulus of the second component to $q_2^{\star} \| q, q_2^{\star}>q^{1-\frac{\epsilon}{2}}$.Finally follow the same steps to increase the modulus of the third component to $q_3^{\star} \| q, q_3^{\star}>q^{1-\frac{\epsilon}{2}}$. In total it takes at most $\left[\frac{10^{12} C_1}{c_1 c_3 \epsilon}\right]$ steps. In the end, recall also $\pi_q(A)<q^{9-9\epsilon}$, so
\begin{equation}
\left|\pi_q\left(A^{C^{\prime}\left(C^{\prime \prime}\right)}{ }^{\left[\frac{10^{12} C_1}{c_1 c_3 \epsilon}\right]}\right)\right|>|A|^{1+\frac{\epsilon}{3}}
\end{equation}
if
\begin{equation}
3\left(900 \theta^{\frac{1}{4}}\right)^{\left[\frac{10^{12} C_1}{c_1 c^\epsilon}\right]}<\epsilon
\end{equation}

Then (7.12) and (5.2) imply

We take $\delta_0$ sufficiently small so that $\theta=\theta\left(\delta_0\right)$ satisfies (7.13). Then we take
\begin{equation}
L=\left[\frac{20}{\delta_0}\right]
\end{equation}
Consider $C^{\prime}=C^{\prime}\left(\delta_0\right)$ given at (7.6) and $C^{\prime \prime}=C^{\prime \prime}\left(\theta\left(\delta_0\right)\right)$given at (7.10). Finally, we set
$$
\delta=\min \left\{\delta_0, \frac{\log \frac{1}{c_0}}{6}, \frac{\epsilon}{3 C^{\prime}}\left(C^{\prime \prime}\right)^{-\left[\frac{10^{12} C_1}{c_1 c_3{ }^e}\right]}\right\}
$$

Then we have contradiction.That's prove the correctness of Proposition 2.2.
\subsection{The case $q_l>q^{1-\frac{\epsilon}{2}}$}
In this case the modulus $q_s$ can be ignored. The Proposition 5.1 can be used to create a product set $B$ of $A$ such that $\pi_{q_1} \mathbb{P}_1(B)$ and $\pi_{q_2} \mathbb{P}_1(B)$ and $\pi_{q_3} \mathbb{P}_1(B)$ are large, where $q_1, q_2,q_3 \| q_l, \operatorname{gcd}\left(q_1, q_2\right)=1$.By the assumption, the analysis is virtually identical to the previous case.
Proposition 2.2 is thus fully proved.

\printbibliography

\end{document}